\newtheorem{theorem}{Theorem}[section]
\newtheorem{lemma}{Lemma}[section]
\newtheorem{proposition}{Proposition}[section]
\newtheorem{corollary}{Corollary}[section]
\begin{document}
\title{Infinitely dimensional, affine nil algebras $A\otimes A^{op}$ and $A\otimes A$ exist}
\author{Agata Smoktunowicz\thanks{ School of Mathematics, University of Edinburgh,  A.Smoktunowicz@ed.ac.uk} 
}

\pagestyle{myheadings}

\maketitle

\begin{abstract} In this paper we answer two questions from \cite{p}, by showing that, over any algebraically closed field, $K$, there is a finitely generated, infinitely dimensional algebra $A$ such that algebras  $A\otimes _{K}A$ and $A\otimes _{K} A^{op}$  are nil.
\end{abstract}

 
\section{Introduction}

 Several authors have studied the properties of tensor products of algebras over a field; for example,  in \cite{bokut}, \cite{bergman}, \cite{ferrero}, \cite{jespers}, \cite{krempa}, \cite{lawrenc},\cite{niholson} and \cite{rescosmall}. 
  In 1993, the question was posed  in \cite{p}  as to whether every affine algebra  $A$, such that $A\otimes A$  is nil,  is locally nilpotent. In \cite{p2}  it was shown  that this is indeed the case for algebras over ordered fields; however, the general case remained open.  In \cite{clp}, a related question  was asked, namely whether a Jacobson radical of algebras $A\otimes A$ and $A\otimes A^{op}$ is always locally nilpotent (recall that a nil ring is always Jacobson radical).  The aim of this paper is to show that in general these two statements need not hold; in other words, that the following holds.
\begin{theorem}\label{1}
 Over every algebraically closed field there is a finitely generated graded $K$-algebra $R$ such that $R\otimes _{K} R$ and $R\otimes _{K} R^{op}$ are nil algebras and  $R$ is an infinitely dimensional algebra.
\end{theorem}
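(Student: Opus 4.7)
The plan is to construct $R$ as a graded quotient $A/I$ of a finitely generated free algebra $A = K\langle x_1,\dots,x_d\rangle$ by a homogeneous two-sided ideal $I$, built by an infinite staged process. Since $R$ will be graded with finite-dimensional homogeneous components, the tensor products $R\otimes_K R$ and $R\otimes_K R^{op}$ inherit gradings with finite-dimensional pieces, so the union of all their homogeneous elements forms a countable set that I enumerate as $\xi_1,\xi_2,\dots$. The ideal is then constructed as an increasing union $I = \bigcup_m I_m$ of homogeneous ideals, with the $m$-th stage arranged so that $\xi_m$ becomes nilpotent in the appropriate tensor product.

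At stage $m$ I would fix $\xi_m = \sum_i a_i \otimes b_i$ and choose an integer $N_m$, then append to $I_{m-1}$ just enough new homogeneous relations so that every monomial appearing in $\xi_m^{N_m}$ — that is, each $a_{i_1}\cdots a_{i_{N_m}} \otimes b_{i_1}\cdots b_{i_{N_m}}$ in $R\otimes R$, and correspondingly $a_{i_1}\cdots a_{i_{N_m}} \otimes b_{i_{N_m}}\cdots b_{i_1}$ in $R\otimes R^{op}$ — becomes zero in the quotient. The new relations would lie in degrees strictly above a threshold $D_m$ prescribed in advance, so that only high-degree components are affected and the enumeration $\xi_1,\xi_2,\dots$ remains coherent with the evolving quotient. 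Algebraic closedness of $K$ enters through linear-algebra arguments on finite-dimensional homogeneous pieces, where eigenvalue decompositions of the multiplication maps $R_p \otimes R_q \to R_{p+q}$ produce the specific relations that annihilate the required products without collapsing the ambient space.

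The key quantitative estimate is a uniform lower bound on $\dim_K (A/I)_n$ showing it stays strictly positive for all $n$, from which infinite-dimensionality of $R$ follows at once. This would be achieved by a Golod--Shafarevich-type inequality that compares the number of relations in each degree against the exponential growth $d^n$ of the free algebra; the thresholds $D_m$ must be chosen to grow fast enough that the total number of relations contributed across all stages to each fixed degree $n$ remains a small fraction of $d^n$.

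I expect the dimension bookkeeping to be the main obstacle. A single relation of degree $r$ placed in $I$ forces infinitely many consequent zero relations in higher degrees via left and right multiplication, and these propagate into $R\otimes R$ and $R\otimes R^{op}$ in ways that are not easy to decouple; worse, the same batch of relations must simultaneously control both tensor products, which behave differently with respect to the order of multiplication. The technical heart of the proof will therefore be a carefully quantified growth estimate, designed in parallel with the inductive choice of generators for $I_m$, which guarantees that some homogeneous component of each degree survives the entire process while every enumerated tensor eventually becomes nilpotent.
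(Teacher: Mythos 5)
There is a genuine gap at the very centre of your plan. The relations you are allowed to impose live in an ideal $I$ of $A$, not in an ideal of $A\otimes A$: an element $\xi^{N}$ dies in $(A/I)\otimes(A/I)$ only if $\xi^{N}\in I\otimes A+A\otimes I$. Your proposal to make ``every monomial appearing in $\xi_m^{N_m}$'' vanish means that for each of the $k^{N_m}$ pure tensors $a_{i_1}\cdots a_{i_{N_m}}\otimes b_{i_1}\cdots b_{i_{N_m}}$ (where $\xi_m=\sum_{i=1}^{k}a_i\otimes b_i$) you must put the left or the right factor into $I$. For a typical $\xi_m$ supported in $\sum_{i,j\le t}A(i)\otimes A(j)$ one has $k$ of order $4^{t}$, the products on either side span essentially the whole homogeneous component of degree about $N_m t$, and the count $k^{N_m}$ of needed relations in degree about $N_m t$ destroys any Golod--Shafarevich estimate (which tolerates only $\approx\lambda^{l}$ relations in degree $l$ with $\lambda<2$); in fact you would be forced to kill entire components $A(n)$, making $R$ nilpotent. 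The whole point of the paper is to avoid term-by-term annihilation: Lemmas \ref{left1}--\ref{G} and \ref{E}--\ref{finalop} show that $f^{m}$ (and $\xi(f^m)$, for the opposite side) lies in right ideals generated by \emph{small, structured} homogeneous spaces $T(f,n)$, $G_{1},G_{2}$, $F(f,n_1,n_2,n_3)$ of controlled dimension, and then the Nullstellensatz step (Lemma \ref{Hilbert}, using algebraic closedness in an essential way) produces a single codimension-one subspace $U_i\subseteq A(a_i)$ with $G_i\subseteq U_i\otimes A(a_i)+A(a_i)\otimes U_i$, from which the ideal $I[D]$ is built so that $N\subseteq I[D]\otimes A+A\otimes I[D]$. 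Your ``eigenvalue decompositions of multiplication maps'' does not supply this step, and since the statement is actually false over ordered fields (by \cite{p2}), any argument that uses the field only through soft linear algebra cannot work.

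Two further points would also sink the plan as written. First, enumerating only homogeneous elements of $R\otimes R$ and $R\otimes R^{op}$ and making them nilpotent proves only graded nility, which is strictly weaker than nility; the paper must (and does) treat arbitrary elements $f\in\sum_{1\le i,j\le t}A(i)\otimes A(j)$, which is exactly why the decompositions $f=\sum f_{i,j}$ and the bookkeeping of left and right degrees appear. Second, over an uncountable algebraically closed field (e.g.\ $\mathbb{C}$) the set of homogeneous elements is uncountable, so your enumeration does not exist; the paper handles this with a separate argument replacing the enumeration by finite spanning sets ${\bar H}_n$ whose powers span the powers of whole finite-dimensional families at once. The quantitative Golod--Shafarevich bookkeeping you anticipate is indeed present in the paper (Section \ref{Golod}), but it only becomes usable after the reduction of each $f^{m}$ to the small spaces above, which is the idea missing from your proposal.
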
 

 The properties of algebras $A\otimes A^{op}$ may be pertinent to the study of $A$-bimodules. Recall that $A$-bimodules correspond to $A^{e}$-modules, where $A^{e}=A\otimes A^{op}$.  Determining the Jacobson radical or nil radical of algebra $A\otimes A^{op}$ may be useful in the context of the Nakayama lemma \cite{lam}.

 Quite often, the structure of the tensor product of an algebra with itself or with its opposite provides information about the structure of the algebra. For example, if $A$ is a finite dimensional $F$-algebra such that $A \otimes A^{op}$ is a matrix $F$-algebra, then $A$ is central simple. In this case, if $A \otimes A$ is a matrix $F$-algebra then $A$ is a central simple of exponent $2.$ 
Several new directions have recently developed within  the area of infinitely dimensional Brauer Groups. The methods developed in this paper might be related to these results, as in many cases they permit the construction of infinite dimensional 
graded algebras $A$ such that $A\otimes A^{op}$  
 satisfy a restricted number of  prescribed relations (however, in general these constructed  algebras are not simple).  
 Another application could be to construct various examples of nilpotent algebras; for some open questions in this area and connections with group theory, see \cite{kazarin}.  Another application might  be to study the properties of polynomial rings which are clean, using the results of Kanwar, Leroy and Matczuk \cite{lm}, or to obtain examples of rings with clean tensor product other than those in \cite{glasgow}.
  
The construction from this paper can also be applied to some other situations where we need to construct algebras $A$ such that $A\otimes A$ and $A\otimes A^{op}$ satisfy prescribed relations; for example, it could be applied to study local rings, or rings with  local tensor products $A\otimes A$ and $A\otimes A^{op}$ (for some related results, see \cite{lawrence}). It may also be useful for constructing examples of bialgebras or Hopf algebras satisfying some properties, for example  of infinite dimensional not pointed, nil Hopf algebras; however, this seems complex. 

\section{ Notation and plan }
 Let $K$ be a field, and let $A$ be a free algebra over $K$ with two generators $x,y$ of degree one.  Obviously, the monomials of the form $x_{i_1}x_{i_2}\dots x_{i_n}$ 
where $x_{i_{1}}, \ldots , x_{i_{n}}\in \{x,y\}$  form a $K$-basis of $A$.
By $A^1$ we denote the algebra obtained from $A$ by the adjunction of a unity. If an element $a\in A$ is a sum  of monomials of the same degree multiplied by coefficients, then by $deg(a)$ we mean the number expressing the common degree of the monomials.
 For a positive integer $n$ and a subset $S$ of $A$ by $S(n)$ we denote the set of all elements of $S$
  which are sums of monomials of length equal to $n$ multiplied by elements from $K$. So $A=A(1)+A(2)+\ldots .$ By a slight abuse of  notation,  $A(0)$ will denote $K$ (in fact it means $K\cdot 1_{A^{1}}$). Finally, by $\mathcal{M}$ we denote the set of all monomials of $A$.
 We put $A(\alpha )=0$ if $\alpha <0$. An ideal in $A$ will mean a two-sided ideal in $A$.
   Let $S$ be a linear space over a field $K$, then $\dim _{K} S$ will denote the dimension of $S$. 
 We say that element $a\in A\otimes A$ is homogeneous if $a\in A(m)\otimes A(n)$ for some $m, n$.
  We will then  say that the left degree of $a$ is $m$, denoted $l(a)=m$, and the right degree of $a$ is $n$, denoted as  $r(a)=n$. 
 A linear subspace of $A\otimes A$ is homogeneous if it is generated by homogeneous elements. 

 We will use the following notation throughout this paper.

Let $1\leq \alpha $, $0< n_{1}<n_{2}$ be natural numbers and let $F\subseteq A(n_{1})\otimes A(\alpha ),$ $G\subseteq A(\alpha ) \otimes A(n_{1})$. We define   
 \[Q(F, n_{1}, n_{2})=\sum_{k=0}^{\infty }\sum_{j\in W(n_{1}, n_{2})}((A(k\cdot n_{2}-n_{1})+A(k\cdot n_{2}))\otimes A(j))F(A\otimes A),\] 
 \[Q'(G, n_{1}, n_{2})=\sum_{k=0}^{\infty }\sum_{j\in W(n_{1}, n_{2})}(A(j)\otimes (A(k\cdot n_{2})+A(k\cdot n_{2}-n_{1}))G(A\otimes A),\] 
where 
$j\in W(n_{1}, n_{2})$ if and only if  the interval $[j,j+\alpha ]$ is disjoint with all intervals $[kn_{2}-n_{1}, kn_{2}+n_{1}]$ for $k=0, 1,2, \ldots $.
   
$ $

{\bf An outline of the proof now follows.}
\begin{itemize}
\item In Theorem \ref{S} we will find a nil ideal $N$ in $A\otimes A$ and appropriate linear subspaces $S_{i}\subseteq A\otimes A$ and  such that   $N\subseteq \sum_{i=1}^{\infty} S_{i}$.
 The proof of Theorem \ref{S} will be presented  in sections \ref{appendix1}-\ref{appendix4} in the Appendix at the end of this paper.
\item In Theorem \ref{important3} we find  linear spaces $U_{i}\subseteq A(a_{i})$  such that roughly speaking $S_{i}\subseteq A\otimes U_{i} +U_{i}\otimes A$ for $i=1,2, \ldots $.  In Section \ref{lm} linear mappings are introduced, which are important for the construction of sets $U_{i}$.  
\item In Sections \ref{Golod} and  \ref{Hil}  the Golod-Shafarevich Construction and Hilbert Nullstellensatz Theorem are used, to assure that  the codimesion of $U_{i}$ in $A(a_{i})$ is $1$. 
\item In Corollary \ref{N}, using sets $U_{i}$, special linear space $D$ will be constructed such that $N\subseteq D\otimes A+A\otimes D.$
\item In Section \ref{ideal}  we define $I[D]=\{r\in D: Ar\subseteq D\}$. Here it is shown that 
 $I[D]$ is an ideal in $A$. 
\item In Section \ref{final}  it is shown that $A/I[D]\otimes (A/I[D])^{op}$ and $A/I[D]\otimes (A/I[D])$  are nil algebras.

 \end{itemize}
\section{ Embedding  nil ideals in linear spaces}\label{malta}
  The aim of this section is to demonstrate that some  nil ideals in $A\otimes A$ are contained in 
 the sum of appropriate linear spaces $S_{i}.$

Let $x_{i}\in \{x,y\}$ be generators of $A$, then as usual we write $(x_{1}x_{2}\cdots x_{n})^{op}=x_{n}x_{n-1}\cdots x_{1}$.
Let $a_{i}, b_{i}\in A$. For $r=\sum_{i}a_{i}\otimes b_{i}$ define $\xi(r)=\sum_{i}a_{i}\otimes (b_{i})^{op}$.

\begin{theorem}\label{S}  Let $K$ be a field and let $A$ be a $K$-algebra generated in degree one by two elements. 
Let $0<a_{1}< a_{2}< \ldots $ be  integers such that $a_{i}$ divides $a_{i+1}$,  $a_{i+1}>100ia_{i}$, for every $i=1,2, \ldots $, and $a_{1}>100$.
  Then, for all $i\geq 1$  there exist homogeneous linear spaces $G_{i}\subseteq  A(a_{i})\otimes A(a_{i}),$  
 $F _{i}\subseteq A(a_{i})\otimes \sum_{j>{a_{i}\over 10i}}^{10ia_{i}}A(j),$ $ F'_{i}\subseteq   \sum_{j>{a_{i}\over 10i}}^{10ia_{i}}A(j)\otimes A(a_{i})$ with $\dim _{K} G_{i}, \dim _{K} F_{i}, \dim _{K}F'_{i} < 2^{500ia_{i-1}}a_{i+2}^{100i},$ also  
  $G_{1}=0, G_{2}=0,$ $F_{1}=0,$ $F'_{1}=0.$
 Moreover, there exist an ideal $N$ in $A\otimes A$ such that $N\subseteq \sum_{n=1}^{\infty }S_{n}$ where $S_{n}$ is a linear space such that   $S_{n}= \sum _{n=1}^{\infty }M_{n}+Q_{n }+Q_{n-1}$ and  where 
\[M_{n}=\sum_{0\leq k,k'}(A(k a_{n+1})\otimes (A(k' a_{n+1})+A(k' a_{n+1}-a_{n})))G_{n}(A\otimes A)\] 
\[Q_{n}=Q(F_{n}, a_{n}, a_{n+1})+Q'(F'_{n}, a_{n}, a_{n+1}), Q_{0}=0.\]
  Moreover, for every $f\in A\otimes A$ there is $n=n(f)$ such that $f^{n}, \xi (f^{n})\in N$. 
\end{theorem}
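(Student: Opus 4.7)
The plan is to build $N$ together with the spaces $G_n, F_n, F'_n$ simultaneously by induction, driven by a fixed enumeration $f_1, f_2, \ldots$ of a bihomogeneous $K$-basis of $A \otimes A$. At stage $n$, assuming all earlier data has been fixed, I would pick an exponent $k_n$ so that both bidegree components of $f_n^{k_n}$ are divisible by $a_{n+1}$, and then exhibit both $f_n^{k_n}$ and $\xi(f_n^{k_n})$ as elements of $M_n + Q_n + Q_{n-1}$ modulo $\sum_{m<n} S_m$, adding finitely many new vectors to $G_n, F_n, F'_n$ to make this representation possible. The reduction from an arbitrary $f \in A\otimes A$ to the bihomogeneous case uses that both $N$ and $\sum_n S_n$ are bihomogeneous: an arbitrary $f$ has finitely many bihomogeneous components, and a sufficiently high joint power lands in bidegrees where the structural coverage is already in force.

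The structural heart of the argument should be a decomposition lemma: any bihomogeneous $g \in A(u) \otimes A(v)$ with $u, v$ large relative to $a_n$ can be rewritten, modulo $\sum_{m<n} S_m$, as a sum of two types of terms --- those whose bidegree lies on the coarse $a_{n+1}$-grid (possibly after an $a_n$-shift on one factor), which are absorbable into $M_n$ through $G_n$; and those whose bidegree has a coordinate in the medium window captured by $W(a_n, a_{n+1})$, which are absorbable into $Q_n$ or $Q'_n$ through $F_n$ or $F'_n$. The divisibility $a_n \mid a_{n+1}$ and the gap condition $a_{n+1} > 100n\,a_n$ together guarantee both that this dichotomy is exhaustive and that $W(a_n, a_{n+1})$ contains ample slots to accommodate the required new generators.

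The dimension bookkeeping, which I expect to be the bulk of the technical effort, is an induction tracking the bounds $\dim G_i, \dim F_i, \dim F'_i < 2^{500i\,a_{i-1}} a_{i+2}^{100i}$. At each stage only polynomially many (in $a_{i+2}$) new generators are genuinely introduced to handle $f_i^{k_i}$ and $\xi(f_i^{k_i})$, and these are charged to the polynomial factor $a_{i+2}^{100i}$. The exponential factor $2^{500i\,a_{i-1}}$ absorbs carry-over produced when closing $N$ under the $(A \otimes A)$-bimodule action: closure introduces translates of earlier-stage generators that must be rewritten at the current level, but because the ratio $a_i/a_{i-1}$ is so large the propagated data stays tame and fits within the exponential budget.

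The main obstacle, and the reason the proof is spread across four appendix sections, is making the competing constraints --- ideal property of $N$, structural containment $N \subseteq \sum_n S_n$, the specific dimension bounds, and simultaneous capture of $f^n$ together with $\xi(f^n)$ --- fit together coherently. Each generator added to $G_n$ or $F_n$ drags into $N$ a whole orbit under left and right multiplication, and that orbit must land inside some $S_m$ without forcing still further uncontrolled generators. The set $W(a_n, a_{n+1})$ is designed precisely to pre-reserve graded slots in which such orbit translates can be parked, and the twofold $Q, Q'$ structure accommodates the failure of $\xi$ to be an algebra endomorphism of $A \otimes A$. Verifying that all of these requirements can be met simultaneously at every stage, while honouring the stated dimension budgets, is what occupies the extensive appendix.
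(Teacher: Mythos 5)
There is a genuine gap, in fact two. First, your reduction of the nilness claim to a bihomogeneous basis is invalid. The theorem demands that for \emph{every} $f\in A\otimes A$ some power $f^{n}$ and $\xi(f^{n})$ lie in $N$; arranging this only for members of a bihomogeneous basis gives nothing for a sum $g+h$ of components of different bidegrees, since $(g+h)^{m}$ is a sum of mixed words that need not lie in the ideal generated by powers of $g$ and of $h$ (this is exactly the K\"othe-type difficulty the whole construction is designed to circumvent). Your suggested fix, that ``a sufficiently high joint power lands in bidegrees where the structural coverage is already in force,'' is not an argument: membership in $\sum_{n}S_{n}$ is not automatic for elements of a given large bidegree, because in each bidegree $(u,v)$ the component of $S_{n}$ has dimension of order $\dim_{K}G_{n}\cdot 2^{u+v-2a_{n}}$ (and similarly for the $Q$-parts), far smaller than $\dim_{K}A(u)\otimes A(v)=2^{u+v}$. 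The paper instead places a designated power of \emph{every} element among the generators of $N$: when $K$ is countable it enumerates all of $A\otimes A$ as $f_{1},f_{2},\dots$ and takes the ideal generated by $f_{i}^{20ia_{i+1}}$ and $\xi(f_{i}^{20ia_{i+1}})$; when $K$ is uncountable it observes that $\{f^{20na_{n+1}}:f\in KH_{n}\}$ spans a space of dimension less than $(20na_{n+1})^{n}$, so finitely many representatives ${\bar H}_{n}$ suffice. Your basis enumeration sidesteps neither issue.

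Second, the structural heart of your plan is unsupported and, as stated, false. The asserted ``decomposition lemma'' --- that any bihomogeneous $g$ of large bidegree can be rewritten, modulo $\sum_{m<n}S_{m}$, into $M_{n}$-type and $Q_{n}$-type terms --- fails for dimension reasons as above; what is true (and is the actual content of the appendix) is that every element of the two-sided ideal generated by the designated power of $f_{n}$ admits such a rewriting, with $G_{n},F_{n},F'_{n}$ built \emph{canonically from $f_{n}$ in advance}, not enlarged a posteriori ``to make the representation possible.'' Your add-generators-as-needed scheme cannot meet the dimension budget: the ideal generated by $f_{n}^{k_{n}}$ has genuinely new left translates $(u\otimes v)f_{n}^{k_{n}}$ in infinitely many bidegrees, and absorbing each by fresh vectors in $G_{n}$ or $F_{n}$ would make their dimensions unbounded. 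The mechanism that makes finitely many small spaces suffice --- the key idea of \cite{s} and \cite{n}, which your proposal never identifies --- is that for a fixed $f$ of bounded bidegree all powers $f^{m}$, and all their $H'$-translates, factor through fixed spaces $T(f,n)$, $T'(f,n)$, $G_{1}(f,n)$, $G_{2}(f,n)$ spanned by prefix components of the powers and of dimension polynomial in $n$ times $2^{O(t)}$ (Lemmas \ref{left1}--\ref{smaller} and \ref{G}); combining this with the case analysis on where the prefix degree falls relative to the grids $[ka_{n+1}-a_{n},ka_{n+1}+a_{n}]$ (Lemmas \ref{E}, \ref{final}, \ref{finalop}, Theorem \ref{important2}) is what parks every translate of $f_{n}^{m}$ and $\xi(f_{n}^{m})$ inside $M_{n+1}+Q_{n}+Q_{n-1}$ outright --- there is no working modulo earlier stages in the paper, and the exponent is chosen large relative to $a_{n+1}$ (e.g.\ $20na_{n+1}$ or $100a_{n+2}$), not to make the bidegree divisible by $a_{n+1}$. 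Without this factorization machinery, your dimension bookkeeping and your account of the factors $2^{500ia_{i-1}}$ and $a_{i+2}^{100i}$ remain heuristic claims rather than a proof.
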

\begin{proof} The proof is long and technical,   and in  most cases follows ideas from \cite{s}. The detailed proof  is presented in the Appendix,  sections \ref{appendix1}-\ref{appendix4} at the end of this paper. Observe that $\xi $ is defined at the beginning of this theorem.
\end{proof}

 Notice that $f_{i}$ in Theorem \ref{S} need not be nonzero.
\section{Linear mappings }\label{lm}
In this section we will introduce sets $U_{i}\subseteq A(a_{i})$  and sets $B_{a_{i+1}}(U_{i})$, which will be related to sets $Q_{1},
Q_{2}, Q_{3}$ from Theorem \ref{S}. 
Throughout this paper we will use the following notation.
 Let $n,m$ be natural numbers and  $m<n$. Given set $U\subseteq A(m)$ define \[B_{n}(U)=\sum_{k=0}^{\infty }A(k\cdot n)U_{i}A+A(k\cdot n-m)U_{i}A.\]
For each $i$, denote \[A[a_{i+1}]= \sum_{k=1}^{\infty }A(k\cdot a_{i+1}).\]
 Given linear mapping $f$, by $ker f$ we will denote the kernel of $f$.
 We begin with the following Lemma.
\begin{lemma}\label{U} 
 Let $a_{1}, a_{2}, \ldots $ be a sequence of natural numbers such that $2a_{i}<a_{i+1}$ and  $a_{i}$ divides $a_{i+1}$ for all $i$, and $a_{0}=0$. Suppose that for  $1\leq i\leq n$ we are given a linear space $U_{i}\subseteq A(a_{i})$, and a monomial $m_{i}\in A(a_{i})$   such that \[U_{i}\oplus Km_{i}= A(a_{i}),A(a_{i})\cap \sum_{j=1}^{i-1}B_{a_{j+1}}(U_{j})\subseteq U_{i}.\] 
 Then for $2\leq i\leq n+1$ there are linear mappings $\delta _{i}:A[a_{i}]\rightarrow A$ such that for all $k\geq 0,i\geq 2,$ \[\delta _{i}(A(k\cdot a_{i}))=A(k\cdot a_{i}\cdot t _{i}), A[a_{i}]\cap \sum_{j=1}^{i-1}B_{a_{j+1}}(U_{i})\subseteq ker \delta _{i}\] where $t _{i}=\prod_{j=1}^{i-1}(1-{2a_{j}\over a_{j+1}}).$ 
\end{lemma}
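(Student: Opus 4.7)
The plan is to construct the $\delta_i$ by induction on $i\geq 2$, peeling off one ``level'' at a time. For each $i\geq 2$, let $\pi_{i-1}\colon A(a_{i-1})\to K$ be the linear functional with kernel $U_{i-1}$ and $\pi_{i-1}(m_{i-1})=1$, which is well-defined because $A(a_{i-1})=U_{i-1}\oplus Km_{i-1}$. Every monomial in $A(a_i)$ factors uniquely as (its first $a_{i-1}$ letters)$\cdot$(its middle $a_i-2a_{i-1}$ letters)$\cdot$(its last $a_{i-1}$ letters), yielding a $K$-linear isomorphism
\[
A(a_i)\;\cong\;A(a_{i-1})\otimes A(a_i-2a_{i-1})\otimes A(a_{i-1}).
\]
Since $2a_{i-1}<a_i$ and $a_{i-1}\mid a_i$, the integer $a_i-2a_{i-1}$ is a positive multiple of $a_{i-1}$, so $A(a_i-2a_{i-1})\subseteq A[a_{i-1}]$ and we may define
\[
\sigma_i\;:=\;\pi_{i-1}\otimes\delta_{i-1}\otimes\pi_{i-1}\colon A(a_i)\longrightarrow A(a_it_i),
\]
using the convention $\delta_1:=\mathrm{id}_{A[a_1]}$ (consistent with $t_1=1$ and an empty kernel condition). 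The analogous $K$-linear isomorphism $A(ma_i)\cong A(a_i)^{\otimes m}$ followed by multiplication in $A$ then extends $\sigma_i^{\otimes m}$ to $\delta_i\colon A[a_i]\to A$; equivalently, $\delta_i(w_1\cdots w_m)=\sigma_i(w_1)\cdots\sigma_i(w_m)$ for monomials $w_1,\dots,w_m\in A(a_i)$.

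The degree identity $\delta_i(A(ka_i))=A(ka_it_i)$ follows from $\sigma_i(A(a_i))=A((a_i-2a_{i-1})t_{i-1})=A(a_it_i)$: the containment is built into the definition, and for surjectivity any $z\in A(a_it_i)$ equals $\delta_{i-1}(v)$ for some $v\in A(a_i-2a_{i-1})$ by induction, giving $\sigma_i(m_{i-1}\,v\,m_{i-1})=\pi_{i-1}(m_{i-1})^2\,z=z$. Taking multiplication powers extends this to each level $k$.

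The heart of the proof is the kernel containment $A[a_i]\cap\sum_{j=1}^{i-1}B_{a_{j+1}}(U_j)\subseteq\ker\delta_i$. Because $\delta_i$ restricts to $\sigma_i^{\otimes m}$ on $A(ma_i)\cong A(a_i)^{\otimes m}$, it is enough to verify that $\sigma_i$ annihilates $A(a_i)\cap B_{a_{j+1}}(U_j)$ for each $1\leq j\leq i-1$. For $j=i-1$ such an element lies in $U_{i-1}\cdot A(a_i-a_{i-1})+A(a_i-a_{i-1})\cdot U_{i-1}$, and the outer copies of $\pi_{i-1}$ in $\sigma_i$ annihilate the two summands respectively. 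For $j<i-1$, use that $a_{j+1}\mid a_{i-1}$ (from the divisibility chain $a_{j+1}\mid a_{j+2}\mid\cdots\mid a_{i-1}$) together with $a_j<a_{j+1}$: a short case analysis on the position $ka_{j+1}$ or $ka_{j+1}-a_j$ of the $U_j$-factor shows it lies entirely inside exactly one of the three tensor factors---the ``no-straddle'' claim. If it lies in an outer $A(a_{i-1})$-factor, that factor is in $A(a_{i-1})\cap B_{a_{j+1}}(U_j)\subseteq U_{i-1}$ by the hypothesis, and $\pi_{i-1}$ kills it; if it lies in the middle, the middle factor is in $A(a_i-2a_{i-1})\cap B_{a_{j+1}}(U_j)\subseteq A[a_{i-1}]\cap\sum_{j'<i-1}B_{a_{j'+1}}(U_{j'})$, which by the inductive hypothesis lies in $\ker\delta_{i-1}$.

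The main obstacle is this ``no-straddle'' step. The key elementary observation is that, since both $a_{i-1}$ and $a_i-a_{i-1}$ are multiples of $a_{j+1}$, any boundary-crossing window of length $a_j$ starting at $ka_{j+1}$ (or $ka_{j+1}-a_j$) would force $a_j\geq a_{j+1}$, contradicting $a_j<a_{j+1}$. Once that is in hand, the tensor-product definition of $\sigma_i$ makes the rest of the kernel verification automatic.
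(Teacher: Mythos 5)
Your proposal is correct and follows essentially the same route as the paper's proof: the map $\sigma_i=\pi_{i-1}\otimes\delta_{i-1}\otimes\pi_{i-1}$ is exactly the paper's $\delta_{i}(v_1v_2v_3)=\gamma_{i-1}(v_1)\delta_{i-1}(v_2)\gamma_{i-1}(v_3)$, extended multiplicatively to $A[a_i]$, with the same induction and the same case analysis on where the $U_j$-factor sits. Your explicit ``no-straddle'' justification via divisibility of $a_{i-1}$ and $a_i-a_{i-1}$ by $a_{j+1}$ just makes precise what the paper asserts with ``because $a_n$ divides $a_{n+1}$''.
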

\begin{proof} Induction by $n$. For $n=2$ we define $\delta _{2}$ in the following way.  Let $\gamma _{1}:A(a_{1})\rightarrow K$ be a linear mapping with kernel equal to $U_{1}$ and such that $\gamma _{1}(m_{1})=1$.  Let $\delta  _{2}:A(a_{2})\rightarrow A$  be a linear mapping defined as follows, if $v=v_{1}v_{2}v_{3}$ for monomials  $v_{1}, v_{3}\in M(a_{1})$ and  $v_{2}\in M(a_{2}-2a_{1})$,
  then  $\delta _{2} (v)=\gamma_{1}(v_{1})\cdot v_{2}\cdot \gamma _{1}(v_{3})$. We then extend $\delta _{2}$ by linearity to all elements of $A(a_{2})$. 

 We now extend  mapping $\delta _{2}$ by linearity to all homogeneous elements with degrees divisible by $a_{2}.$  Let $w=\prod_{i=1}^{k} w_{i}$ with all $w_{i}\in M(a_{2})$. We can now define $\delta _{2}(w)=\prod_{i=1}^{k}\delta _{2}(w_{i}),$ and extend $\delta _{2} $ by linearity to all elements from $A[a_{2}].$

 Observe that $\delta _{2}(A(a_{2}))=A(a_{2}-2a_{1})=A(a_{2}(1-{2a_{1}\over a_{2}})),$ so 
$\delta _{2}(A(k\cdot a_{2}))=A(k\cdot a_{2}t_{2}),$ as required.

 Observe that if $p\in B_{a_{2}}(U_{1})$ and $p$ is a homogeneous element of degree divisible by $a_{2}$, then $\delta _{2}(p)=0$. It follows because every $p\in B_{a_{2}}(U_{1})$ is a linear combination of elements from $M(ka_{2})U_{1}M$ and  from $M(ka_{2})M(a_{2}-a_{1})U_{1}M$, images of such elements under $\delta _{2}$ are zero.

  Suppose now that for every $i\leq n$ we defined mappings $\delta _{i}:A[a_{i}]\rightarrow A[a_{i}]$ in a such way that for every $i\leq n$ we have  
\[\delta _{i}(A(k\cdot a_{i}))=A(k\cdot a_{i}\cdot t _{i}), A[a_{i}]\cap \sum_{j=1}^{i-1}B_{a_{j+1}}(U_{i})\subseteq ker \delta _{i}.\]

  To construct  $\delta _{n+1}$ we first  define  $\gamma _{n}:A(a_{n})\rightarrow A(a_{n})$. Let $\gamma _{n}$  be a linear mapping which has  kernel $U_{n}$ and $\gamma _{n}(m_{n+1})=1$.  Let $\delta _{n+1}:A(a_{n+1})\rightarrow A(a_{n+1})$  be a linear map defined as follows, if $v=v_{1}v_{2}v_{3}$ for monomials $v_{1}, v_{3}\in M(a_{n})$ and $v_{2}\in M(a_{n+1}-2a_{n})$,
  then  $\delta _{n+1} (v)=\gamma_{n}(v_{1})\delta_{n}(v_{2})\gamma _{n}(v_{3})$. We then extend $\delta _{n+1}$ by linearity to all elements of $A(a_{n+1})$.  
 
We now extend mapping $\delta _{n+1}$  to elements with  degrees divisible  by $a_{n+1}$ in the following way.  Let $w=\prod_{i=1}^{k} w_{i}$ with all $w_{i}\in A(a_{n+1})$. We can now define $\delta _{n+1}(w)=\prod_{i=1}^{k}\delta _{n+1}(w_{i})$,
 and extend $\delta _{n+1}$ by linearity to all elements from $A[a_{n}].$

 Observe that $\delta _{n+1}(A(a_{n+1}))=\delta _{n}(A(a_{n+1}-2a_{n}))=A((a_{n+1}-2a_{n})t_{n})=A(a_{n+1}(1-{2a_{n}\over a_{n+1}})t_{n})=A(a_{n+1}t_{n+1}),$ as required.
 
We will now show that  if $p\in  \sum_{i=1}^{n}B_{a_{i+1}}(U_{i})$ and $p$ is a homogeneous element of degree divisible by $a_{n+1}$, then $\delta _{n+1}(p)=0$.
 By the inductive  assumption we may assume that if $p'\in  \sum_{i=1}^{n-1}B(U_{i})$ and $p'$ is a homogeneous element of degree divisible by $a_{n}$, then $\delta _{n}(p')=0$.
As $\delta _{n+1}$ is a linear mapping, it suffices to consider the following two cases only.

{\bf Case $1.$} $p\in B$  where $B=\sum_{i=1}^{n-1}B_{a_{i+1}}(U_{i}),$ 
 If $p\in A(k\cdot a_{n+1})$   and  $p\in B,$ then $p$ is a linear combination of elements from $M(ka_{n+1})rM$  where $r\in A(a_{n+1})$  and either $r\in (A(a_{n})\cap B)M(a_{n+1}-a_{n}),$
 or $r\in M(a_{n})(A(a_{n+1}-2a_{n})\cap B)M(a_{n})$ or $r\in M(a_{n+1}-a_{n}) (A(a_{n})\cap B)$ 
 (because $a_{n}$ divides $a_{n+1}$). By Assumptions $B\cap A(a_{n})\subseteq U_{n}\subseteq ker \gamma _{n}.$
  By the definition of the mapping $\delta _{n+1}$,  we get 
 $\delta _{n+1}(r)=0$ in all these cases, because $\delta _{n}(B\cap A(k\cdot a_{n}))=0$ by the inductive assumption.

{\bf Case $2.$} $p\in B_{a_{n+1}}(U_{n})$. Observe now that if $p\in A(k\cdot a_{n+1})$  and  $p\in B_{a_{n+1}}(U_{n})$ then $p$ is a linear combination of elements from $M(ka_{n+1})rM$  where $r\in A(a_{n+1})$ and 
 either $r\in U_{n}M(a_{n+1}-a_{n}),$
 or  $r\in A(a_{n+1}-a_{n})U_{n}$. Recall that $\gamma _{n}(U_{n})=0$.  By the definition of the mapping $\delta _{n+1} $ we get 
$\delta _{n}(r)=0$ in all these cases.
\end{proof}

 Let $m,n,\alpha $ be natural numbers with $m>n$, and let $g\in A(\alpha ).$ Define  
\[M(g,n,m)=\sum_{i:[i,i+\alpha ]\in W}A(i)gA\]
 where $[i,i+\alpha ]\in W$ if and only if the interval $[i,i+\alpha ]$ is disjoint with interval $[km-n,km+n]$, for every $k\geq 0.$
 In Section \ref{Hil} we will use the fact that $M(g,n,m)$  are related to some linear spaces $Q(f,n,m)$, $Q'(f, n, m).$

\begin{lemma}\label{E(f)} Let $a_{1}, a_{2}, \ldots $ be a sequence of natural numbers such that $0<2000j^{2}a_{j}<a_{j+1}$ and  $10(j+1)^{2}a_{j}$ divides $a_{j+1}$ for all $j\geq 1,$ and $10$ divides $a_{1}.$ 
 Let $i$ be a natural number. Suppose that for all $1\leq j\leq i$ we are given linear spaces  $U_{j}\subseteq A(a_{j})$ and monomials $m_{j}\in A(a_{j})$  such that $U_{j}\oplus Km_{j}= A(a_{j})$ and 
$A(a_{j})\cap \sum_{k=1}^{j-1}B_{a_{k+1}}(U_{k})\subseteq U_{j}.$ 

Let $n\leq i$ and let $f\in A(\alpha )$ for some $ {a_{n}\over 10n}\leq \alpha \leq 10n\cdot a_{n}$. 
    Denote $F=M(f,a_{n}, a_{n+1})$. 
Suppose that $F\cap A(a_{n+1+k})\subseteq U_{n+1+k},$ for all $k\geq 0,$ with $n+k+1\leq i.$ Let  mappings $\delta _{j}$ be constructed as in the proof of Lemma \ref{U}.
 Then there is a set  $E(f)$ with cardinality smaller than $2a_{n}^{4}\cdot 2^{2a_{n-1}},$ and such that  
$\delta _{n+1+k}(F\cap A[a_{n+1+k}])\subseteq  <E(f)>,$ for all $k\geq 0$ with  $k+n+1\leq i+1$    
where $<E(f)>$ is the ideal generated in $A$ by elements from $E(f)$. Moreover, $E(f)$ is a homogeneous set and 
 $E(f)\subseteq \sum_{i>{a_{n} \over 40n}}^{20na_{n}}A(i)$.
\end{lemma}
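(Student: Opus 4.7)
My plan is to unfold the recursive definition of $\delta_{n+1+k}$ applied to generators of $F\cap A[a_{n+1+k}]$, isolate the $f$-dependent content into a localized factor, and collect its possible values into $E(f)$. Every element of $F\cap A[a_{n+1+k}]$ is a $K$-linear combination of terms $w=\mu\tau\nu$, where $\tau$ is a monomial occurring in the expansion of $f$, $\mu\in A(i)$, $\nu\in A$, $[i,i+\alpha]\in W$, and $\deg(w)$ is a positive multiple of $a_{n+1+k}$. By linearity of $\delta_{n+1+k}$ it suffices to control $\delta_{n+1+k}(\mu\tau\nu)$ for each such generator.

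I would then unfold the recursion scale by scale. Split $w$ into consecutive $a_{n+1+k}$-blocks $B_{1},\ldots,B_{K}$; since $\alpha\le 10n\cdot a_{n}\ll a_{n+1+k}$, $\tau$ lives in a single block $B_{p}$, and Lemma~\ref{U} gives
\[\delta_{n+1+k}(B_{p})\;=\;\gamma_{n+k}(B_{p}^{(1)})\,\delta_{n+k}(B_{p}^{(2)})\,\gamma_{n+k}(B_{p}^{(3)}).\]
In the ``generic'' placement in which $\tau$ falls in the middle at every intermediate scale, iterating this descent $k+1$ times lands $\tau$ inside the middle $C^{(2)}$, of length $a_{n+1}-2a_{n}$, of the $a_{n+1}$-block $C$ containing it; the last step is automatic because $[i,i+\alpha]$ avoids every interval $[la_{n+1}-a_{n},la_{n+1}+a_{n}]$. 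The outer layers produce only scalars $\gamma_{j-1}(\cdot)\in\{0,1\}$ and $\delta_{j-1}$-images of blocks disjoint from $\tau$, all independent of $f$, so
\[\delta_{n+1+k}(\mu\tau\nu)\;=\;P\cdot\delta_{n}(C^{(2)})\cdot Q\]
with $P,Q$ depending only on $\mu,\nu$. Splitting $C^{(2)}$ into its $a_{n}$-sub-blocks $e_{1},\ldots,e_{L}$, the sub-blocks overlapping $\tau$ form a contiguous run $e_{l},\ldots,e_{l+r}$ with $r+1\le 10n+2$, and multiplicativity of $\delta_{n}$ gives
\[\delta_{n}(C^{(2)})\;=\;R_{1}\cdot X\cdot R_{2},\qquad X\;=\;\prod_{j=l}^{l+r}\delta_{n}(e_{j}),\]
where $R_{1},R_{2}$ are context-dependent and only $X$ depends on $f$.

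Now I would define $E(f)$ as the set of all such factors $X$ arising as $\tau$ ranges over monomials of $f$, the offset $\beta\in[0,a_{n})$ of $\tau$ in $e_{l}$ varies, and the boundary strings of $e_{l},e_{l+r}$ lying outside $\tau$ vary. The reduction of the relevant boundary data to length $\le 2a_{n-1}$ uses that $\gamma_{n-1}$ is $\{0,1\}$-valued on monomials: a nontrivial $\delta_{n}(e_{j})$ on an edge block forces the outer $a_{n-1}$-prefix and $a_{n-1}$-suffix to be the corresponding fragments of $m_{n-1}$, and recursively unwinding $\delta_{n-1}$ on what remains shows that on each side the surviving free context has length at most $a_{n-1}$. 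Bounding the combinatorial choices (offset, number and position of overlapped blocks, monomials of $f$) crudely by $a_{n}^{4}$ gives $|E(f)|<2a_{n}^{4}\cdot 2^{2a_{n-1}}$. Each $\delta_{n}(e_{j})\in A(a_{n}t_{n})$ with $t_{n}=\prod_{j<n}(1-2a_{j}/a_{j+1})>1/2$ under the growth hypothesis $a_{j+1}>2000j^{2}a_{j}$, so every $X\in E(f)$ is homogeneous of degree in $(a_{n}/(40n),20n\cdot a_{n}]$, as required; finally $\delta_{n+1+k}(w)=P\cdot X\cdot Q\in\langle E(f)\rangle$.

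The main obstacle is the ``edge'' placements in which $\tau$ falls into a $\gamma_{n+k'}$-consumed prefix or suffix at some intermediate scale $n+1<n+1+k'\le n+k$: the condition $[i,i+\alpha]\in W$ only forbids $a_{n}$-neighborhoods of $a_{n+1}$-multiples, not the much larger $a_{n+k'}$-neighborhoods of $a_{n+1+k'}$-multiples. I would handle these by combining the scalar vanishing of $\gamma_{n+k'}$ on generic context monomials with the assumption $F\cap A(a_{n+1+k'})\subseteq U_{n+1+k'}$ and the kernel property of the $\delta$-maps from Lemma~\ref{U}: together these force the contribution of such placements to be either scalar-killed or absorbed (via descent into the generic case above) into $\langle E(f)\rangle$. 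This bookkeeping, together with the verification that the surviving free context on each side of $\tau$ really reduces to length $a_{n-1}$, is the technical heart of the argument; once it is in place, the rest is the assembly described above.
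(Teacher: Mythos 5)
Your overall architecture does mirror the paper's proof: unfold $\delta_{n+1+k}$ scale by scale, kill the ``edge'' placements using $F\cap A(a_{n+k})\subseteq U_{n+k}\subseteq \ker\gamma_{n+k}$, use the $W$-condition at scale $a_{n+1}$ for the last step, and then collect generators built from $f$ with short free context. However, the step that carries the entire cardinality bound is justified by a false claim. The map $\gamma_{n-1}$ is \emph{not} $\{0,1\}$-valued on monomials, and a nonzero $\delta_{n}(e_{j})$ on an edge block does \emph{not} force the outer $a_{n-1}$-prefix or suffix to be a fragment of $m_{n-1}$: the space $U_{n-1}$ is an arbitrary complement of $Km_{n-1}$ (in the application it is produced by the Nullstellensatz lemma and the $B$-conditions), not a span of monomials, so $\gamma_{n-1}$ takes arbitrary nonzero values on many monomials. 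As a consequence, your $E(f)$, defined as the set of factors $X=\prod_{j=l}^{l+r}\delta_{n}(e_{j})$ with the boundary strings of $e_{l},e_{l+r}$ allowed to vary, records up to $2(a_{n}-1)$ free letters of context and has cardinality of order $2^{2a_{n}}$, vastly exceeding $2a_{n}^{4}\cdot 2^{2a_{n-1}}$; the reduction of this boundary data to length $2a_{n-1}$, which you base on the $m_{n-1}$-fragment claim, is exactly the point that fails.

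The correct mechanism, and the one the paper uses, is different: $\delta_{n}$ on an edge block factors as $\gamma_{n-1}(\cdot)\,\delta_{n-1}(\cdot)\,\gamma_{n-1}(\cdot)$ and $\delta_{n-1}$ is multiplicative over $a_{n-1}$-blocks, so every $a_{n-1}$-block of context disjoint from $\tau$ contributes only a scalar or a left/right multiplier lying in $A$, and these are absorbed by the ideal $\langle E(f)\rangle$ no matter what their values are. Hence only the two $a_{n-1}$-blocks straddling the ends of $\tau$ need to be recorded, i.e.\ the generators are images under positional maps of elements $u f u'$ with $u,u'$ monomials of degree $<a_{n-1}$ (the paper's set $X$ and maps $h_{p,c,d}$), which is what produces the factor $2^{2a_{n-1}}$ together with the $a_{n}^{2}\cdot 2a_{n-1}$ positional choices. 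In addition, you explicitly defer the intermediate-scale bookkeeping that you yourself call the technical heart; in the paper this is an induction on $k$ with a three-case analysis at each scale (prefix, middle, suffix), the boundary cases vanishing because the $a_{n+k}$-window containing $f$ lies in $F\cap A(a_{n+k})\subseteq U_{n+k}=\ker\gamma_{n+k}$. You name the right ingredients for that part, but as written the proposal neither carries it out nor correctly establishes the cardinality bound, so the argument is incomplete precisely where the lemma is delicate.
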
 
\begin{proof} Recall that $f\in A(\alpha ).$  Denote first $H(f)=\{\delta _{n}(M(i)fM(j)): i,j< a_{n}, $ and $a_{n}$ divides $i+\alpha +j\}  $.
 Then $H(f)\subseteq \delta _{n}(\sum_{i={a_{n}\over 10n}}^{10na_{n}}A(i))\subseteq \sum_{i={t_{n}a_{n}\over 10n}}^{10nt_{n}a_{n}}A(i).$ We will first show that 
$\delta _{n+1+k}(F\cap A[a_{n+1+k}])\subseteq  <H(f)>,$ for all $k\geq 0$ with  $k+n+1\leq i+1$    
where $<H(f)>$ is the ideal generated in $A$ by elements from $H(f)$.

We will first show that $\delta _{n+1}(F\cap A[a_{n+1}])\subseteq  <H(f)>.$ Recall that elements from $F\cap A[a_{n+1}]$ consist of linear combination of elements from $M(j)fM(j')$ where $j+\alpha +j'=\beta \cdot a_{n+1}$, for some $\beta $
 and the interval $[j,j+\alpha ]$ is disjoint with any interval $[ka_{n+1}-a_{n}, ka_{n+1}+a_{n}.]$
  We can write $j=c+c'$, $j'=d+d'$ where $c',d'$ are divisible by $a_{n+1}$ and $0\leq c,d<a_{n+1}$. As $\alpha \leq {1\over 200}a_{n+1}$ and $c+\alpha +d$ is divisible by $a_{n+1}$ it follows that 
 either  $c+\alpha +d=a_{n+1}$ or  $c+\alpha +d=2a_{n+1}$. If the later holds then $c,d\geq a_{n+1}-\alpha $ which implies that the interval $[c,c+\alpha ]$ is not disjoint with interval $[a_{n+1}-a_{n}, a_{n+1}+a_{n}],$ 
  hence the interval $[j,j+\alpha ]$ is not disjoint with some interval $[ka_{n+1}-a_{n}, ka_{n+1}+a_{n}.]$
 It follows that  $c+\alpha +d=a_{n+1}.$ 
 Observe now that, by the definition of $\delta _{n+1},$ and by Lemma \ref{U} we get
 $\delta _{n+1}(M(j)fM(j'))=\delta _{n+1} (M(c'))\delta _{n+1} (M(c)fM(d))\delta _{n+1}(M(d'))\subseteq 
A\delta _{n+1} (M(c)fM(d))A.$ Consequently in is sufficient to show that 
 $\delta _{n+1} (M(c)fM(d))\subseteq  <H(f)>.$

 Recall that $c+\alpha +d=a_{n+1}$ and  the interval
 $[c,c+\alpha ]$ is disjoint with interval  $[a_{n+1}-a_{n}, a_{n+1}+a_{n}]$, so  $\deg d>a_{n}$.  
 Observe also that  $\deg c>a_{n}$ because  the interval
 $[c,c+\alpha ]$ is disjoint with interval  $[-a_{n}, a_{n}].$ We can write $c=c_{1}c_{2}$, $d=d_{1}d_{2}$ where $c_{1}, d_{2}\in M(a_{n}).$
  By the definition of $\sigma _{n+1}$ we get $\delta _{n+1}(cfd)=\delta_{n+1}(c_{1}c_{2}fd_{1}d_{2})=\gamma _{n}(c_{1})\delta _{n}(c_{2}fd_{1}) \gamma _{n}(d_{2})\in K\cdot \delta _{n}(c_{2}fd_{1}).$

 We can write $c_{2}=c_{3}+c_{4}$, $d_{1}=d_{3}+d_{4}$ where $c_{3}, d_{4}$ are  divisible by $a_{n}$, and $c_{4}, d_{3}$ are monomials of degres less than $a_{n}$. 
 By  the definition of $\delta _{n}$ we then have $\delta _{n}(M(c_{2})fM(d_{1}))=\delta _{n}(M(c_{3}))\delta _{n}(M(c_{4})fM(d_{3}))\delta (M(d_{4}))\subseteq A(c_{3}t _{n})\delta _{n}(M(c_{4})fM(d_{3}))A(d_{4}t _{n}).$
 By the definition of $H(f)$ at the beginning of this proof $\delta _{n}(M(c_{4})fM(d_{3}))\in H(f),$ as required.

 We will now show that $\delta _{n+1+k}(F\cap A[a_{n+1+k}])\subseteq  <H(f)>,$ for all $k\geq 0$, by induction on $k.$ If $k=0$ the result follows from the first part of this proof. Suppose that $k\geq 1$ and that it is true for all numbers smaller than $k$.  
 By the inductive assumption $\delta _{n+k}(F\cap A[a_{n+k}])\subseteq  <H(f)>.$ We need to show that $\delta _{n+k+1}(F\cap A[a_{n+k+1}])\subseteq  <H(f)>.$
Recall that elements from $F\cap A[a_{n+1}]$ consist of linear combination of elements from $M(j)fM(j')$ where $j+\alpha +j'=\beta \cdot a_{n+k+1}$, for some $\beta $
 and the interval $[j,j+\alpha ]$ is disjoint with any interval $[la_{n+1}-a_{n}, la_{n+1}+a_{n}]$ (and hence disjoint with any interval $[la_{n+k}-a_{n}, la_{n+k}+a_{n}],$ and any interval $[la_{n+k+1}-a_{n}, la_{n+k+1}+a_{n}] .)$

Similarly as before it suffices to consider the case when $j,j'< a_{n+k+1}$ and either $j+j'+\alpha=a_{n+k+1}$ or $j+j'+\alpha=2a_{n+k+1}$. 
 If the later holds then, $j,j'\geq a_{n+k+1}-\alpha .$  Therefore $[j,j+\alpha ]$ contains point $a_{n+k+1}$ and $j<a_{n+k+1}.$ 
 Therefore, the interval $[j,j+\alpha ]$ is not disjoint with interval $[a_{n+k+1}-a_{n}, a_{n+k+1}+a_{n}],$ a contradiction.
 It follows that $j+j'+\alpha=a_{n+k+1}.$ 
 Observe that interval $[j, j+\alpha ]$ cannot contain any point $l\cdot a_{n+k}$, because then it would not be disjoint with some interval $[l\cdot a_{n+k}-a_{n}, l\cdot a_{n+k}+a_{n}].$
   It follows that either $j+\alpha < a_{n+k}$; or $j>a_{n+k}$ and $j+\alpha <a_{n+k+1}-a_{n+k}$; or $j> a_{n+k+1}-a_{n+k}.$

By the definition of $\delta _{n+k+1},$ if $j+\alpha < a_{n+k}$  then  $\delta _{n+k+1}(M(j)fM(j'))=\gamma _{n+k}(M(j)fM(a_{n+k}-j-\alpha))\delta _{n+k}(M(a_{n+k+1}-2a_{n+k}))\gamma _{n+k}(M(a_{n+k})).$
 As $M(j)fM(a_{n+k}-j-\alpha) \subseteq F\cap A(a_{n+k})\subseteq U_{n+k}\subseteq ker \gamma _{n+k}$ by assumption, so $\delta _{n+k+1}(M(j)fM(j'))=0.$

If  $j>a_{n+k}$ and $j+\alpha <a_{n+k+1}-a_{n+k}$ then $\delta _{n+k+1}(M(j)fM(j'))=\gamma _{n+k}(M(a_{n+k}))\delta _{n+k}(M(j-a_{n+k})fM(j'-a_{n+k}))\gamma _{n+k}(M(a_{n+k})).$
 By the inductive assumption $\delta _{n+k}(M(j-a_{n+k})fM(j'-a_{n+k}))\subseteq <H(f)>$, hence we get  $\delta _{n+k+1}(M(j)fM(j'))\subseteq <H(f)>$.

 If $j>a_{n+k+1}-a_{n+k}$ then  $\alpha +  j'< a_{n+k}$. Hence, $\delta _{n+k+1}(M(j)fM(j'))=\gamma _{n+k}(M(a_{n+k}))\delta _{n+k}(M(a_{n+k+1}-2a_{n+k}))\gamma _{n+k}(M(j-a_{n+k+1}+a_{n+k})fM(j'))=0,$
 since $M(j-a_{n+k+1}+a_{n+k})fM(j')\subseteq F\cap A(a_{n+k})\subseteq U_{n+k}\subseteq \ker \gamma _{n+k}.$

 We have shown that $\delta _{n+1+k}(F\cap A[a_{n+1+k}])\subseteq  <H(f)>,$ for all $k\geq 0$ with  $k+n+1\leq i+1.$

Assume that $c$ is a natural number divisible by $a_{n-1}$, and that $c$ is smaller than $a_{n}$.  Define $h_{c}:A(c)\rightarrow A$ be a function such that: if $c=a_{n-1}$ then $h_{c}=\gamma_{n-1}$;
 if $a_{n-1}<c<a_{n}$ then for a monomial $v=v_{1}v_{2}$ with $v_{1}\in M(c-a_{n-1})$, $v_{2}\in M(a_{n-1})$ we define $h_{c}(v_{1}v_{2})=\delta _{n-1}(v_{1})\gamma _{n-1}(v_{2}),$ we then extend $h_{c}$ by linearity to all elements of $A(c)$ (where $\gamma _{n-1}$  is as in the definition of the mapping $\delta _{n}$). 
 Similarly, define for such $c$, function $h'_{c}:A(c)\rightarrow A$ in the following way:  if $c=a_{n-1}$ then $h_{c}=\gamma_{n-1}$;
 if $a_{n-1}<c<a_{n}$ then for a monomial $v=v_{2}v_{1}$ with $v_{1}\in M(c-a_{n-1})$, $v_{2}\in M(a_{n-1})$ we define $h_{c}(v_{1}v_{2})=\gamma _{n-1}(v_{2})\delta _{n-1}(v_{1}),$ we then extend $h_{c}$ by linearity to all elements of $A(c).$  We define $h(1)=h'(1)=1$ if $r=1.$
 
 Let now $p$ be a natural number divisible by $a_{n}$ and let $0\leq c,d< a_{n},$  $p-c-d$ is divisible by $a_{n}$ (we will  set $h_{p,c,d}=0$ if  $p-c-d$ is not divisible by $a_{n}$). Define function $g_{p,c,d}:A(p)\rightarrow A$ as follows first for monomials:
if $v_{1}\in M(c), v_{2}\in M(p-c-d),$ $v_{3}\in A(d)$ we define $h_{p,c,d}=h_{c}(v_{1})\delta _{n}(v_{2})h'_{d}(v_{3}),$ we then extend $h_{p,c,d}$ by linearity to all elements from $A(p).$

 Let $X=\{M(i)fM(j): 0\leq i,j< a_{n-1}, $ and $a_{n-1}$ divides $i+\alpha +j\}  $. Observe that $X\subseteq \sum_{i=\alpha }^{\alpha +2a_{n-1}-2}A(i),$ and that the cardinality of $X$ is less than $2^{2a_{n-1}}.$
Define \[E(f)= \{ h_{p,c,d}(r): r\in X , p\in [\alpha, \alpha+2a_{n-1}-2], 0\leq c,d<a_{n-1}\}\]

Observe that the set $H(f)$ from the beginning of our proof is spanned by elements of the form $vfv'$ where $v,v'$ are monomials of degree less than $a_{n}$. Write $v=wu$, $v'=u'w'$ where $u,u'$ are monomials of degree $<a_{n-1}$ and $w,w'$ are monomials of degrees $t,t'$ with $t,t'$ divisible by $a_{n-1}.$  
Then by the definition of function $\delta _{n} $ we get  $\delta _{n}(vfv')\in h'_{t}(w)\cdot h_{p,c,d}(ufu')h_{t'}(w')$ where $c=a_{n}-t$ if $t>0$ and $c=0$ if $t=0$; and $c'=a_{n}-t'$ if $t'>0$ and $c'=0$ if $t'=0$ (because $vfv'$ has degree divisible by $a_{n}$ since $vfv'\in H(f)$). It follows that $H(f)\subseteq <E(f)>$.

 Observe that the cardinality of $E(f)$ depends on the number of choices of numbers  $c,d$ of degrees less than $a_{n}$  and on the number of choices of $p$, and of the cardinality of $X.$ Since $p\in [\alpha, \alpha +2a_{n-1}-2]$ we get that  the cardinality of $E(f)$ is smaller than $a_{n}^{2}\cdot 2a_{n-1}\cdot 2^{2a_{n-1}}<2a_{n}^{4}2^{2a_{n-1}}.$

  Recall that $\delta _{n}(A(a_{n}))= A(t_{n}\cdot a_{n}).$ Observe that if $p-c-d\geq 0$ then $p-c-d\geq a_{n}$ and so $h_{p,c,d}(r)\in \sum_{i=a_{n}t_{n}}^{\alpha +2a_{n-1}}A(i),$ for all $r\in X.$ 
 Recall that $\delta _{n}(A(k\cdot a_{n-1})= A(k\cdot t_{n-1}\cdot a_{n-1}).$ If $p-c-d=0$ then   $h_{p,c,d}(r)\in \sum_{i=(\alpha -2a_{n-1})t_{n-1}}^{\alpha +2a_{n-1}}A(i),$ for all $r\in X.$
 Recall that senence $a_{1}, a_{2}, \ldots $ grows very guickly. It can be inductively proved that $t_{n}>{1\over 2}+{1\over 3n}.$  Recall that $\alpha >{a_{n}\over 10n}.$
  It follows that  $E(f)$ is a homogeneous set and 
$E(f)\subseteq \sum_{i>{a_{n} \over 40n}}^{20na_{n}}A(i)$.
\end{proof}

\section{A short section on the Golod-Shafarevich theorem.}\label{Golod}

 We begin by recalling Theorem $2.1$ from \cite{rr} for the special case when $d=2.$ In our situation $A=T=K[x_{1},x_{2}]$, we use algebra $A$ with two generators $x,y,$ so $d=2.$
 
\begin{theorem}\label{r}[Theorem $2.1$, \cite{rr}] Let $I\subseteq A$ be a homogeneous two-sided ideal $I=\oplus _{n=1,2,\ldots }I(n)$.
 Write $A(n)=I(n)\oplus B(n)$ where $b_{n}=\dim _{K} B(n).$ Suppose that ideal $I$ is generated by homogeneous elements  
 $f_{1}, f_{2}, \ldots \in A$. Let $r_{l}$ be the number of $f_{j}$'s  of degree $l$ (thus $r_{1}=0$). Then for all $n\geq 2$
\[b_{n}\geq db_{n-1}-\sum_{j=0}^{n-2}r_{n-j}b_{j}.\]

\end{theorem}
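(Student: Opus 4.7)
The plan is to exploit the fact that in the free algebra $A$ the multiplication map $A(1)\otimes_K A(n-1)\to A(n)$ is a graded isomorphism, so every degree-$n$ element has a \emph{unique} decomposition $\sum_i x_i c_i$ with $c_i\in A(n-1)$. Restricting this isomorphism to $A(1)\otimes B(n-1)$ shows that the image $A(1)\cdot B(n-1)\subseteq A(n)$ has dimension exactly $d\,b_{n-1}$. Since $A(n)=A(1)A(n-1)=A(1)B(n-1)+A(1)I(n-1)$ and $A(1)I(n-1)\subseteq I(n)$, composing with the projection $\pi:A(n)\to B(n)$ gives a surjection $A(1)B(n-1)\twoheadrightarrow B(n)$ whose kernel is $A(1)B(n-1)\cap I(n)$. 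Hence
\[
b_n \;=\; d\,b_{n-1}-\dim\bigl(A(1)B(n-1)\cap I(n)\bigr),
\]
and it suffices to bound this intersection from above by $\sum_{j=0}^{n-2}r_{n-j}b_j$.

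To this end I would first rewrite $I(n)$ in a form adapted to the $A(1)\otimes A(n-1)$ decomposition. Any product $af_lb$ with $\deg a\geq 1$ can be split as $x_i\cdot(a'f_lb)$ with $a'f_lb\in I(n-1)$, which gives
\[
I(n)=A(1)\cdot I(n-1)+\sum_l Kf_l\cdot A(n-d_l).
\]
Because $r_1=0$ forces $d_l\geq 2$, every generator admits a unique decomposition $f_l=\sum_i x_i g_{l,i}$ with $g_{l,i}\in A(d_l-1)$. An arbitrary $r\in I(n)$ therefore has the form $r=\sum_l f_l a_l+\sum_i x_i e_i$ with $a_l\in A(n-d_l)$ and $e_i\in I(n-1)$, and rewriting it in the unique form $r=\sum_i x_i c_i$ gives $c_i=\sum_l g_{l,i}a_l+e_i$. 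The hypothesis $r\in A(1)B(n-1)$ says $c_i\in B(n-1)$, and since $e_i\in\ker\pi$ this forces $c_i=\pi\bigl(\sum_l g_{l,i}a_l\bigr)$.

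This yields a surjective linear map
\[
\Phi:\prod_l A(n-d_l)\longrightarrow A(1)B(n-1)\cap I(n),\qquad (a_l)\longmapsto \sum_i x_i\,\pi\!\Bigl(\sum_l g_{l,i}a_l\Bigr).
\]
If $a_l\in I(n-d_l)$ for every $l$, then $g_{l,i}a_l\in I(n-1)\subseteq\ker\pi$, so $\Phi$ factors through $\prod_l B(n-d_l)$, giving a surjection $\prod_l B(n-d_l)\twoheadrightarrow A(1)B(n-1)\cap I(n)$. Taking dimensions gives $\dim(A(1)B(n-1)\cap I(n))\leq\sum_l b_{n-d_l}=\sum_{k\geq 2}r_k b_{n-k}=\sum_{j=0}^{n-2}r_{n-j}b_j$, which combined with the formula of the first paragraph is exactly the desired inequality. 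The main delicate point is the bookkeeping in the middle paragraph—verifying that the decomposition $I(n)=A(1)I(n-1)+\sum Kf_l\cdot A(n-d_l)$ together with $f_l=\sum_i x_i g_{l,i}$ indeed matches the unique $A(1)\otimes A(n-1)$ form cleanly—but once this is set up the construction of $\Phi$ and the resulting dimension bound follow immediately.
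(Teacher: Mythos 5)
Your argument is correct: the identity $b_n=d\,b_{n-1}-\dim\bigl(A(1)B(n-1)\cap I(n)\bigr)$, the decomposition $I(n)=A(1)I(n-1)+\sum_l f_lA(n-d_l)$, and the surjection factoring through $\prod_l B(n-d_l)$ (using $A(d_l-1)I(n-d_l)\subseteq I(n-1)$ and $d_l\geq 2$) all check out and yield exactly the stated bound. The paper itself gives no proof of this statement — it simply quotes Theorem 2.1 of \cite{rr} — and your proof is essentially the standard Golod--Shafarevich counting argument used there, just organized through the explicit map $\Phi$ and the projection onto $B$.
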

  
As mentioned in \cite{rr}, at the heart of the Golod-Shafarevich construction is the following proposition.

\begin{proposition}\label{golod}\cite{golod}  Let $b_{n}, r_{l}$ be the sequences in Theorem \ref{r}.
 Let $e>0$ be such that $e<{1\over 2}$. If  $r_{l}\leq e^{2}(2-2e)^{l-2}$ for all $l\geq 2$, then $A/I$ is an infinitely dimensional algebra.
\end{proposition}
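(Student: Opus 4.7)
The plan is to encode the recursion of Theorem \ref{r} as a power-series inequality for the Hilbert series of $A/I$. Set $B(t)=\sum_{n\geq 0} b_n t^n$ (with $b_0=1$) and $R(t)=\sum_{l\geq 2} r_l t^l$. Since $r_1=0$ one has $I(1)=0$, so $b_1=d=2$, and therefore the $n=0,1$ coefficients of $B(t)(1-2t+R(t))$ are respectively $1$ and $0$. For $n\geq 2$, Theorem \ref{r} says exactly that the coefficient
\[c_n\;:=\;b_n-2b_{n-1}+\sum_{j=0}^{n-2} r_{n-j} b_j\]
is non-negative. Collecting these facts gives the identity
\[B(t)(1-2t+R(t))\;=\;1+\sum_{n\geq 2} c_n t^n,\qquad c_n\geq 0.\]

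Next I would majorise $R(t)$ using $r_l\leq e^2(2-2e)^{l-2}$ by the geometric series
\[R(t)\;\leq\;\frac{e^2 t^2}{1-(2-2e)t},\]
convergent for $0\leq t<1/(2-2e)$, and then carry out the key algebraic simplification
\[1-2t+\frac{e^2 t^2}{1-(2-2e)t}\;=\;\frac{(1-(2-e)t)^2}{1-(2-2e)t}.\]
The right-hand side has a double zero at $t_0=1/(2-e)$, and since $0<e<1/2$ one has $0<t_0<1/(2-2e)$, so $R(t_0)$ converges (with non-negative terms) and the majorisation yields $1-2t_0+R(t_0)\leq 0$.

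I would finish by contradiction. Assume $A/I$ is finite-dimensional; then $B(t)$ is a polynomial with non-negative coefficients and in particular $B(t_0)\geq b_0=1>0$. Evaluating the generating-function identity at $t_0$ makes the right-hand side at least $1$, while the left-hand side is the product of the positive number $B(t_0)$ and the non-positive number $1-2t_0+R(t_0)$, and is therefore $\leq 0$. This contradiction forces $A/I$ to be infinite-dimensional.

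The one delicate step is spotting the factorisation $1-2t+e^2 t^2/(1-(2-2e)t)=(1-(2-e)t)^2/(1-(2-2e)t)$: the constants $2-e$ and $2-2e$ are exactly calibrated so that the majorant of $1-2t+R(t)$ touches zero inside the disc of convergence, and this is precisely why $e^2(2-2e)^{l-2}$ is the correct threshold on $r_l$. Once this identity is in hand, the rest is a routine exploitation of the non-negativity of the coefficients $b_n$ and $c_n$.
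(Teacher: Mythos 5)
Your proof is correct, but it takes a different route from the one the paper relies on: the paper does not prove Proposition \ref{golod} itself, it cites the proof of Proposition $3.1$ in \cite{rr}, whose whole point (as that paper's title indicates) is to obtain the Golod--Shafarevich conclusion \emph{without} Hilbert series, by a direct induction on $n$ that extracts from the recursion $b_{n}\geq 2b_{n-1}-\sum_{j}r_{n-j}b_{j}$ an explicit positive lower bound for every $b_{n}$. Your argument is instead the classical generating-function proof: the formal identity $B(t)(1-2t+R(t))=1+\sum_{n\geq 2}c_{n}t^{n}$ with $c_{n}\geq 0$ (using $b_{0}=1$, $b_{1}=2$ since $r_{1}=0$), the geometric majorisation of $R(t)$, and the factorisation $1-2t+e^{2}t^{2}/(1-(2-2e)t)=(1-(2-e)t)^{2}/(1-(2-2e)t)$ evaluated at $t_{0}=1/(2-e)$, which lies strictly inside the interval of convergence because $e<1/2$. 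All steps check out; the only point worth making explicit is that evaluating the formal identity at $t_{0}$ is legitimate because, under the assumption that $A/I$ is finite dimensional, $B(t)$ is a polynomial and $R(t_{0})$ converges absolutely, so the Cauchy product converges to $B(t_{0})(1-2t_{0}+R(t_{0}))\leq 0$ while the right-hand side is $\geq 1$. The comparison is instructive: your route explains where the threshold $e^{2}(2-2e)^{l-2}$ comes from (it is calibrated exactly so that the majorant of $1-2t+R(t)$ acquires a zero inside the disc of convergence), whereas the inductive proof in \cite{rr} is completely finitary, avoids any convergence discussion, and gives quantitative exponential growth of the $b_{n}$ rather than mere infinite-dimensionality.
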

 A simple proof of this proposition can be found in \cite{rr}, proof of Proposition $3.1.$ 

 We will now apply these lemmas to prove Lemma \ref{as}.

 Recall that 
$t_{i}=\prod_{j=1}^{i-1}(1-{2a_{j}\over a_{j+1}}).$
\begin{lemma}\label{as}  There are natural numbers $0<a_{1}<a_{2}<\ldots $ such that  $200(i+1)^{2}a_{i}$ divides $a_{i+1}$ for every $i\geq 1,$
 which satisfy the following condition:  If $I$ is an ideal generated by  $2^{510na_{n-1}}a_{n+2}^{110n}$ with degrees from $\sum_{i>{a_{n} \over 40n}}^{20na_{n}}A(i)$ then $A/I$ is an infinitely dimensional algebra.
\end{lemma}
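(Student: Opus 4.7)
The plan is to invoke the Golod--Shafarevich estimate of Proposition \ref{golod} with $e=1/4$, which reduces infinite-dimensionality of $A/I$ to verifying that the number $r_l$ of homogeneous generators of $I$ of degree $l$ satisfies $r_l \leq \tfrac{1}{16}(3/2)^{l-2}$ for all $l \geq 2$. The first step is to strengthen the divisibility requirement $200(i+1)^{2} a_i \mid a_{i+1}$ by insisting that the quotient $a_{i+1}/a_i$ be large enough to make the intervals $(a_n/(40n),\, 20 n a_n]$ pairwise disjoint as $n$ varies; the condition $a_{n+1} > 800\, n(n+1)\, a_n$ is compatible with the divisibility hypothesis and suffices. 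Under this strengthening each $l \geq 2$ belongs to at most one such interval, whence $r_l \leq N_n := 2^{510\, n\, a_{n-1}}\, a_{n+2}^{110 n}$ for the unique relevant $n$ (and $r_l = 0$ otherwise).

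It then suffices to arrange, for every $n \geq 1$, the single inequality
\[ N_n \;\leq\; \tfrac{1}{16}\,(3/2)^{a_n/(40n) - 1}, \]
whose right side is the Golod bound evaluated at the smallest degree in the $n$-th interval. Taking $\log_{2}$ and writing $c := \log_{2}(3/2) > 1/2$, this is
\[ 510\, n\, a_{n-1} \;+\; 110\, n \log_{2} a_{n+2} \;\leq\; \frac{c\, a_n}{40 n} \;-\; O(1). \]
Because $a_{n-1}$ is negligible compared to $a_n/n^{2}$ thanks to $200 n^{2} a_{n-1} \mid a_n$, this inequality is effectively an upper bound on $a_{n+2}$ of order $2^{c\, a_n/(4400\, n^{2})}$, while the divisibility forces the lower bound $a_{n+2} \geq 200(n+2)^{2} a_{n+1}$.

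The sequence is then built inductively: having fixed $a_1, \ldots, a_{n}$ respecting all previous constraints, choose $a_{n+1}$ to be any multiple of $200(n+1)^{2} a_n$ lying below the envelope $2^{c\, a_{n-1}/(4400(n-1)^{2})}$ dictated by the inequality at index $n-1$. An explicit schedule such as $a_{n+1} := 200(n+1)^{2} a_n \cdot 2^{\lfloor a_{n-1}/(10^{6} n^{3}) \rfloor}$ (or any similar slightly-sub-double-exponential recipe) maintains both constraints. The main obstacle is precisely this interlocking: the Golod--Shafarevich inequality at index $n$ couples $a_n$ with $a_{n+2}$ two steps apart, while the divisibility couples consecutive terms, so the induction must track an invariant roughly of the shape $\log_{2} a_{n+1} + O(\log n) \leq c\, a_{n-1}/(4400 n^{2}) - 5 a_{n-2}$. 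Once this invariant is verified at each stage, Proposition \ref{golod} applies and yields the conclusion.
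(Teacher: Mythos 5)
Your overall strategy is the same as the paper's: apply Proposition \ref{golod} with a fixed $e<\tfrac12$ and reduce everything to the single inequality $2^{510na_{n-1}}a_{n+2}^{110n}\le e^{2}(2-2e)^{l-2}$ evaluated at the smallest degree $l\approx a_{n}/(40n)$ of the $n$-th window. The gap is in the execution. The assertion ``because $a_{n-1}$ is negligible compared to $a_{n}/n^{2}$ thanks to $200n^{2}a_{n-1}\mid a_{n}$'' is false: divisibility only gives $a_{n-1}\le a_{n}/(200n^{2})$, so the term $510\,n\,a_{n-1}$ can be as large as $2.55\,a_{n}/n$, while the entire available budget is $\tfrac{c\,a_{n}}{40n}\approx 0.0146\,a_{n}/n$ with $c=\log_{2}(3/2)$; i.e.\ with divisibility alone this one term can overshoot by a factor of about $175$, independently of how $a_{n+2}$ is chosen. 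One must impose, and then propagate, a genuinely stronger growth condition, roughly $a_{n}\ge 7\cdot 10^{4}\,n^{2}a_{n-1}$. Your closing ``invariant'' implicitly contains such a condition, but you never verify that your explicit schedule satisfies it, and as literally written it does not: the $n=2$ instance of the inequality forces $a_{2}\gtrsim 1.4\cdot 10^{5}a_{1}$ (from the factor $2^{1020a_{1}}$), whereas your recipe gives $a_{2}=200\cdot 2^{2}a_{1}\cdot 2^{\lfloor a_{0}/\cdot\rfloor}=800\,a_{1}$ since there is no positive $a_{0}$ to feed the exponent. The statement that the recipe ``maintains both constraints'' is exactly the content of the lemma, and it is left as an assertion; so the proof is incomplete at its central step, and at the base case it is incorrect as stated.

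It is also worth noting that the delicate interlocking you describe is not needed. The paper simply takes $a_{1}$ enormous and $a_{i+1}=a_{i}\cdot(1000(i+1))^{110}$: the ratios are fixed polynomials in $i$, so $a_{n+2}\le a_{n}^{2}$, hence $110n\log_{2}a_{n+2}\le 220n\log_{2}a_{n}$ and $510na_{n-1}=510na_{n}/(1000n)^{110}$ are both trivially negligible against $a_{n}/(320n)$, and the required inequality is checked directly, with no induction on an upper envelope and no near-double-exponential growth. To repair your argument you should either carry out the verification of your invariant for a corrected schedule (including the base cases $n=1,2$, where the $a_{0}$-type terms vanish but $a_{2}$ must already be a very large multiple of $a_{1}$), or replace the schedule by a polynomial-ratio recursion of the paper's type, for which the verification is essentially one line.
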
 
\begin{proof} It can be inductively proved that $t_{i}>{1\over 2}+ {1\over 3i},$ for all $i>1.$ 
Let $a_{1}>1000000^{{1000}^{1000}}$ and define inductively  \[a_{i+1}= a_{i}\cdot (1000(i+1))^{110}.\]

 Let $r_{l}$ denote the number of homogeneous generating relations of degree $l$ in $I$.
So if ${a_{n}\over 40n}\leq l\leq 20na_{n} $ then $r_{l}<2^{510na_{n-1}}a_{n+2}^{110n}$  
 Denote $e={2-2^{1\over 4}\over 2},$ then $2-2e=2^{1\over 4}.$ Denote $\xi=2^{1\over 8},$ so \[\xi ^{2}=e.\]
 Consequently, to show that $r_{l}<e^{2}(2-2e)^{l}$ for every $l$, it suffices to show that  for every $n\geq 1$
    \[2^{510na_{n-1}}a_{n+2}^{110n}<e^{2}(2-e)^{{a_{n}\over 40n}-2}.\] 

 We will first show that 
$2^{510na_{n-1}}< e^{2}\xi^{{a_n\over 40n}-4}.$
 Recall that $a_{n}=a_{n-1}(1000n)^{100}$, so $ \xi^{{a_n\over 40n}}=2^{{1\over 8}\cdot{1\over 40n}\cdot (a_{n-1}\cdot (1000n)^{100})}.$ 
 Notice that $e^{2}>\xi ^{-80}.$
 It follows that $e^{2}\xi^{{a_n\over 40n}-4}>2^{510na_{n-1}}.$

  We will now show that $a_{n+2}^{110n}<\xi ^{{a_n\over 40n}}.$
 Observe first that $\xi ^{{a_n\over 40n}}=2^{{a_n\over 320n}}$ and $a_{n+2}=a_{n}\cdot (1000(n+1))^{100}\cdot (1000(n+2))^{100}.$  
 Observe that $a_{n+2}<a_{n}^{2},$ for $n=1,2, 3, 4,5 ,6,7 $ it follows since $a_{1}$ is very large, and for bigger $n$ it follows since 
$a_{n}=a_{1}\cdot (\prod_{i=2}^{n}(1000i))^{1000}>a_{1}\cdot (1000n)^{100\cdot {n\over 2}-3}.$
 So it suffices to show that ${a_{n}}^{220n}<2^{{a_n\over 320n}},$ or equivalently that $\sqrt {a_{n}}<2^{{a_n\over 320n}\cdot {1\over 440n}},$ 
 Observe now that ${a_{n}\over 320n}\cdot {1\over 440n}>\sqrt {a_{n}}.$ Therefore it suffices to show that  
 $c<2^{c}$ where $c=\sqrt {a_{n}},$ and ithis is true for all $c\geq 1$.

 Hence we proved that  $a_{n+2}^{110n}<\xi ^{{a_n\over 40n}}.$

 As $\xi ^{2}=2-e$ it follows that  $2^{510na_{n-1}}a_{n+2}^{110n}<e^{2}\xi^{{a_n\over 40n}-4}\cdot \xi ^{{a_n\over 40n}}=e^{2}(2-e)^{{a_{n}\over 40n}-2}$ for every $n\geq 1.$
 It follows that $I$ is generated by $r_{m}<  e^{2}\xi^{m-2}$ homogeneous relations of degree $m$, for each $m>1.$
 By Proposition \ref{golod}, $A/I$ is an infinitely dimensional algebra.
\end{proof}

 We will now introduce a simple lemma.

\begin{lemma}\label{mine} Suppose that $r_{n}$ satisfy assumptions of Proposition \ref{golod} for each $i>1.$
Let $J$ be an arbitrary ideal generated by $s_{n}$ homogeneous  elements of degree $n$, with $s_{n}\leq r_{n}$  for each $n>1$. Then for every $m$ 
 \[dim _{K} A(m)/ J(m)>r_{m}-s_{m}.\] 
\end{lemma}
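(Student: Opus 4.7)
The plan is to argue by contradiction, augmenting $J$ by just enough homogeneous generators in degree $m$ to force $A(m)=I(m)$ for the enlarged ideal $I$, and then to extract a contradiction from the Golod--Shafarevich bound applied to $I$.

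Assume for contradiction that $\dim_{K} A(m)/J(m) \leq r_{m} - s_{m}$ for some fixed $m$. I would choose homogeneous $g_{1},\ldots ,g_{r_{m}-s_{m}}\in A(m)$ whose cosets span $A(m)/J(m)$, and let $I$ be the homogeneous ideal generated by the original $s_{n}$ generators of $J$ (over all $n$) together with the $g_{i}$. Counting by degree, $I$ is generated by at most $s_{n}\leq r_{n}$ elements of degree $n$ when $n\neq m$, and by at most $s_{m}+(r_{m}-s_{m})=r_{m}$ elements of degree $m$, so the sequence of numbers of homogeneous generators of $I$ is dominated by $(r_{n})$ and therefore satisfies the hypothesis of Proposition \ref{golod}. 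By construction $I(m)=J(m)+Kg_{1}+\cdots +Kg_{r_{m}-s_{m}}=A(m)$.

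Next I would apply Theorem \ref{r} to the ideal $I$. Setting $b_{n}=\dim_{K}A(n)/I(n)$ and $B(t)=\sum_{n}b_{n}t^{n}$, the recursion of Theorem \ref{r} is equivalent to the coefficient-wise inequality $(1-dt+R(t))B(t)\geq 1$ in $\mathbb{Z}[[t]]$, with $R(t)=\sum_{l\geq 2}r_{l}t^{l}$. The hypothesis $r_{l}\leq e^{2}(2-2e)^{l-2}$ lets one majorise $R(t)$ by $\widetilde{R}(t)=e^{2}t^{2}/(1-(2-2e)t)$, and a direct computation shows that the power series $(1-dt+\widetilde{R}(t))^{-1}$ has strictly positive coefficients in every degree; by coefficient-wise domination, $b_{n}>0$ for every $n\geq 0$. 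In particular $b_{m}>0$, contradicting $I(m)=A(m)$ and so yielding the required strict inequality $\dim_{K}A(m)/J(m)>r_{m}-s_{m}$.

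The main obstacle is one of interpretation rather than substance: Proposition \ref{golod} as stated only guarantees $\dim_{K}A/I=\infty$, which is logically compatible with $b_{m}=0$ at isolated $m$ and so is insufficient for the contradiction we need. One must instead use the sharper conclusion actually delivered by the proof of Proposition \ref{golod}, namely that the graded generating function $B(t)$ dominates a rational series all of whose coefficients are strictly positive in every degree. Once that graded form is extracted from the proof, the contradiction above closes the lemma at once.
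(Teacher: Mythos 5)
Your construction is exactly the paper's: assume $\dim_K A(m)/J(m)\leq r_m-s_m$, enlarge $J$ to an ideal $I$ by adjoining at most $r_m-s_m$ homogeneous degree-$m$ elements whose cosets span $A(m)/J(m)$, note that $I$ has at most $r_n$ generators in each degree $n$ and that $I(m)=A(m)$. Where you diverge is in how the contradiction is extracted, and here your stated ``main obstacle'' is not actually an obstacle. You claim that the conclusion of Proposition \ref{golod} ($\dim_K A/I=\infty$) is compatible with $b_m=0$ at an isolated $m$; it is not. Since $A$ is generated in degree one and $I$ is a two-sided ideal, $I(m)=A(m)$ forces $A(n)=A(n-m)A(m)\subseteq I$ for every $n\geq m$, so $A/I$ is spanned by $A(0)+\cdots+A(m-1)$ and is finite dimensional. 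This one-line observation is precisely how the paper closes the proof (``$A(m)/I(m)=0$, so $A/I$ is nilpotent, contradicting infinite dimensionality''), so the stated Proposition \ref{golod} suffices and no appeal to its internal proof is needed. Your replacement argument --- rewriting Theorem \ref{r} as the coefficientwise inequality $(1-dt+R(t))B(t)\geq 1$, majorising $R$ by $e^2t^2/(1-(2-2e)t)$, and checking that the resulting rational series has strictly positive coefficients --- is mathematically sound (it is the classical Hilbert-series form of Golod--Shafarevich and does give $b_n>0$ for all $n$), but it amounts to reproving the proposition rather than using it, and the premise that this detour is necessary is mistaken.
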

 \begin{proof} Observe first that by Proposition \ref{golod}, $A/I$ is an infinitely dimensional algebra, for every ideal $I$ generated by $r_{i}$ homogeneous elements of degree $i,$ for every $i>1$. 
 
 Suppose on the contrary that $\dim _{K}A(m)/ J(m)\leq r_{m}-s_{m}.$ 
 Observe that there is ideal $I$ generated by at most $r_{i}$ relations of degree $i$ for each $i\geq 2$ and such that 
  $J\subseteq I.$ Moreover, in the degree $m$ ideal $I$ is generated by $s_{m}$ elements from $J$ and by $r_{m}-s_{m}$ elements from $A(m)$ which correspond to the basis of  $A(m)/ J(m)$ (so they are equal to elements of this basis modulo $J(m)$). Then $A(m)/I(m)=0,$
 and so $A/I$ is nilpotent, a contradiction with assumption that $A/I$ is infinitely dimensional. 
\end{proof}
\begin{lemma}\label{GS}
 There exist natural numbers $0<a_{1}<a_{2}<a_{3}< \ldots $ such that  $200(i+1)^{2}a_{i}$ divides $a_{i+1}$ for every $i\geq 1,$
 which satisfy the following condition: if $J$ is an arbitrary ideal in $A$ generated by elements from arbitrary sets $E(Z_{n})\subseteq \sum_{i={a_{n}\over 40n}}^{20na_{n}}A(i)$, with cardinality of $E(Z_{n})$ less than $2^{505na_{n-1}}a_{n+2}^{110n}$ for all $n\geq 1,$ 
 then    
\[\dim _{K} (A(t_{i+1}a_{i+1})/A(t_{i+1}a_{i+1})\cap J)>4\cdot (2^{500(i+1)a_{i}}a_{i+3}^{100(i+1)})+2.\]

 In particular,  if $G_{i+1}$ is as in Lemma \ref{important3}
 then \[\dim _{K} (A(t_{i+1}a_{i+1})/A(t_{i+1}a_{i+1})\cap J)>4\cdot \dim _{K} G_{i+1} +2.\]  
\end{lemma}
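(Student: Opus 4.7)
The plan is to combine Lemma~\ref{as} with Lemma~\ref{mine}. First I would fix the sequence $a_1<a_2<\cdots$ supplied by Lemma~\ref{as}: by construction the sequence $r_l := 2^{510 n a_{n-1}} a_{n+2}^{110 n}$ for $l$ in the window $(a_n/40n,\,20n a_n]$ (and $r_l=0$ elsewhere) satisfies the Golod--Shafarevich bound $r_l \leq e^2(2-2e)^{l-2}$ of Proposition~\ref{golod}. Splitting each element of $E(Z_n)$ into its homogeneous components produces at most $|E(Z_n)| < 2^{505 n a_{n-1}} a_{n+2}^{110 n}$ homogeneous generators in each degree of that window, so if $s_l$ denotes the total number of homogeneous generators of $J$ of degree $l$, then $s_l \leq r_l$ for all $l$ and Lemma~\ref{mine} gives
\[
\dim_K A(l)/J(l) > r_l - s_l.
\]

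Next I would apply this with $l = m := t_{i+1} a_{i+1}$. Since $\tfrac12 < t_{i+1} < 1$ (the inductive bound from the proof of Lemma~\ref{as}), $m$ lies in $(\tfrac12 a_{i+1},\, a_{i+1})$. The crucial observation is that only the index $n = i+1$ contributes generators of this degree: the growth $a_{n+1}\geq 200(n+1)^2 a_n$ forces $20 n a_n \ll a_{i+1}/2$ for $n \leq i$ and $a_n/40n \gg a_{i+1}$ for $n \geq i+2$, so for those $n$ the window $(a_n/40n,\,20 n a_n]$ misses the interval containing $m$ entirely. Hence $s_m \leq |E(Z_{i+1})| < 2^{505(i+1) a_i} a_{i+3}^{110(i+1)}$, and subtracting yields
\[
\dim_K A(m)/J(m) > a_{i+3}^{110(i+1)}\, 2^{505(i+1) a_i}\bigl(2^{5(i+1)a_i}-1\bigr).
\]
The slack factor $a_{i+3}^{10(i+1)}\cdot 2^{5(i+1) a_i}(2^{5(i+1)a_i}-1)$ makes the right-hand side dominate $4\cdot 2^{500(i+1) a_i} a_{i+3}^{100(i+1)} + 2$ easily. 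Since $J$ is homogeneous we have $A(m)\cap J = J(m)$, giving the first displayed inequality. The ``in particular'' clause then follows because Theorem~\ref{S} provides $\dim_K G_{i+1} < 2^{500(i+1)a_i} a_{i+3}^{100(i+1)}$.

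The main obstacle I anticipate is the degree-window disjointness that singles out $n=i+1$ as the unique contributing index; this relies on the super-exponential growth of $(a_n)$ built into Lemma~\ref{as} together with the elementary inductive estimate $t_{i+1}\in(\tfrac12,1)$. A minor bookkeeping point is that the generators drawn from $E(Z_n)$ need not be homogeneous, but replacing $J$ by its homogeneous closure (equivalently, decomposing each generator into graded pieces) only multiplies the count by a polynomial factor that is absorbed by the exponential gap between $2^{510na_{n-1}}$ and $2^{505na_{n-1}}$; no new ideas beyond what already appears in Lemmas~\ref{as} and~\ref{mine} are needed.
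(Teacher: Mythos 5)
Your proposal is correct and follows essentially the same route as the paper: both feed $J$ into Lemma \ref{mine} with a Golod--Shafarevich-admissible comparison sequence $r_l$ supplied by (the computation in the proof of) Lemma \ref{as}, and read off the codimension at $m=t_{i+1}a_{i+1}$ from $r_m-s_m$. The only difference is minor bookkeeping: the paper pads $J$ to an auxiliary ideal $I$ with exactly $4\cdot 2^{500(i+1)a_{i}}a_{i+3}^{100(i+1)}+2$ extra homogeneous relations in degree $t_{n}a_{n}$, so that $r_m-s_m$ equals the target on the nose, whereas you take $r_m$ to be the full budget $2^{510na_{n-1}}a_{n+2}^{110n}$ on each window and win by the slack between the exponents $505$ and $510$ (together with your observation that the growth of the $a_n$ isolates the degree $t_{i+1}a_{i+1}$ inside the window for $n=i+1$, a point the paper's exact count makes unnecessary).
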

\begin{proof} Define $J$ to be an ideal generated by elements from sets $E(Z_{n})$ for all $n$. Let $s_{m}$ 
 be the number of generating relations of degree $m$ in $J$. 
Let $I$ be an ideal generated by relations from sets $E(Z_{n})$ and also by 
$4\cdot (2^{500na_{n-1}}a_{n+2}^{100n})+2$ homogeneous relations of degree $t_{n}a_{n}$ for each $n\geq 1$.

 Notice that $(4\cdot (2^{500na_{n-1}}a_{n+2}^{100n})+2)+2^{505na_{n-1}}a_{n+2}^{110n}<2^{510na_{n-1}}a_{n+2}^{110n}.$ 
 Therefore $I$ satisfies assumptions of Lemma \ref{as} (since $ {1\over 2}<t_{n}\leq 1$). By Lemma \ref{as} $A/I$ is infinitely dimensional algebra. Moreover, by the proof of Lemma \ref{as}, $I$ is generated by $r_{m}$ homogeneous relations of degree $m$ for all $m\geq 0$ , with $r_{m}$ satisfying assumptions of Proposition \ref{golod}.

  By Lemma \ref{mine} for this $I$ and $J$  we have $dim _{K} A(m)/ J(m)>r_{m}-s_{m}$.
 For $m=t_{i+1}a_{i+1}$ we get that $\dim _{K} (A(t_{i+1}a_{i+1})/A(t_{i+1}a_{i+1})\cap J>r_{t_{i+1}a_{i+1}}-s_{t_{i+1}a_{i+1}}.$

 By construction of $I$,  $r_{t_{n+1}a_{i+1}}-s_{t_{n+1}a_{i+1}}= 4\cdot (2^{500(i+1)a_{i}}a_{i+3}^{100(i+1)})+2.$
  Therefore, $\dim _{K} (A(t_{i+1}a_{i+1})/A(t_{i+1}a_{i+1})\cap J) > 4\cdot (2^{500(i+1)a_{i}}a_{i+3}^{100(i+1)})+2.$
 Since $ dim _{K} G_{i+1}\leq 2^{500(i+1)a_{i}}a_{i+3}^{100(i+1)}$, we get 
  $\dim _{K} (A(t_{i+1}a_{i+1})/A(t_{i+1}a_{i+1})\cap J) >4\cdot \dim _{K} G_{i+1}+2.$
\end{proof} 

\section{Applying Golod-Shafarevich theorem and Hilbert  Nullstellensatz theorem.}\label{Hil}

 In this section we first give  supporting lemmas, and then prove Theorem \ref{important3}, which will, roughly speaking, show that linear spaces $S_{i}$ from Theorem \ref{S} are contained in $U_{i}\otimes _{K} A+A\otimes _{K} U_{i},$ for suitable linear spaces $U_{i}\subseteq A.$

We begin with the following lemma,  which closely resembles  Lemma $4.3$ \cite{sb}   and has the same proof. 
\begin{lemma}\label{Hilbert} Let $K$ be an algebraically closed field, let $m$ be a natural number, and let $T\subseteq A(m)$  
  and  $Q\subseteq A(m)\otimes _{K} A(m)$ be $K$-linear spaces such that $\dim _{K} T+4\dim_{K} Q\leq \dim _{K} A(m)-2$.

  Then there exists a linear space $F\subseteq A(m)$ such that $T\subseteq F$ and 
 $Q\subseteq F\otimes _{K} A(m)+A(m)\otimes _{K} F$, and $\dim_{K} A(m)=1+\dim_{K} U$. 
\end{lemma}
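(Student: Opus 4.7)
The plan is to parametrise the candidate codimension--$1$ subspaces $F \subseteq A(m)$ by nonzero linear functionals $\lambda \in A(m)^{*}$, with $F = \ker \lambda$, and to translate the two requirements on $F$ into a linear system and a quadratic system on $\lambda$. An application of the projective dimension theorem together with Hilbert's Nullstellensatz over the algebraically closed field $K$ then supplies a solution.

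First, the containment $T \subseteq F$ is equivalent to $\lambda|_{T} = 0$, which cuts out a linear subspace $L \subseteq A(m)^{*}$ of dimension $\dim A(m) - \dim T$. Second, picking any $e \in A(m)$ with $\lambda(e) = 1$, the decomposition $A(m) = \ker \lambda \oplus K e$ yields
\[ A(m) \otimes A(m) = \bigl(F \otimes A(m) + A(m) \otimes F\bigr) \oplus K(e \otimes e), \]
so an element $q \in A(m) \otimes A(m)$ lies in $F \otimes A(m) + A(m) \otimes F$ if and only if the coefficient of $e \otimes e$ in $q$ vanishes, that is $(\lambda \otimes \lambda)(q) = 0$. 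Applying this criterion to a basis $q_{1}, \dots, q_{\dim Q}$ of $Q$, the condition $Q \subseteq F \otimes A(m) + A(m) \otimes F$ becomes exactly $\dim Q$ homogeneous quadratic equations in $\lambda$.

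Now I would work inside the projective space $\mathbb{P}(L)$, which has projective dimension $\dim A(m) - \dim T - 1$. The equations $(\lambda \otimes \lambda)(q_{j}) = 0$ define quadric hypersurfaces in $\mathbb{P}(A(m)^{*})$, and by the projective dimension theorem, intersecting a projective variety with a hypersurface drops the dimension by at most one and never produces the empty set while the running dimension is nonnegative. Hence after intersecting $\mathbb{P}(L)$ with all $\dim Q$ quadrics, the resulting projective subvariety has dimension at least $\dim A(m) - \dim T - 1 - \dim Q$. The hypothesis $\dim T + 4\dim Q \leq \dim A(m) - 2$ gives $\dim A(m) - \dim T - 1 - \dim Q \geq 3\dim Q + 1 \geq 1$, so this subvariety is nonempty, and because $K$ is algebraically closed it has a $K$--rational point $[\lambda]$ by Hilbert's Nullstellensatz. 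Setting $F = \ker \lambda$ delivers a codimension--$1$ subspace satisfying $T \subseteq F$ and $Q \subseteq F \otimes A(m) + A(m) \otimes F$.

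The only real obstacle is the dimension count in the third step: one must check that the dimension drop from each quadric is at most one, which is exactly the content of the projective dimension theorem and requires $K$ to be algebraically closed. The bound in the hypothesis leaves comfortable slack---the factor $4$ is much larger than what the count strictly requires---so there is no need to analyse how the individual quadrics interact, and the extra room is presumably reserved for convenience in the inductive construction of the spaces $U_{i}$ later in the paper.
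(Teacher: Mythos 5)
Your proposal is correct and is essentially the argument the paper has in mind: the paper only cites Lemma 4.3 of \cite{sb}, whose content is exactly your reduction — parametrise codimension-one subspaces $F=\ker\lambda$, turn $T\subseteq F$ into linear conditions and $Q\subseteq F\otimes A(m)+A(m)\otimes F$ into the quadratic conditions $(\lambda\otimes\lambda)(q_j)=0$, and produce a nontrivial solution over the algebraically closed field by the projective dimension count / Nullstellensatz. (Only make sure to state the intersection step as requiring the running dimension to be at least $1$, not merely nonnegative, before meeting the next quadric — your slack $3\dim_K Q+1\geq 1$ covers this anyway.)
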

\begin{proof} The same proof as in Lemma $4.3$ in \cite{sb} can be used.  Notice that the fact that $m=2^{n}$ in Lemma $4.3$ \cite{sb} is not used in the proof, so the proof works for all $m$. Also
 in the proof  variable $Z$ can be put to be zero to make it easier. The main idea of the proof is to use the Hilbert  Nullstellensatz theorem.
\end{proof}

\begin{lemma}\label{Z} 
Let $i$ be a natural number, and $a_{i+1}<a_{i+2}$ be natural numbers such that $200(i+1)^{2}a_{i+1}$ divides $a_{i+2}.$
Let $U_{i+1}, F_{i+1}, F'_{i+1}$ be linear spaces, and let $U_{i+1}\subseteq A(a_{i+1}),$  $m_{i+1}\in M(a_{i+1})$ be such that $U_{i+1}+Km_{i+1}=A(a_{i+1}).$
 Moreover, let $F_{i+1}\subseteq A(a_{i+1})\otimes \sum_{j={a_{i+1}\over 10(i+1)}}^{10(i+1)a_{i+1}}A(j),$ and $F'_{i+1}\subseteq \sum_{j={a_{i+1}\over 10(i+1)}}^{10(i+1)a_{i+1}}A(j)\otimes A(a_{i+1}).$
 Then there are homogeneous linear spaces $Z'_{i+1}, Z''_{i+1}\subseteq \sum_{j={a_{i+1}\over 10(i+1)}}^{10(i+1)a_{i+1}}A(i)$ with $\dim _{k}Z'_{i+1}\leq \dim K F_{i+1},$ $\dim _{k}Z''_{i+1}\leq \dim K F'_{i}$ and such that 
\[Q(F_{i+1}, a_{i+1}, a_{i+2})+U\otimes A\subseteq U\otimes A+ A\otimes M(Z'_{i+1}, a_{i+1}, a_{i+2}) ,\] 
\[Q'(F'_{i+1}, a_{i+1}, a_{i+2})+A\otimes U\subseteq A\otimes U+M(Z''_{i+1}, a_{i+1}, a_{i+2})\otimes A\] 
where $U=\sum_{k=1}^{\infty } (A(ka_{i+2}-a_{i+1})+A(ka_{i+2}))U_{i+1}A.$
\end{lemma}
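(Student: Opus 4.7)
My plan is to extract, from each element of $F_{i+1}$, the obstruction preventing it from lying in $U_{i+1}\otimes A$, and to package these obstructions into $Z'_{i+1}$. Since $U_{i+1}+Km_{i+1}=A(a_{i+1})$, this sum is direct unless $U_{i+1}=A(a_{i+1})$ (in which case $Z'_{i+1}=0$ works trivially). Assuming the direct sum, let $\pi\colon A(a_{i+1})\to K$ be the projection with $\ker\pi=U_{i+1}$ and $\pi(m_{i+1})=1$. For each $\alpha$ in the range $[a_{i+1}/10(i+1),\,10(i+1)a_{i+1}]$, form $\phi_\alpha=\pi\otimes\mathrm{id}_{A(\alpha)}\colon A(a_{i+1})\otimes A(\alpha)\to A(\alpha)$, and let $\phi$ be the direct sum over all such $\alpha$. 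I then set $Z'_{i+1}=\phi(F_{i+1})$, a homogeneous subspace of $\sum_j A(j)$ supported in the required range, with $\dim_K Z'_{i+1}\le\dim_K F_{i+1}$ since $\phi$ is linear.

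The computational heart of the proof is the identity that, for any homogeneous $f\in F_{i+1}$ of right-degree $\alpha$, writing $f=\sum_p a_p\otimes b_p$ with each $a_p=u_p+\lambda_p m_{i+1}$ ($u_p\in U_{i+1}$), one has $f=f_U+m_{i+1}\otimes\phi(f)$ with $f_U=\sum_p u_p\otimes b_p\in U_{i+1}\otimes A(\alpha)$ and $\phi(f)=\sum_p\lambda_p b_p\in A(\alpha)$. Substituting into a generic generator $(c\otimes d)\,f\,(c'\otimes d')$ of $Q(F_{i+1},a_{i+1},a_{i+2})$, with $c\in A(ka_{i+2}-a_{i+1})+A(ka_{i+2})$, $d\in A(j)$ for $j\in W(a_{i+1},a_{i+2})$, and $c',d'\in A$, I obtain
\[
(c\otimes d)\,f_U\,(c'\otimes d')\;+\;cm_{i+1}c'\otimes d\,\phi(f)\,d'.
\]
The first summand lies in $\bigl((A(ka_{i+2}-a_{i+1})+A(ka_{i+2}))U_{i+1}A\bigr)\otimes A\subseteq U\otimes A$ by definition of $U$. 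Since the condition $j\in W(a_{i+1},a_{i+2})$ is precisely the disjointness condition defining $M(\,\cdot\,,a_{i+1},a_{i+2})$, the second summand lies in $A\otimes A(j)\phi(f)A\subseteq A\otimes M(\phi(f),a_{i+1},a_{i+2})\subseteq A\otimes M(Z'_{i+1},a_{i+1},a_{i+2})$. Summing over generators establishes the first inclusion.

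The second inclusion follows by the mirror construction: set $\phi'_\alpha=\mathrm{id}_{A(\alpha)}\otimes\pi$, define $Z''_{i+1}=\phi'(F'_{i+1})$, and rerun the argument with the left and right factors swapped. The one point that needs care is the $k=0$ boundary in the defining sum for $Q$, where $c\in A(0)=K$ is a scalar; here the first summand becomes $u_pc'\otimes d b_p d'\in U_{i+1}A\otimes A$, which fits into $U\otimes A$ once one reads $U$ as including the $k=0$ term $A(0)U_{i+1}A=U_{i+1}A$ of its defining sum. Aside from this bookkeeping and the immediate check that $\phi(f)$ remains in the stated degree range (which is forced by homogeneity of $F_{i+1}$), the argument is essentially just the projection construction combined with a definition chase.
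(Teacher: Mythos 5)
Your proof is correct and follows essentially the same route as the paper: there, the decomposition $U_{i+1}+Km_{i+1}=A(a_{i+1})$ is used to write each basis element $g$ of $F_{i+1}$ as $\alpha\, m_{i+1}\otimes z(g)$ plus a term in $U_{i+1}\otimes A$, the space $Z'_{i+1}$ is taken to be the span of the $z(g)$'s (your $\phi(F_{i+1})$), and the inclusions are then read off from the definitions of $Q$ and $M$, with the mirror argument for $Q'$. Your explicit substitution into the generators, including the remark about the $k=0$ term of $U$, merely spells out the definition chase the paper leaves implicit.
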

\begin{proof} Using the fact that $U_{i+1}+Km_{i+1}=A(a_{i+1})$, it follows that for every $g\in F_{i+1}$ there is $z(g)\in \sum_{j={a_{i+1}\over 10(i+1)}}^{10(i+1)a_{i+1}}A(j),$ and $\alpha \in K$ such that either $g\in \alpha m_{i+1}\otimes z(g)+U_{i+1}\otimes A.$ 
  Let $g_{1}, \ldots g_{m}$ be a basis of $F_{i+1}$. Let $Z'_{i+1}$ be the linear space spanned by elements  $z(g_{1}), z(g_{2}), \ldots , z(g_{m}).$
   By the definition of $ Q(F_{i+1}, a_{i+1}, a_{i+2})$ and of $M(Z_{i+1}, a_{i+1}, a_{i+2})$ it follows that  
  $Q(F_{i+1}, a_{i+1}, a_{i+2})+U\otimes A\subseteq U\otimes A+ A\otimes M(Z'_{i+1}, a_{i+1}, a_{i+2})$. The proof that $Q'(F'_{i+1}, a_{i+1}, a_{i+2})+A\otimes U\subseteq M(Z''_{i+1}, a_{i+1}, a_{i+2})\otimes A+A\otimes U$ is similar.
\end{proof}

 We are now ready to prove the main result of this chapter. 
\begin{theorem}\label{important3}
 Suppose that we are given 
natural numbers $0<a_{1}<a_{2}<a_{3}< \ldots $, which satisfy assumptions of Lemma \ref{GS} and such that $200(i+1)^{2}a_{i}$ divides $a_{i+1}$  for every $i$.
  Suppose that for all $i\geq 1$ we are given homogeneous linear spaces $G_{i}\subseteq  A(a_{i})\otimes A(a_{i}),$ 
 $F _{i}\subseteq A(a_{i})\otimes \sum_{j>{a_{i}\over 10i}}^{10ia_{i}}A(j),$ $ F'_{i}\subseteq   \sum_{j>{a_{i}\over 10i}}^{10a_{i}i}A(j)\otimes A(a_{i})$  with $\dim _{K} G_{i}, \dim _{K} F_{i}, \dim _{K}F'_{i} <2^{500ia_{i-1}}a_{i+2}^{100i},$
 $F_{1}=0, G_{1}=0, G_{2}=0$. Then there are linear spaces $U_{i}$ and monomials $m_{i}$ such that for all $i\geq 1$ we have 
\begin{itemize}
\item[1.]  $U_{i}\oplus Km_{i}=A(a_{i})$ and   $A(a_{i})\cap \sum_{j=1}^{i-1}B_{a_{j+1}}(U_{j})\subseteq U_{i},$  
\item[2.] $G(i)\subseteq U_{i}\otimes A(a_{i})+A(a_{i})\otimes U_{i},$
\end{itemize}
Moreover there are homogeneous linear spaces $Z_{i}\subseteq \sum_{i={a_{i}\over 10i}}^{10ia_{i}}A(i)$ such that $\dim _{Z_{i}}\leq \dim _{K} F_{i}+\dim _{K} F'_{i},$ and such that 
\begin{itemize}
\item[3.] $Q(F_{j},a_{j},a_{j+1}), Q'(F'_{j},a_{j}, a_{j+1})\subseteq D_{i}\otimes A+A\otimes D_{i}$ 
 where $D_{i}=\sum_{j=1}^{i}B_{a_{j+1}}( U_{j})+M(Z_{j},a_{j}, a_{j+1}).$
\item[4.] $A(a_{i})\cap D_{i-1}\subseteq U_{i}.$
\end{itemize}
\end{theorem}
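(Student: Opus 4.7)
The plan is to argue by induction on $i$. The base cases $i=1,2$ are trivial: $G_1=G_2=F_1=F'_1=0$, so condition 2 is vacuous and one may take $U_1,U_2$ to be the kernels of any linear functional sending one chosen monomial $m_i$ to $1$; condition 3 is handled by Lemma \ref{Z} once the respective $U_i$ is chosen, yielding $Z_i = Z'_i+Z''_i$ and condition 4 is empty at these stages. I then fix the convention that, at every later step, once $U_i$ is produced, the corresponding $Z_i$ is defined by applying Lemma \ref{Z} to $(U_i,F_i,F'_i)$, which makes condition 3 immediate because $D_i\supseteq B_{a_{j+1}}(U_j)$ and $D_i\supseteq M(Z_j,a_j,a_{j+1})$ for every $j\le i$.

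The heart of the induction is the construction of $U_i$. Assume $U_1,\dots,U_{i-1}$ and $Z_1,\dots,Z_{i-1}$ satisfying 1--4 are in hand. I first invoke Lemma \ref{U} to obtain $\delta_i\colon A[a_i]\to A$, which is surjective onto $A(t_ia_i)$ and kills $A[a_i]\cap\sum_{j<i}B_{a_{j+1}}(U_j)$. Next I apply Lemma \ref{E(f)} to every element of a chosen basis of each $Z_j$ ($j<i$), producing finite sets $E(Z_j)\subseteq \sum_{l>a_j/(40j)}^{20ja_j}A(l)$ whose total cardinalities fit the bound $2^{505ja_{j-1}}a_{j+2}^{110j}$ appearing in Lemma \ref{GS} (since $\dim Z_j\le \dim F_j+\dim F'_j < 2\cdot 2^{500ja_{j-1}}a_{j+2}^{100j}$ and each $E(f)$ has cardinality $<2a_j^{4}\cdot 2^{2a_{j-1}}$). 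Let $J\lhd A$ be the ideal generated by $\bigcup_{j<i}E(Z_j)$ and set $V=A(t_ia_i)/(A(t_ia_i)\cap J)$. Composing $\delta_i$ with the projection to $V$ gives a surjection $\bar\delta_i\colon A(a_i)\to V$ which annihilates all of $A(a_i)\cap D_{i-1}$: the summands $A(a_i)\cap B_{a_{j+1}}(U_j)$ are in $\ker\delta_i$ by Lemma \ref{U}, while $\delta_i\bigl(A(a_i)\cap M(Z_j,a_j,a_{j+1})\bigr)\subseteq\langle E(Z_j)\rangle\subseteq J$ by Lemma \ref{E(f)}.

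By Lemma \ref{GS}, $\dim_K V>4\dim_K G_i+2$, so the dimension hypothesis of Lemma \ref{Hilbert} is met (with $T=0$) for $Q:=(\bar\delta_i\otimes\bar\delta_i)(G_i)\subseteq V\otimes V$; the proof of Lemma \ref{Hilbert} uses only the finite dimensionality of the ambient space over an algebraically closed field, so it transports verbatim from $A(m)$ to $V$. This yields a codimension-one subspace $\bar F\subseteq V$ with $Q\subseteq \bar F\otimes V+V\otimes \bar F$. I then define $U_i:=\bar\delta_i^{-1}(\bar F)$ and pick any monomial $m_i\in A(a_i)\setminus U_i$. Surjectivity of $\bar\delta_i$ makes $U_i$ codimension one in $A(a_i)$, giving $U_i\oplus Km_i=A(a_i)$; the vanishing of $\bar\delta_i$ on $A(a_i)\cap D_{i-1}$ delivers condition 4; and the canonical isomorphism $A(a_i)/U_i\cong V/\bar F$ (both one-dimensional) translates the inclusion for $Q$ back into $G_i\subseteq U_i\otimes A(a_i)+A(a_i)\otimes U_i$, settling condition 2.

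The main obstacle is the dimension bookkeeping that ensures the hypothesis of Lemma \ref{Hilbert} really holds in the quotient $V$: one must confirm that the cardinalities of the $E(Z_j)$'s stay within the Golod--Shafarevich admissible range of Lemma \ref{GS}, and that Lemma \ref{E(f)} simultaneously forces $\bar\delta_i$ to kill all of $A(a_i)\cap D_{i-1}$ so that $T=0$ can be used. Once these are verified, applying Hilbert's Nullstellensatz through Lemma \ref{Hilbert} creates the one-dimensional slack needed to fit $G_i$ into the required form, and the rest of the argument is essentially forced by the constructions of $\delta_i$, $E(Z_j)$, and Lemma \ref{Z}.
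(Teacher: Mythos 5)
Your argument is correct and rests on exactly the same pillars as the paper's proof: induction on $i$, the maps $\delta_i$ of Lemma \ref{U}, the sets $E(Z_j)$ of Lemma \ref{E(f)} generating an ideal $J$, the Golod--Shafarevich bound of Lemma \ref{GS}, the Nullstellensatz Lemma \ref{Hilbert}, and Lemma \ref{Z} for producing $Z_i$; the one genuine difference is where Lemma \ref{Hilbert} is invoked. The paper stays inside $A(a_{i+1})$: it takes a complement $V$ of $D_i\cap A(a_{i+1})$, uses $\delta_{i+1}$, the sets $E(Z_n)$ and Lemma \ref{GS} to get $\dim_K V>4\dim_K G_{i+1}+2$, hence $\dim_K(A(a_{i+1})\cap D_i)+4\dim_K G_{i+1}\le \dim_K A(a_{i+1})-2$, and then applies Lemma \ref{Hilbert} exactly as stated with $T=A(a_{i+1})\cap D_i$ and $Q=G_{i+1}$; the clause $T\subseteq F$ of that lemma is what delivers conditions 1 and 4. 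You instead push $G_i$ into the quotient $V=A(t_ia_i)/(A(t_ia_i)\cap J)$ via $\bar\delta_i$, apply the lemma there with $T=0$, and pull the hyperplane back, recovering conditions 1, 2, 4 from $\ker\bar\delta_i\subseteq U_i$ and from the fact that the kernel of the tensor square of the surjection $A(a_i)\to V/\bar F$ is $U_i\otimes A(a_i)+A(a_i)\otimes U_i$; this is sound, and your dimension bookkeeping matches the paper's. Two points should be tightened. First, Lemma \ref{Hilbert} is stated only for spaces of the form $A(m)$, so ``transports verbatim to $V$'' is an extra (true, purely linear-algebraic, but unproved here) claim about the proof in \cite{sb}; you can avoid it altogether by applying the lemma verbatim in $A(t_ia_i)$ with $T=A(t_ia_i)\cap J$ and $Q=(\delta_i\otimes\delta_i)(G_i)$, since Lemma \ref{GS} gives exactly $\dim_K T+4\dim_K Q<\dim_K A(t_ia_i)-2$, and then set $U_i=A(a_i)\cap\delta_i^{-1}(F)$. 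Second, make explicit that the hypothesis of Lemma \ref{E(f)}, namely $M(f,a_j,a_{j+1})\cap A(a_{j+1+k})\subseteq U_{j+1+k}$, is supplied by the inductive condition 4 (as the paper verifies), and that you apply Lemma \ref{E(f)} to a homogeneous basis of each $Z_j$, so each basis element lies in a single $A(\alpha)$ with ${a_j\over 10j}\le\alpha\le 10ja_j$. With these details added your proof is complete and is essentially a repackaging of the paper's argument.
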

\begin{proof} We proceed by induction on $i$. For $i=1$, $F_{1}, G_{1}=0$ so we can take $U_{1}$ to be any linear subspace of $A(a_{1})$ of codimension $1$.
 Then for an appriopriate monomial $m_{1}$ we have $U_{1}+Km_{1}=A(a_{1}).$ Then we can set $Z_{1}=0.$

 Suppose that the result holds for all numbers not exceeding $i$, we will show that it holds for $i+1.$
  Assume that  we already constructed sets $U_{1}, \ldots , U_{i}$, and sets $Z_{1}, \ldots ,Z_{i}$.
 We will construct set $U_{i+1}$ and then set $Z_{i+1}.$
 
Let $V$ be a linear subspace of $A(a_{i+1})$ such that $V\oplus (D_{i}\cap A(a_{i+1}))=A(a_{i+1}).$
 
We will first  show that \[\dim _{K} (D_{i}\cap A(a_{i+1}))< \dim _{K} A(a_{i+1})- 4\dim _{K} G_{i+1}-2.\]
 We will prove an equivalent statement that 
 $\dim _{K} V>4\dim _{K} G_{i+1}+2$.
 
Let $\delta _{i+1}:A(a_{i+1})\rightarrow A(a_{i+1})$ be as in Lemma \ref{U}. Then $\delta _{i+1}(U')=0,$ where $U'=(\sum_{j=1}^{i}B_{a_{j+1}}( U_{j}))\cap A(a_{i+1}).$  Denote $M'=(\sum_{j=1}^{i} M(Z_{j},a_{j}, a_{j+1}))\cap A(a_{i+1}),$
 so $D_{i}\cap A(a_{i+1})=U'+M'$ 
 Then $\delta _{i+1}(V)+ \delta _{i+1}(M')=\delta _{i+1}(A(a_{i+1}))=A(a_{i+1}\cdot t _{i+1})$.  
 It follows that $\dim _{K}V\geq \dim _{K}\delta _{i+1}(V)\geq \dim _{K} A(a_{i+1}\cdot t _{i+1})-\dim _{K}\delta _{i+1}(M'),$ where $t_{i+1}$ is as in Lemma \ref{U}.

 Fix $n\leq i,$ and let $f\in Z_{n},$ and denote $F=M(f, a_{n}, a_{n+1}).$ Observe that $F\subseteq D_{n+k}$ for every $k\geq 0,$ hence $A(a_{n+k+1})\cap F\subseteq A(a_{n+k+1})\cap D_{n+k},$ for all $k\geq 0.$ 
If $n+k+1\leq i,$ then by the inductive assumption   $A(a_{n+1+k})\cap  D_{n+k}\subseteq U_{n+k+1},$ and so $A(a_{n+1+k})\cap  F\subseteq U_{n+k+1}.$ Therefore $f$ satisfies assumptions of Lemma \ref{E(f)}.
Therefore any element $f$ from sets $Z_{n}$ satisfy assumptions of Lemma \ref{E(f)}. Let $f_{1}, f_{2}, \ldots , f_{k'}$ be a basis of $Z_{n}$,  We can apply Lemma \ref{E(f)} several times for  $f=f_{1}$, $f=f_{2}, \ldots , f=f_{k'}.$ 
 Let $E(Z_{n})=\bigcup_{j=1}^{k'}E(f_{j})$ where $E(f_{j}) $ is as in Lemma \ref{E(f)} applied for $f=f_{j}$, then  
 the  cardinality of $E(Z_{n})$ does not exceede $2a_{n}^{4}\cdot 2^{2a_{n-1}}\cdot \dim _{K} Z_{n}<2^{505a_{n-1}}a_{n+2}^{110i}$. Moreover,  if $i+k+1\leq i+1$ then 
\[\delta _{n+k+1}(F\cap A[a_{n+k+1}])\subseteq  <E(Z_{n})>\]
 (where $<E(Z_{n})>$ is the ideal generated in $A$ by elements from $E(Z_{n})$). 
 In particular we have $\delta _{i+1}(F'\cap A[a_{i+1}])\subseteq \sum_{n=1}^{i} <E(Z_{n})>,$ where we denote $F'=\sum_{j=1}^{k'}M(f_{j}, a_{n}, a_{n+1}).$ Such sets $E(Z_{n})$ can be constructed for all $n\leq i.$
 It follows that  $\delta _{i+1}(D_{i}\cap  A[a_{i+1}])\subseteq  <E(Z_{n})>$ (because $\delta _{i+1}(U')=0$ by Lemma \ref{U}).
 Let $J$ be the ideal of $A$ generated by all these sets $E(Z_{n})$ for all $1\leq n\leq i.$


By Lemma \ref{E(f)}, $E(Z_{n})\subseteq \sum_{i>{a_{n} \over 40n}}^{20na_{n}}A(i)$. It follows that $I$ is generated by elements from sets $E(Z_{n})\subseteq \sum_{i>{a_{n} \over 40n}}^{20na_{n}}A(i)$ with 
  the cardinality of $E(Z_{n})$ less than $2^{505a_{n-1}}a_{n+2}^{110i}$ for $n=1,2, \ldots ,i.$
 By Theorem \ref{GS}, \[\dim _{K}(A(t_{i+1}a_{i+1})/A(t_{i+1}a_{i+1})\cap I)>4\dim _{K} G_{i+1}+2.\]
 Therefore, we obtain  $dim _{K} V\geq \dim _{K} A(a_{i+1}\cdot t _{i+1})-\dim _{K}\delta _{i+1}(M')=\dim _{K} A(t_{n+1}a_{i+1})/A(t_{n+1}a_{i+1})\cap J.$
It follows that $\dim _{K} V> 4 \dim _{K}G_{i+1}+2,$ and so $\dim _{K} (D_{i}\cap A(a_{i+1}))<\dim _{K} A(a_{i+1})- 4\dim _{K} G_{i+1}-2.$

  We can now apply Lemma \ref{Hilbert} for $m=i+1$ and for sets $T=A(a_{i+1})\cap D_{i}, Q=G_{i+1}$, and obtain set $F$ as in Lemma \ref{Hilbert}. By Lemma \ref{Hilbert} we get $T\subseteq F$ and $G_{i+1}=Q\subseteq F\otimes A(a_{i+1})+A(a_{i+1})\otimes F,$
 so we can set  $U_{i+1}=F.$

 We have constructed set $U_{i+1}$ satisfying the thesis of our theorem.
We can now put $Z_{i+1}=Z'_{i+1}+Z''_{i+1}$ where $Z'_{i+1}, Z''_{i+1}$ are as in Lemma \ref{Z}.
\end{proof}
\begin{corollary}\label{N}  Let $a_{0}, a_{1}, \ldots $ satisfy assumptions of Lemma \ref{GS} and Theorem \ref{important3}. Let $U_{i}, Z_{i}$ be as in Theorem \ref{important3}, and let $N$ be as in Theorem \ref{S}. Then $N\subseteq D\otimes A+A\otimes D$ 
 where $D=\sum_{i=1}^{\infty }D_{i}$ with $D_{i}=\sum_{j=1}^{i}B_{a_{j+1}}( U_{j})+M(Z_{j},a_{j}, a_{j+1}).$
Moreover,  $A(a_{i})\cap D_{i-1}\subseteq U_{i},$ for all $i\geq 1.$

Moreover $S_{i}\subseteq A\otimes D_{i+3}+D_{i+3}\otimes A$ for every $i\geq 1$, where $S_{i}$ is as in Theorem \ref{S}.
\end{corollary}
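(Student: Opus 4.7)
The plan is to combine the structural decomposition of $N$ given by Theorem \ref{S} with the containment properties of the spaces $U_i$ and $Z_i$ provided by Theorem \ref{important3}. The last ``moreover'' claim ($A(a_i)\cap D_{i-1}\subseteq U_i$) is simply a restatement of Theorem \ref{important3}(4), so nothing needs to be done for it. The bulk of the work reduces to proving the single containment
\[
S_i\subseteq D_i\otimes A+A\otimes D_i,
\]
for every $i\ge 1$; once that is established, the chain $D_i\subseteq D_{i+3}\subseteq D$ yields both $S_i\subseteq A\otimes D_{i+3}+D_{i+3}\otimes A$ and, upon summing over $i$ and applying $N\subseteq \sum_{i\ge 1}S_i$ from Theorem \ref{S}, the conclusion $N\subseteq D\otimes A+A\otimes D$.

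To prove $S_i\subseteq D_i\otimes A+A\otimes D_i$, recall that $S_i=M_i+Q_i+Q_{i-1}$ and treat the three summands separately. The two terms $Q_i$ and $Q_{i-1}$ are handled directly by Theorem \ref{important3}(3): each of $Q(F_j,a_j,a_{j+1})$ and $Q'(F'_j,a_j,a_{j+1})$ with $j\in\{i-1,i\}$ already lies in $D_i\otimes A+A\otimes D_i$, since $D_{i-1}\subseteq D_i$. For the term $M_i$, I will use Theorem \ref{important3}(2), which gives $G_i\subseteq U_i\otimes A(a_i)+A(a_i)\otimes U_i$. Writing a generator of $G_i$ as $u\otimes v$ with $u\in U_i$ (the symmetric case is identical), the prefix $A(ka_{i+1})\otimes(A(k'a_{i+1})+A(k'a_{i+1}-a_i))$ in the definition of $M_i$ produces, on the left tensor factor, elements of $A(ka_{i+1})\,u\,A$; by definition $B_{a_{i+1}}(U_i)=\sum_k\bigl(A(ka_{i+1})+A(ka_{i+1}-a_i)\bigr)U_iA$, so this lies in $B_{a_{i+1}}(U_i)\subseteq D_i$. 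The symmetric situation, with $v\in U_i$ on the right factor, lands in the right factor of $B_{a_{i+1}}(U_i)$ for the same reason. Thus $M_i\subseteq B_{a_{i+1}}(U_i)\otimes A+A\otimes B_{a_{i+1}}(U_i)\subseteq D_i\otimes A+A\otimes D_i$.

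Putting the three containments together gives $S_i\subseteq D_i\otimes A+A\otimes D_i$ as required, and weakening $D_i$ to $D_{i+3}$ yields the final clause of the corollary. Summing over $i$ and using $N\subseteq\sum_{i\ge 1}S_i$ (Theorem \ref{S}) together with $D=\sum_{i\ge 1}D_i$ gives $N\subseteq D\otimes A+A\otimes D$. The step that requires the most care is the bookkeeping for $M_i$: one has to notice that the prefactors $A(ka_{i+1})$ and $A(k'a_{i+1})+A(k'a_{i+1}-a_i)$ in the definition of $M_i$ fit precisely the shape used in the definition of $B_{a_{i+1}}(U_i)$, so that $U_i$ on either side of $G_i$ absorbs these prefactors correctly. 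Apart from that matching, the argument is a routine assembly of already-proved statements.
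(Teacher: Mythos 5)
Your proposal is correct and follows essentially the same route as the paper: it uses the decomposition $N\subseteq\sum_n S_n$ with $S_n=M_n+Q_n+Q_{n-1}$ from Theorem \ref{S}, handles $Q_n,Q_{n-1}$ via Theorem \ref{important3}(3) (equivalently Lemma \ref{Z}), handles $M_n$ via Theorem \ref{important3}(2) by matching the prefactors $A(ka_{i+1})$, $A(k'a_{i+1})+A(k'a_{i+1}-a_i)$ with the definition of $B_{a_{i+1}}(U_i)$, and reads off the clause $A(a_i)\cap D_{i-1}\subseteq U_i$ from Theorem \ref{important3}(4). Your per-index containment $S_i\subseteq D_i\otimes A+A\otimes D_i$ is in fact slightly sharper than the stated $D_{i+3}$, which it implies since $D_i\subseteq D_{i+3}$.
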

\begin{proof}  By Theorem  \ref{important3}, applied for sets $F_{i}, F_{i}', G_{i}$  as in Theorem \ref{S}, we have 
$A(a_{i})\cap D_{i-1}\subseteq U_{i},$ for all $i\geq 1.$ By Theorem \ref{S}, $N\subseteq \sum_{n=1}^{\infty }S_{n},$
 where $S_{n}= \sum _{n=1}^{\infty }M_{n}+Q_{n }+Q_{n-1}$ and  where $M_{n}=\sum_{0\leq k,k'}((A(k a_{n+1})+(A(k a_{n+1}-a_{n}))\otimes (A(k' a_{n+1})+A(k' a_{n+1}-a_{n})))G_{n}(A\otimes A),$ 
\[Q_{n}=Q(F_{n}, a_{n}, a_{n+1})+Q(F'_{n}, a_{n}, a_{n+1}).\]
 By Lemma \ref{Z}, $Q_{n}\subseteq D\otimes A+A\otimes D.$
 By Theorem \ref{important3}, $G_{i}\subseteq U_{i}\otimes A+A\otimes U_{i},$ hence $M_{n}\subseteq D\otimes A+A\otimes D.$
 It follows that $N\subseteq A\otimes D+D\otimes A.$  

 Similarly, to show that $S_{i}\subseteq A\otimes D_{i+3}+D_{i+3}\otimes A$ observe that $S_{i}=M_{n}+Q_{n}+Q_{n-1}$.
 By Theorem \ref{important3} (2), and Lemma \ref{Z}, we have  $M_{n}\subseteq D_{i+3}\otimes A+A\otimes D_{i+3}$
 because $\sum_{j=1}^{i}B_{a_{j+1}}(U_{j})\subseteq D_{i}.$
 By Lemma $6.2$ and by Theorem \ref{important3} (3), $Q_{n}, Q_{n-1}\in D_{n+3}\otimes A+A\otimes D_{n+3}.$
\end{proof}
\begin{corollary}\label{n2}
 Let notation be as in Corollary \ref{N}. Then there are homogeneous ideals $I_{1}, I_{2}, \ldots ,$ in $A\otimes A$ and such that $N=I_{1}+I_{2}+\ldots $, and for every $n,$
$I_{n}\subseteq D_{n}\otimes A+A\otimes D_{n}$. Moreover, for every $f\in A\otimes A$ there is $n=n(f)$ such that $f^{n}, \xi (f^{n})\in N'$ where  $N'=\sum_{i=1}^{\infty }( A^{a_{n+2}}\otimes A^{a_{n+2}})I_{n}$.
\end{corollary}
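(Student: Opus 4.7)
The plan unfolds in three steps.

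First, I would read off the $I_{n}$'s from the iterative construction of $N$ that Theorem \ref{S} defers to the appendix. That construction builds $N$ by successively enlarging a chain of two-sided homogeneous ideals, each stage $n$ adjoining the two-sided homogeneous ideal $I_{n}$ of $A\otimes A$ generated by the newly introduced relations indexed by the spaces $G_{n}$, $F_{n}$, $F'_{n}$. Directly from the definition we then have $N = I_{1}+I_{2}+\ldots$, with each $I_{n}$ homogeneous because its generators are homogeneous by the statement of Theorem \ref{S}.

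Second, to prove $I_{n}\subseteq D_{n}\otimes A + A\otimes D_{n}$, I would combine Corollary \ref{N} with a closure argument. The generators of $I_{n}$ lie in (the appropriate component of) $S_{i}$ for some $i\leq n$ after absorbing the constant index shift of $3$ appearing in Corollary \ref{N} into the labelling of the $I_{n}$'s; that corollary then places $S_{i}\subseteq A\otimes D_{i+3} + D_{i+3}\otimes A\subseteq A\otimes D_{n} + D_{n}\otimes A$ by monotonicity of $D_{m}$ in $m$. The remaining task is to verify that this containment is preserved under the two-sided ideal closure. Right-multiplication by $A\otimes A$ is trivial since each summand of $D_{n}$, namely $B_{a_{j+1}}(U_{j})=\sum_{k}(A(ka_{j+1})+A(ka_{j+1}-a_{j}))U_{j}A$ and $M(Z_{j},a_{j},a_{j+1})=\sum_{i}A(i)Z_{j}A$, is manifestly a right ideal of $A$. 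Left-multiplication is absorbed because left-multiplying $A(ka_{j+1})$ by an element of $A(\ell)$ produces elements of $A(ka_{j+1}+\ell)$, which, after splitting $\ell$ into its residue modulo $a_{j+1}$, falls into one of the two prescribed residue classes $A(k'a_{j+1})$, $A(k'a_{j+1}-a_{j})$ that comprise $B_{a_{j+1}}(U_{j})$; similarly for $M(Z_{j},a_{j},a_{j+1})$, left multiplication merely enlarges the outer index set.

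Third, for the final assertion, given $f\in A\otimes A$, apply Theorem \ref{S} to obtain $m_{0}$ with $f^{m_{0}}, \xi(f^{m_{0}})\in N$, and write $f^{m_{0}}=\sum_{j}g_{j}$ as a finite sum with $g_{j}\in I_{j}$. Because $A$ is generated in degree one, every nonzero element of $A\otimes A$ has both tensor factors of positive degree, so $f^{m}$ has bidegree at least $(m,m)$. Choosing $n$ large enough (depending on $f$) that $n\geq a_{n+2}$, and setting $n(f):=n+m_{0}$, the factorisation $f^{n+m_{0}}=f^{n}\cdot f^{m_{0}} = \sum_{j} f^{n}g_{j}$ places $f^{n(f)}$ in $\sum_{j}(A^{a_{n+2}}\otimes A^{a_{n+2}})I_{j}=N'$, since $f^{n}\in A^{a_{n+2}}\otimes A^{a_{n+2}}$. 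The same argument applied to $\xi(f^{n})$, using that $\xi$ is linear and compatible with multiplication after reversing the second factor, covers the second claim.

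The hardest step will be the second: showing $I_{n}\subseteq D_{n}\otimes A + A\otimes D_{n}$ after passing to the two-sided ideal closure. Corollary \ref{N} immediately supplies the inclusion only at the generator level, and absorbing left multiplication by $A\otimes A$ is not automatic since $D_{n}$ is only a sum of right ideals of $A$. Overcoming this requires exploiting the very specific structure of $B_{a_{j+1}}(U_{j})$, in particular that left-multiplication by $A$ shifts the counting index $k$ within the ambient union of two residue classes modulo $a_{j+1}$, so that the right-hand side is left-stable even though the individual summands of $D_{n}$ are not.
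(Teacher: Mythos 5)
Your step 2 rests on a false claim, and this is precisely the point where the real work lies. The space $D_{n}\otimes A+A\otimes D_{n}$ is \emph{not} stable under left multiplication by $A\otimes A$: the summand $B_{a_{j+1}}(U_{j})=\sum_{k}\bigl(A(ka_{j+1})+A(ka_{j+1}-a_{j})\bigr)U_{j}A$ only admits left factors whose degrees lie in the two residue classes $0$ and $-a_{j}$ modulo $a_{j+1}$, so multiplying on the left by $A(\ell)$ with $\ell$ in any other residue class yields $A(ka_{j+1}+\ell)U_{j}A$, which does not lie in $B_{a_{j+1}}(U_{j})$; your assertion that $ka_{j+1}+\ell$ ``falls into one of the two prescribed residue classes'' is simply not true for general $\ell$. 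Likewise, left multiplication can push the outer index of $M(Z_{j},a_{j},a_{j+1})$ into the excluded intervals $[ka_{j+1}-a_{j},ka_{j+1}+a_{j}]$. Indeed, if $D$ were left-stable the paper would have no need for $I[D]=\{r\in D: Ar\subseteq D\}$ nor for Lemmas \ref{d} and \ref{4}. The paper's proof of the corollary therefore uses no closure argument at this stage: it sets $I_{1}=\cdots=I_{9}=0$ and defines $I_{n+9}$ to be the two-sided ideal generated by the $n$-th batch of powers ($f_{n}^{20na_{n+1}}$ and $\xi(f_{n}^{20na_{n+1}})$, or the corresponding sets built from ${\bar H}_{n}$ when $K$ is uncountable) --- note these generators are powers of the enumerated elements, not relations ``indexed by $G_{n},F_{n},F'_{n}$'' --- and then quotes the proof of Theorem \ref{S} (ultimately Theorem \ref{important2}), which bounds the \emph{entire} two-sided ideal generated by those powers by $M_{n+1}+Q_{n}+Q_{n-1}$, i.e.\ by $\sum_{i\leq n+5}S_{i}$; Corollary \ref{N} and the monotonicity $D_{i}\subseteq D_{i+1}$ then give $I_{n}\subseteq D_{n}\otimes A+A\otimes D_{n}$. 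In other words, the ideal-closure difficulty you correctly identify as the hardest step is exactly the content of the appendix and cannot be recovered by the residue-class absorption you sketch.

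Your step 3 also has a gap in the $\xi$ half (plus a slip: the condition ``$n\geq a_{n+2}$'' is unsatisfiable since $a_{n+2}$ grows far faster than $n$; what you need is $n\geq a_{j+2}$ for the finitely many indices $j$ occurring, or, as in the paper, that the relevant power lies in a single $I_{m}$ so that $n\geq a_{m+2}$ suffices). The map $\xi$ is not multiplicative: $\xi(uv)=\sum (u_{1}\otimes 1)\,\xi(v)\,(1\otimes u_{2}^{op})$ for $u=\sum u_{1}\otimes u_{2}$, so applying $\xi$ to the one-sided factorisation $f^{n}\cdot f^{m_{0}}$ puts the high-degree factor on the \emph{right} of the second tensor slot, giving an element of $(A^{n}\otimes K)\,\xi(f^{m_{0}})\,(K\otimes A^{n})$, which is not visibly inside $(A^{a_{j+2}}\otimes A^{a_{j+2}})I_{j}$; membership in $N'$ needs high-degree multipliers on the left of \emph{both} slots. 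The paper resolves this by sandwiching, $\xi(f^{a_{m+2}}f^{n}f^{a_{m+2}})\subseteq (A^{a_{m+2}}\otimes A^{a_{m+2}})\,\xi(f^{n})\,(A\otimes A)\subseteq (A^{a_{m+2}}\otimes A^{a_{m+2}})I_{m}$, using that $f^{n}$ and $\xi(f^{n})$ lie in one and the same two-sided ideal $I_{m}$ by construction. Your phrase about $\xi$ being ``compatible with multiplication after reversing the second factor'' gestures at the right phenomenon, but the argument as written does not deliver the containment.
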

\begin{proof}
 Recall that $N$ is constructed as in Theorem \ref{S}, hence for every $f\in A\otimes A$ there is $n=n(f)$ such that $f^{n}, \xi (f^{n})\in N$. We put $I_{1}=I_{2}=\ldots =I_{9}=0$. If $K$ is a countable field then for $n\geq 1$, we define   
  $I_{n+9}$ to be the ideal of $A\otimes A$ 
 generated by elements $f_{n}^{20na_{n+1} }$ and $\xi(f_{n}^{20na_{n+1}})$ where $f_{n}$ is as in the beginning of the proof of Theorem \ref{S}. If $K$ is an uncountable field then for $n\geq 1$, we define   for every $n\geq 1,$ 
  $I_{n+9}$ be the ideal of $A\otimes A$ generated by elements from the set $\{f^{100a_{n+2}}:f\in {\bar H}_{n}\}$ and 
  from the set $\{\xi(f)^{100a_{n+2}}:f\in {\bar H}_{n}\}$. From the proof of Theorem \ref{S} it follows that $I_{n+9}\subseteq \sum_{i=1}^{n+5}S_{i}$, where $S_{i}$ are as in Theorem \ref{S}.   From the last part of Corollary \ref{N}, $S_{n}\subseteq D_{n+3}\otimes A+A\otimes D_{n+3}$ for every $n\geq 1$. Since $D_{i}\subseteq D_{i+1}$ for every $n$, it follows that  
 $I_{n}\subseteq D_{n}\otimes A+A\otimes D_{n}$.

We will now show that for every $f\in A\otimes A$ there is $n=n(f)$ such that $f^{n}, \xi (f^{n})\in N'$. Let $f\in N$, then by construction of ideals $I_{1}, I_{2}, \ldots $ there is $m$ such that $f^{n}, \xi (f^{n})\in I_{m}$ for some $n$. Then $f^{n+a_{m+2}}=f^{a_{m+2}}\cdot f ^{n}\subseteq 
 ( A^{a_{m+2}}\otimes A^{a_{m+2}})I_{m}\subseteq N'$.

Observe also that $\xi(f^{n+2a_{m+2}})=\xi(f^{a_{m+2}}\cdot f^{n}\cdot f^{a_{m+2}})\subseteq (A^{a_{m+2}}\otimes A^{a_{m+2}})I_{m}\subseteq N'.$ 
\end{proof}

\section{Definition of the ideal $I$} \label{ideal}
 Let  $C$ be a right ideal of $A$, so $IA\subseteq A$, which is homogeneous.
Define subspace $I[C]$ of $A$ in the following way.
\[\{I[C]=\{r\in C: Ar\subseteq C\}.\]  
 Clearly $I(C)$ is an ideal in $A$.

Let $D=\bigcup_{i=1}^{\infty }D_{i},$ with $D_{i}=\sum_{j=1}^{i}B_{a_{j+1}}( U_{j})+M(Z_{j},a_{j}, a_{j+1}),$ be as in Corollary \ref{N}. We can define $I[D]=\{a\in D:Aa\subseteq  D\}.$
 As $D$ is a right ideal, so $DA\subseteq D$, so it is clear that $I[D]$ is an ideal in $D$.

\begin{lemma}\label{d} Let notations be as in Corollary \ref{N} and Theorem \ref{important3}. 
 If $a\in \bigcap _{i=0}^{a_{n+1}}A(i)D_{n}$ then $a\in I[D]$.
\end{lemma}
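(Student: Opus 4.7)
The plan is to reduce the lemma to verifying that $D_n$ is stable under left multiplication by elements of $A$ whose degree is a multiple of $a_{n+1}$. Taking $i=0$ in the hypothesis already gives $a\in A(0)D_{n}=D_{n}\subseteq D$, so half of the definition of $I[D]$ is immediate. The substance of the proof is showing $Aa\subseteq D$.

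By linearity it suffices to treat $ba$ for $b$ homogeneous of degree $k$. The key idea is to exploit divisibility: since $a_j\mid a_{j+1}$ for every $j$, one has $a_{j+1}\mid a_{n+1}$ for all $j\leq n$. Given $k$, I choose $i\in\{0,1,\dots,a_{n+1}\}$ with $k+i\equiv 0\pmod{a_{n+1}}$ (namely $i=(-k)\bmod a_{n+1}$). The hypothesis then supplies a decomposition $a=\sum_\nu u_\nu v_\nu$ with $u_\nu\in A(i)$ and $v_\nu\in D_n$, whence $ba=\sum_\nu(bu_\nu)v_\nu$ with each $bu_\nu\in A(m)$ for $m:=k+i$ a multiple of $a_{n+1}$.

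The remaining work is to check that $A(m)D_n\subseteq D_n$ whenever $m$ is a multiple of $a_{n+1}$. Writing $D_n=\sum_{j=1}^{n}\bigl(B_{a_{j+1}}(U_j)+M(Z_j,a_j,a_{j+1})\bigr)$, it suffices to verify invariance of each summand under left multiplication by $A(\ell a_{j+1})$ (for any $\ell\geq 0$). For $B_{a_{j+1}}(U_j)=\sum_{k\geq 0}\bigl(A(ka_{j+1})+A(ka_{j+1}-a_j)\bigr)U_j A$ this is immediate: left multiplication by $A(\ell a_{j+1})$ simply re-indexes $k\mapsto k+\ell$ and lands again in the same sum.

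The only delicate step — which I expect to be the main obstacle — is the $M$-case. Here I must check that the window condition ``$[i',i'+\alpha]$ disjoint from $[ka_{j+1}-a_j,\,ka_{j+1}+a_j]$ for every $k\geq 0$'' persists after replacing $i'$ by $i'+\ell a_{j+1}$. Translating the variable $k\mapsto k-\ell$, the shifted constraint would in principle demand disjointness also for some negative indices $k'=k-\ell<0$; but for such $k'$ the interval $[k'a_{j+1}-a_j,k'a_{j+1}+a_j]$ lies strictly below $0$, while $i'\geq 0$, so disjointness is automatic. Combining the two cases yields $ba\in D_n\subseteq D$ for every homogeneous $b$, hence $Aa\subseteq D$ and $a\in I[D]$, completing the proof.
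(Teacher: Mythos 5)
Your proof is correct and follows essentially the same route as the paper: use the hypothesis to complete the degree of a homogeneous multiplier to a multiple of $a_{n+1}$, then invoke stability of $D_{n}$ under left multiplication by homogeneous elements of degree divisible by $a_{n+1}$. The only difference is cosmetic: the paper asserts $A(a_{n+1})D_{n}\subseteq D_{n}$ as immediate ``by the definition of $D_{n}$'' and factors a monomial $v=v_{1}v_{2}$ instead of choosing $i=(-k)\bmod a_{n+1}$, whereas you explicitly verify the invariance for both the $B_{a_{j+1}}(U_{j})$ summands and the window condition in $M(Z_{j},a_{j},a_{j+1})$, which is a welcome extra detail.
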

\begin{proof} By the definition of $D_{n}$, $A(a_{n+1})D_{n}\subseteq D_{n}$. Let $v\in M$, we will show that $va\in D_{n},$ and hence $va\in D$.  
 Let  $v=v_{1}v_{2}$ where the degree of $v_{1}$  divisible by $a_{n+1},$ $v_{2}\in M(s)$ with $s<a_{n+1}.$
  Notice that by assumption $a\in A(a_{n+1}-s)D_{n}$ hence $v_{2}a\in v_{2}A(a_{n+1}-s)D_{n}\subseteq A(n+1)D_{n}\subseteq D_{n}$. Therefore $v_{1}v_{2}a\in v_{1}D_{n}\subseteq D_{n}\subseteq D.$
 
So for every $v\in M$, $va\in D$, hence $Aa\subseteq D$, so $a\in I[D]$. 
 \end{proof}
\begin{lemma}
 Let notations be as in Corollary \ref{N} and Theorem \ref{important3}. 
  Then $A/I[D]$ is an  infinitely dimensional space.
\end{lemma}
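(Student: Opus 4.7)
The plan is to produce infinitely many $K$-linearly independent classes in $A/I[D]$ by checking that each monomial $m_i$ from Theorem~\ref{important3} has nonzero image. Since $I[D]\subseteq D$ by definition, it suffices to establish the purely combinatorial inclusion
\[D\cap A(a_i)\subseteq U_i\qquad\text{for every } i\geq 1,\]
because $U_i\oplus Km_i=A(a_i)$ will then force $m_i\notin D$, hence $m_i\notin I[D]$. Both $D$ and $I[D]$ are homogeneous subspaces of $A$ (for $I[D]$, one compares graded components of the inclusion $Ar\subseteq D$), so the nonzero classes $m_i+I[D]$ sit in pairwise distinct degrees $a_i$ and are automatically linearly independent, giving $\dim_K A/I[D]=\infty$.

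The first step is a reduction: $D\cap A(a_i)=D_i\cap A(a_i)$. For any $j>i$, the summands appearing when passing from $D_{j-1}$ to $D_j$ live in degrees strictly exceeding $a_i$. Indeed, every element of $B_{a_{j+1}}(U_j)$ has degree at least $a_j>a_i$, and every element of $M(Z_j,a_j,a_{j+1})$ has degree strictly above $a_j$ — the disjointness requirement $[l,l+\alpha]\cap[-a_j,a_j]=\emptyset$ (the $k=0$ case) forces the left $A$-factor $A(l)$ to satisfy $l>a_j$, and $Z_j$ itself contributes positive degree.

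It remains to show that each of the three pieces of $D_i=D_{i-1}+B_{a_{i+1}}(U_i)+M(Z_i,a_i,a_{i+1})$ intersects $A(a_i)$ inside $U_i$. The inclusion $D_{i-1}\cap A(a_i)\subseteq U_i$ is precisely item~(4) of Theorem~\ref{important3}. For $B_{a_{i+1}}(U_i)\cap A(a_i)$, each summand $A(ka_{i+1})U_iA$ or $A(ka_{i+1}-a_i)U_iA$ has overall degree at least $a_i$, with equality forcing $k=0$ and the trailing $A$-factor to contribute degree $0$, so we recover at most $U_i$ itself. Finally $M(Z_i,a_i,a_{i+1})\cap A(a_i)=0$, again by the degree estimate above applied with $j=i$. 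Combining the three gives the desired $D_i\cap A(a_i)\subseteq U_i$, and the theorem follows. The only point demanding care is the $k=0$ boundary case of the disjointness condition defining $M$, which is precisely what guarantees that the left $A$-factor pushes the total degree strictly above $a_i$; once this is noted, the heavy lifting is already done by Theorem~\ref{important3}.
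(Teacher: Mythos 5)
Your proposal is correct, and it takes a genuinely different route from the paper. The paper reduces to showing $A/D$ is infinite dimensional and argues by contradiction: if $A/D$ were finite dimensional then $A(j)\subseteq D_{j}$ for some $j$, and since each $D_{j}$ is a right ideal this inclusion propagates to all higher degrees, in particular forcing $A(a_{j+2})\subseteq D_{j}\subseteq D_{j+1}$, which contradicts the codimension-one statement $A(a_{j+2})\cap D_{j+1}\subseteq U_{j+2}$ of Corollary \ref{N}. You instead argue directly, proving the graded inclusion $D\cap A(a_{i})\subseteq U_{i}$ for every $i$, so that each monomial $m_{i}$ survives in $A/I[D]$; this rests on the same key ingredient (item (4) of Theorem \ref{important3}, equivalently the ``moreover'' part of Corollary \ref{N}), but where the paper avoids analysing the level-$i$ and higher summands in degree $a_{i}$ by shifting the contradiction to degree $a_{j+2}$ relative to $D_{j+1}$, you handle them by explicit degree bookkeeping: the $k=0$ disjointness condition in the definition of $M(Z_{j},a_{j},a_{j+1})$ forces the left factor to have degree exceeding $a_{j}$, and $B_{a_{j+1}}(U_{j})$ contributes in degree $a_{i}$ only the space $U_{i}$ itself (when $j=i$, $k=0$) or nothing (when $j>i$). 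Your checks here are sound, as is the observation that $I[D]$ is homogeneous (comparing graded components of $Ar\subseteq D$), which gives the linear independence of the classes $m_{i}+I[D]$; in fact homogeneity of $D$ alone would already suffice, since $I[D]\subseteq D$ gives $\dim_{K}A/I[D]\geq\dim_{K}A/D$. The net effect is that your argument is a little longer on combinatorial verification but yields slightly more explicit information (a nonzero homogeneous class in each degree $a_{i}$), while the paper's contradiction argument is shorter because the right-ideal property does the degree propagation for it.
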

\begin{proof} As $I(D)\subseteq D$, it suffices to show that $A/D$ is an infinitely dimensional linear space. 
 By the definition $D=\sum_{j=1}^{\infty }C_{j}$, where for every $j$, $C_{j}=B_{a_{j+1}}( U_{j})+M(Z_{j},a_{j}, a_{j+1})$  is a homogeneous linear space and all elements from $C_{j}$ have degrees 
  ${a_{j}\over 40j}$ or larger (elements with the smallest possible degrees comming from sets $Z_{j}$ or $U_{j}.$)
 Suppose on the contrary that $A/D$ is finitely dimensional, then it is nilpotent of degree $j$, for some $j$. Then, $A(j)\subseteq A(j)\cap \sum_{i=1}^{j}C_{i}$, so $A(j)\subseteq A\cap D_{j}.$
 As $D_{j}$ is a right ideal it follows that  $A(j+k)\subseteq D_{j}A(k)\subseteq D_{j}$ for all $k\geq 0$.
 Therefore,  $A(a_{j+k})\cap D_{j}=A(j+k)$ for all  $k\geq 0$. Observe that for $k=2$,  $D_{j}\subseteq D_{j+1}$ and by Corollary \ref{N} we get  
 $A(a_{j+2})\cap D_{j}\subseteq A(a_{j+2})\cap D_{j+1}\subseteq U_{j+2}\neq A(a_{j+2}),$ a contradiction.
\end{proof}

We will now prove two supporting Lemmas.

\begin{lemma}\label{q}
 Let $Q_{1}, Q_{2}, B\subseteq A$, are linear spaces. If $a\in A\otimes Q_{1}+B\otimes A$ and $a\in A\otimes Q_{2}+B\otimes A$ then $a\in A\otimes (Q_{1}\cap Q_{2})+B\otimes A.$
 Similarly, if $a'\in  Q_{1}\otimes A+ A\otimes B$ and $a'\in Q_{2}\otimes A+ A\otimes B$, then $a'\in (Q_{1}\cap Q_{2})\otimes A+A\otimes B.$
\end{lemma}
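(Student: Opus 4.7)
The statement is a standard linear-algebra fact about tensor products of vector spaces: intersection in one tensor factor commutes with the ``modulo $B\otimes A$'' operation. The plan is to prove it by choosing a vector-space complement to $B$ and projecting.

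First, pick a linear complement $B'$ of $B$ in $A$, so that $A=B\oplus B'$ as $K$-vector spaces. Tensoring with $A$ over the field $K$ preserves direct sums, hence
\[
A\otimes A \;=\; (B\otimes A)\;\oplus\;(B'\otimes A),
\]
and every $a\in A\otimes A$ has a unique decomposition $a=a_B+a_{B'}$ with $a_B\in B\otimes A$ and $a_{B'}\in B'\otimes A$. Denote by $\pi:A\otimes A\to B'\otimes A$ the projection along $B\otimes A$; its kernel is exactly $B\otimes A$.

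Next, use the two hypotheses. Write $a=x_1+y_1$ with $x_1\in A\otimes Q_1$ and $y_1\in B\otimes A$. Applying $\pi$ and using that $A\otimes Q_1=(B\otimes Q_1)\oplus (B'\otimes Q_1)$, we see $\pi(x_1)\in B'\otimes Q_1$, and $\pi(y_1)=0$, so $\pi(a)\in B'\otimes Q_1$. The second hypothesis gives, in the same way, $\pi(a)\in B'\otimes Q_2$. Now the key point: since $B'$ has a basis $\{e_i\}_{i\in I}$, every element of $B'\otimes A$ has a \emph{unique} expression $\sum_i e_i\otimes c_i$ with $c_i\in A$, and such an element lies in $B'\otimes Q_j$ iff each $c_i\in Q_j$. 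Therefore
\[
(B'\otimes Q_1)\cap(B'\otimes Q_2)\;=\;B'\otimes(Q_1\cap Q_2),
\]
and we conclude $\pi(a)\in B'\otimes(Q_1\cap Q_2)\subseteq A\otimes(Q_1\cap Q_2)$. Since $a-\pi(a)\in B\otimes A$, we obtain $a\in A\otimes(Q_1\cap Q_2)+B\otimes A$, as required.

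The second assertion is entirely symmetric: choose a complement $B'$ of $B$ in $A$ on the right, decompose $A\otimes A=(A\otimes B)\oplus(A\otimes B')$, and run the identical argument with the roles of the two tensor factors swapped. No step here is genuinely hard; the only thing that needs a moment's care is the identity $(B'\otimes Q_1)\cap(B'\otimes Q_2)=B'\otimes(Q_1\cap Q_2)$, which I would justify by the basis argument above (or equivalently by flatness of $K$-vector spaces).
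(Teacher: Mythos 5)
Your proof is correct and takes essentially the same route as the paper: the paper passes to $B'=A/B$ and uses the identity $B'\otimes Q_{1}\cap B'\otimes Q_{2}=B'\otimes(Q_{1}\cap Q_{2})$ before lifting back, which is exactly your reduction modulo $B\otimes A$, realized there by a quotient rather than by your choice of a complement and projection. The basis argument you supply for $(B'\otimes Q_{1})\cap(B'\otimes Q_{2})=B'\otimes(Q_{1}\cap Q_{2})$ is precisely the point the paper leaves implicit, so nothing is missing.
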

\begin{proof} If we take $B'=A/B$ then $a+B\in B'\otimes Q_{1}, a+B\in B'\otimes Q_{2},$ hence $a+B\in B'\otimes (Q_{1}\cap Q_{2}),$ hence  $a\in A\otimes (Q_{1}\cap Q_{2})+B\otimes A.$
 The proof of the second statement is similar.
\end{proof}

\begin{lemma}\label{2} Let $n$ be a natural number, and 
 let $Q_{1}, \ldots , Q_{n}$ be linear subspaces of $A$, and let $a\in A\otimes A$ be such that 
$a\in Q_{i}\otimes A+A\otimes Q_{j}$ for all $i,j\leq n$. Then $a\in (\bigcap _{i=1}^{n}Q_{i})\otimes A+A\otimes  (\bigcap _{i=1}^{n}Q_{i})$.
\end{lemma}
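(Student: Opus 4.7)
The plan is to reduce Lemma \ref{2} to two successive applications of Lemma \ref{q}, each by induction on the number of subspaces being intersected. The hypothesis gives a mixed decomposition of $a$ indexed by pairs $(i,j)$; I would first collapse the $j$-index while keeping the $i$-index frozen (using the first half of Lemma \ref{q}), and then collapse the $i$-index (using the second half).

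More precisely, fix an index $i\in\{1,\dots,n\}$. For every $j\le n$ we have $a\in Q_i\otimes A+A\otimes Q_j$. Setting $B=Q_i$ in the first part of Lemma \ref{q} and iterating on $j$, a straightforward induction on $n$ yields
\[a\in Q_i\otimes A+A\otimes\Bigl(\bigcap_{j=1}^{n}Q_j\Bigr).\]
Denote $Q=\bigcap_{j=1}^{n}Q_j$. Since $i$ was arbitrary, we now know $a\in Q_i\otimes A+A\otimes Q$ for every $i\le n$. Setting $B=Q$ in the second part of Lemma \ref{q} and iterating on $i$, the same kind of induction gives
\[a\in\Bigl(\bigcap_{i=1}^{n}Q_i\Bigr)\otimes A+A\otimes Q=Q\otimes A+A\otimes Q,\]
which is the desired conclusion.

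There is essentially no obstacle here beyond bookkeeping: the only subtlety is making sure that during the first pass the left-hand summand $Q_i\otimes A$ is treated as the fixed ``$B\otimes A$'' piece of Lemma \ref{q}, and symmetrically during the second pass $A\otimes Q$ plays the role of ``$A\otimes B$''. Both halves of Lemma \ref{q} are available, so the two passes are legitimate, and each inductive step only requires combining two decompositions sharing a common fixed summand. The base case $n=1$ is trivial, and the inductive step is an immediate quotation of Lemma \ref{q}.
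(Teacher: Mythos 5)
Your proof is correct and follows essentially the same route as the paper: first use the first part of Lemma \ref{q} (with the left summand $Q_i\otimes A$ held fixed) to intersect all the right-hand spaces, then use the second part (with $A\otimes\bigcap_j Q_j$ held fixed) to intersect the left-hand spaces. The only difference from the paper's proof is that you spell out the iteration as an induction, which the paper compresses into "applied several times".
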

\begin{proof} Observe that by the first part of Lemma \ref{q} applied several times (for sets $Q_{1}, \ldots Q_{n}$) we get, that for every $j\leq n$, $a\in A\otimes \bigcap   _{i=1}^{n}Q_{i}+Q_{j}\otimes A$. 
  By the second part of Lemma \ref{q} applied to $a'=a$ we get $a\in  (\bigcap _{i=1}^{n}Q_{i})\otimes A+A\otimes  (\bigcap _{i=1}^{n}Q_{i})$.
\end{proof}

\begin{lemma}\label{4}  Let notation be as in Corollary \ref{N} and Theorem \ref{important3}.  Let $I_{n}$ be a homogeneous ideal of $A\otimes A$ such 
that $I_{n}\subseteq  A\otimes D_{n}+D_{n}\otimes A$ for some $n$. Then $(A^{a_{n+2}}\otimes A^{a_{n+2}}) I_{n}\subseteq A\otimes I[D]+I[D]\otimes A.$
\end{lemma}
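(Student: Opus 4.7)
\noindent\textbf{Proof plan for Lemma \ref{4}.}

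The plan is to reduce the inclusion to a single generator $c=(u\otimes v)b$ with $u,v\in A$ monomials of degree $\geq a_{n+2}$ and $b$ a homogeneous element of $I_n$, and then to exploit the freedom to factor the monomials $u$ and $v$ in many ways. For each pair $(i,j)\in[0,a_{n+1}]^2$ I will exhibit $c$ as an element of $A(i)D_n\otimes A+A\otimes A(j)D_n$; Lemma \ref{2} then lets me intersect these representations over all $i,j$, and Lemma \ref{d} identifies the common intersection $\bigcap_{i=0}^{a_{n+1}}A(i)D_n$ as a subspace of $I[D]$.

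Concretely, fix a generator $c=(u\otimes v)b$ and any $i,j\in[0,a_{n+1}]$. Since $\deg u,\deg v\geq a_{n+2}>a_{n+1}\geq \max(i,j)$, the monomials factor as $u=u_1u_2$ and $v=v_1v_2$ with $u_1\in A(i)$ and $v_1\in A(j)$. Because $I_n$ is an ideal, $\tilde b:=(u_2\otimes v_2)b$ again lies in $I_n\subseteq A\otimes D_n+D_n\otimes A$, so we may write $\tilde b=\sum_k a_k\otimes d_k+\sum_k d'_k\otimes a'_k$ with $d_k,d'_k\in D_n$. Then
\[c=(u_1\otimes v_1)\tilde b=\sum_k(u_1a_k)\otimes(v_1d_k)+\sum_k(u_1d'_k)\otimes(v_1a'_k).\]
The first sum lies in $A\otimes A(j)D_n$ (using $v_1\in A(j)$, $d_k\in D_n$), and the second lies in $A(i)D_n\otimes A$ (using $u_1\in A(i)$, $d'_k\in D_n$). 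Hence $c\in A(i)D_n\otimes A+A\otimes A(j)D_n$ for every such pair $(i,j)$.

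Applying Lemma \ref{2} to the family $\{A(i)D_n:0\leq i\leq a_{n+1}\}$ yields
\[c\in\Bigl(\bigcap_{i=0}^{a_{n+1}}A(i)D_n\Bigr)\otimes A+A\otimes\Bigl(\bigcap_{j=0}^{a_{n+1}}A(j)D_n\Bigr),\]
and Lemma \ref{d} gives $\bigcap_{i=0}^{a_{n+1}}A(i)D_n\subseteq I[D]$. Consequently $c\in I[D]\otimes A+A\otimes I[D]$, which proves the lemma after summing and using linearity to reduce arbitrary elements of $(A^{a_{n+2}}\otimes A^{a_{n+2}})I_n$ to a sum of such generators.

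The only delicate point is the directionality of the splittings: since $D_n$ is only a right ideal, only products of the form $A(i)\cdot D_n$ (and not $D_n\cdot A(i)$) are directly under control, which is why the $A(i)$-prefix of $u$ must be placed adjacent to $D_n$-contributions appearing on the left side of $\tilde b$'s decomposition, and symmetrically on the right. The assumption $\deg u,\deg v\geq a_{n+2}$ is precisely what allows the split to be made for every $i,j$ in the full range $[0,a_{n+1}]$ needed by Lemma \ref{d}.
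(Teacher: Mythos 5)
Your proposal is correct and follows essentially the same route as the paper: factor off prefixes $u_1\in A(i)$, $v_1\in A(j)$ for every $0\leq i,j\leq a_{n+1}$ (absorbing the remaining factors into $I_n$ and using $I_n\subseteq D_n\otimes A+A\otimes D_n$), then apply Lemma \ref{2} with $Q_i=A(i)D_n$ and Lemma \ref{d} to land in $I[D]\otimes A+A\otimes I[D]$. Your remark on the directionality forced by $D_n$ being only a right ideal is a correct reading of why the argument is arranged this way.
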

\begin{proof} We will first show that $(A^{a_{n+2}}\otimes A^{a_{n+2}})I_{n}$ is contained in  $A(k)D_{n}\otimes A+A\otimes A(j)D_{n}$ for each $0\leq i,j\leq a_{n+1}$.
 Note that $(A^{a_{n+2}}\otimes A^{a_{n+2}})I_{n}=(A(k)\otimes A(j))(A^{a_{n+2}-k}\otimes A^{a_{n+2}-j}) I_{n}\subseteq (A(k)\otimes A(j))I_{n}\subseteq (A(k)\otimes A(j))(D_{n}\otimes A+A\otimes D_{n})\subseteq 
A(k)D_{n}\otimes A+A\otimes A(j)D_{n}.$

Let $a\in A^{a_{n+2}}I_{n}$. Then $a$ satisfies assumptions of Lemma \ref{2} for $Q_{i}=A(i)D_{n}$. By Lemma \ref{2}, $a\in A\otimes P=P\otimes A$ where $P=\bigcap _{i=0}^{a_{n+1}} A(i)D_{n}.$
 By Lemma \ref{d}, $P\subseteq I[D]$. Hence $a\in D_{n}\otimes A+A\otimes A(j)D_{n}.$
\end{proof}


\section{Proof of Theorem \ref{1}}\label{final}

We are now ready to prove Theorem \ref{1}.
 
 {\bf Proof of Theorem \ref{1}} 
 Let notation be as in Corollary \ref{N},  Corollary \ref{n2} and Theorem \ref{S}. Let $D=\sum_{i=1}^{\infty} D_{i}$ as in Corollary \ref{N} and let $I[D]=\{r\in D: Ar\subseteq D\}$.  Let $R=A/I[D]$, we claim that $R\otimes R$ and $R\otimes R^{op}$ are nil algebras.

By Corollary \ref{n2}  there are homogeneous ideals $I_{1}, I_{2}, \ldots ,$ in $A\otimes A$ and such that $N=I_{1}+I_{2}+\ldots $, and for every $n,$
$I_{n}\subseteq D_{n}\otimes A+A\otimes D_{n}$. 
  Moreover, for every $f\in A\otimes A$ there is $n=n(f)$ such that $f^{n}, \xi (f^{n})\in N'$, where  $N'=\sum_{i=1}^{\infty }( A^{a_{n+2}}\otimes A^{a_{n+2}})I_{n}$.
 Observe that by Lemma \ref{4},  $(A^{a_{n+2}}\otimes A^{a_{n+2}}) I_{n}\subseteq A\otimes I[D]+I[D]\otimes A.$ 
 Therefore $N'\subseteq A\otimes I[D]+I[D]\otimes A.$
 By Assumption for every $f\in A\otimes A$ there is number $n=n(f)$ such that $f^{n}\in N' \subseteq A\otimes I[D]+I[D]\otimes A.$
 It follows that $(A/I)\otimes _{K} (A/I)=R\otimes _{K} R$ is a nil algebra.

Let $g\in A\otimes A$, and let $f\in A\otimes A$ be such that $\xi(f)=g$ in the linear space $A\otimes A$. By assumption there is number $n=n(f)$ such that $\xi(f^{n})\in N' \subseteq A\otimes I[D]+I[D]\otimes A.$  

Let $*$ be the opposite multiplication. Let $g'=g*g*\ldots *g$ -the opposite multiplication of $m$ copies of $g$. Then in the linear space $A\otimes A$ we have $g'=\xi(f^{m})$. By Assumption $\xi(f^{n})\in N'$ hence $g'\subseteq J$ where $J= A\otimes I[D]+I[D]\otimes A.$
 $J$ is a linear subspace in $A\otimes A$, so $g'+J=0$ in $A/I[D]\otimes A/I[D].$
It follows that $(g+J)*(g+J)* \ldots *(g+J)\subseteq g'+J$, so it follows that $g+J$ is a nilpotent element in $A/I[D]\otimes (A/I[D])^{op}$ for all $g\in A$.
 Therefore, $R\otimes R^{op}$ is a nil algebra.

\section{Appendix: Proof of Theorem \ref{S}.\\
 Nil ideals}\label{appendix1}
 In this section we will recall some ideas from \cite{s}. Results from \cite{s} were simplified in \cite{n}, so we will use  \cite{n}.
 We first introduce the following notation: 
 let  $r\in ( A(1)+\ldots +A(t))\otimes (A(1))+\ldots +A(q))$, where $t, q$ are the minimal possible. Then we say that the left degree of $a$, denoted $l(a)$  is $t$, and the right degree of $a$ denoted $r(a)$ is $q$. 

We begin with three lemmas which correspond to Lemma 1 from \cite{s} and Lemma 7 from \cite{n}.
 
\begin{lemma}\label{left1}
  Let $ f \in \sum_{1\leq i,j\leq t}A(i)\otimes A(j)$ for some $t \geq 1$. For each $n \geq  t$ there exists a linear space  
 $F(f,n)\subseteq A(n)\otimes A$   with
 $$ \dim _{K} F(f,n) < 2^{2t+1}tn$$ such that, for every integer $ m \geq n$,  $f^{m}$ belongs to the right ideal of $A\otimes A$ generated by polynomials of the form $hgf^{m-n}$, where $h\in F(f,n)$ and $g\in (A(0)+\ldots +A(t))\otimes K$.
\end{lemma}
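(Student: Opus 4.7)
The plan is a monomial-level analysis of $f^m$.  Write $f = \sum_k a_k\otimes b_k$ with monomials $a_k\in A(i_k)$, $b_k\in A(j_k)$, $1\leq i_k,j_k\leq t$.  Then $f^m = \sum_{\vec{k}\in K^m} a_{\vec{k}}\otimes b_{\vec{k}}$, where $a_{\vec{k}} = a_{k_1}\cdots a_{k_m}$ and $b_{\vec{k}} = b_{k_1}\cdots b_{k_m}$.  For each sequence $\vec{k}$, let $s(\vec{k})$ be the smallest index with $i_{k_1}+\cdots+i_{k_s}\geq n$; minimality forces $s\leq n$ and $i_{k_1}+\cdots+i_{k_s} = n+d$ for some $d\in[0,t-1]$.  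Partitioning by the valid prefix $\vec{k}' = (k_1,\ldots,k_{s(\vec{k})})$ yields the decomposition
\[
  f^m \;=\; \sum_{\vec{k}'} H_{\vec{k}'}\, f^{\,m-s(\vec{k}')},\qquad H_{\vec{k}'}\;:=\;a_{\vec{k}'}\otimes b_{\vec{k}'}.
\]

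The crucial factorisation is that, since $a_{\vec{k}'}$ is a word of length $n+d$, splitting it at position $n$ gives $a_{\vec{k}'} = \alpha_{\vec{k}'}\delta_{\vec{k}'}$ with $\alpha_{\vec{k}'}\in A(n)$ and $\delta_{\vec{k}'}\in A(d)$ a suffix of the terminating monomial $a_{k_s}$.  Hence $H_{\vec{k}'} = h_{\vec{k}'}\cdot g_{\vec{k}'}$, where $h_{\vec{k}'} := \alpha_{\vec{k}'}\otimes b_{\vec{k}'} \in A(n)\otimes A$ and $g_{\vec{k}'} := \delta_{\vec{k}'}\otimes 1 \in (A(0)+\cdots+A(t-1))\otimes K$.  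When $s(\vec{k}') = n$ (equivalently, every $a_{k_j}$ in the prefix has degree $1$), the factor $f^{m-s(\vec{k}')} = f^{m-n}$ is exactly what the lemma demands and the term $h_{\vec{k}'}g_{\vec{k}'}f^{m-n}$ already lives in the prescribed right ideal with $r=1$.  For prefixes of length $s<n$ I write $f^{m-s} = f^{n-s}f^{m-n}$ and iterate the valid-prefix decomposition on the intermediate product $g_{\vec{k}'}f^{n-s}$, which is itself a smaller instance of the same kind of object, with the overhang $\delta_{\vec{k}'}$ playing the role of a head.  After finitely many iterations every term has been reduced to an expression of the form $h\,g\,f^{m-n}\,r$ with $r\in A\otimes A$, exhibiting the required membership in the right ideal.

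For the dimension bound, I organise $F(f,n)$ around bounded \emph{boundary data} rather than individual prefixes.  Each basis element is indexed by (i) the cutoff position $s\in[\lceil n/t\rceil,n]$, contributing a factor $\leq n$, (ii) the terminal monomial $a_{k_s}$ of degree $\leq t$ together with the overhang length $d$, contributing at most $t\cdot 2^{t+1}$ options, and (iii) a short boundary word of length $\leq t$ used to connect with the preceding factors, contributing $\leq 2^t$ options; all sequences $\vec{k}'$ with matching boundary data are summed into a single element of $F(f,n)$.  The product of these factors gives $\dim_K F(f,n) \leq n\cdot t\cdot 2^{t+1}\cdot 2^t = 2^{2t+1}tn$, as required.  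The main obstacle will be the iteration in the previous paragraph: one must verify that the decomposition of $g_{\vec{k}'}f^{n-s}$ does not introduce genuinely new boundary patterns at each round, so that everything stays inside a single $F(f,n)$ of the stated dimension.  This is the point at which the \emph{linear} (rather than exponential) dependence of $\dim F(f,n)$ on $n$ is engineered, and it mirrors the corresponding step in Lemma~1 of \cite{s} and Lemma~7 of \cite{n}, from which the proof is adapted.
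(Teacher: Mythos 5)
Your minimal-prefix decomposition and your plan to group prefixes by ``boundary data'' are essentially the paper's argument: the paper sums, for each overhang degree $j\le t-1$ and each prefix length $k\le n$, all prefixes into the single homogeneous component $v_{j,k}$ of $f^{k}$ of left degree $n-j$, and then lets only the short boundary words vary, taking $F(f,n)=\sum_{i=0}^{t}\sum_{j=0}^{t-1}\sum_{k}v_{j,k}(M(j)\otimes A(i))$; this is exactly what makes the dimension linear in $n$ and gives $\dim_{K}F(f,n)<2^{2t+1}tn$, so your sketch (i)--(iii) should be tightened to that. The genuine gap is precisely where you flag the ``main obstacle'': the prefixes of length $s<n$. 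The iteration you propose on $g_{\vec{k}'}f^{\,n-s}$ is not only unverified, it cannot produce what the lemma asks for: each further application of the valid-prefix decomposition consumes additional factors of $f$, so the power of $f$ standing next to the newly created head is strictly smaller than $m-n$, and you never reach terms of the literal form $h\,g\,f^{m-n}\,r$; in addition, each round risks enlarging $F(f,n)$ beyond the stated bound.

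No iteration is needed, because the tail factors are all powers of the same element $f$ and therefore commute among themselves. Writing $f^{\,m-s}=f^{\,m-n}\cdot f^{\,n-s}$ (rather than $f^{\,n-s}f^{\,m-n}$, as you did), your term becomes
\[
h_{\vec{k}'}\,g_{\vec{k}'}\,f^{\,m-s}=\bigl(h_{\vec{k}'}\,g_{\vec{k}'}\,f^{\,m-n}\bigr)\,f^{\,n-s},
\]
which already lies in the right ideal generated by the elements $h\,g\,f^{m-n}$, with right cofactor $r=f^{\,n-s}$ (and $r=1$ when $s=n$). This is exactly how the paper treats it: its terms are $v_{j,k}f_{j+i}f^{\,m-k-1}$ with $k\le n-1$, and the surplus factor $f^{\,n-k-1}$ is simply absorbed into the arbitrary element of $A\otimes A$ allowed on the right. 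With that one-line substitution replacing your iteration, and with the dimension count organised around the components $v_{j,k}$ as above, your argument coincides with the paper's proof.
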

\begin{proof} We will modify, in a straightforward way, the proof of Lemma $7$ in \cite{n}, when we only consider the first component in $A\otimes A$.
  We write $f=\sum _{i=1}^{t}f_{i}$, where $f_{i}\in A(i)\otimes A$; thus $f^{m}$ is a sum of terms of the form
 $f_{i_{1}}f_{i_{2}}\ldots    f_{i_{k}}f_{i_{k+1}}\ldots  f_{i_{m}}$, where k denotes the largest subscript such that $i_{1} +i_{2} +···+i_{k} \leq  n$. We then have the decomposition:
\[f^{m} =\sum_{k\leq n}\sum_{i_{1}+\ldots +i_{k}=n}f_{i_{1}}\ldots f_{i_{k}}f^{m-k}+\sum_{j=1}^{t-1}(\sum_{i_{1}+\ldots +i_{k}=n-j, i_{k+1}>j}f_{i_{1}}\ldots f_{i_{k}}f_{i_{k+1}}f^{m-k-1}
))\]

Hence 
\[f^{n}=\sum_{k\leq n}v_{0,k}f^{m-k}+\sum_{k\leq n-1}(\sum_{j=1}^{t-1}v_{j,k}(f_{j+1}+f_{j+2}+\ldots +f_{t})f^{m-k-1})\]

where $v_{j,k}$ is the homogeneous component of degree $n - j$ in $f^{k}$ with respect to the first component in the tensor product (that means that $f^{k}=\sum_{j}v_{j,k}$ and  $v_{j,k}\in A(n-j)\otimes A$, for each $j$).

 Taking $F(f,n) = \sum_{i=0}^{t}\sum_{j=0}^{t-1} \sum_{k=1}^{n}v_{j,k}(M(j)\otimes A(i))$, the representation above has the desired properties (where $K$ is the base field). To see this, note that in the second summation each term $v_{j,k}f_{j+i}f^{m-k-1}$ lies in the span of all members of  $v_{j,k}(M(j+i)\otimes \sum_{j'=1}^{t}M(j'))(M(i))f^{m-n}f^{n-k-1}\subseteq  v_{j,k}(M(j)\otimes \sum_{j'=1}^{t}M(j'))(M(i)\otimes K)f^{m-n}(A\otimes A)$.  As elements of $v_{j,k}(M(j)\otimes \sum_{j'=1}^{t}M(j'))$ lie in $F(f,n)$, and elements of $M(i)$ have degree $< t$, the product resides in the desired right ideal. The same is still more easily seen for terms of the first summation. Notice that if we want $v_{j,k}$ to be nonzero, then we necessarily must have ${{n-j}\over t} \leq k \leq  n-j$. We compute
\[\dim _{K} G(f,n)<\sum_{j=0}^{t-1}\sum_{k\geq {(n-j)\over t}}^{n-j}2^{t}2^{t+1}\leq 2^{2t+1}tn\]
\end{proof}

 Let $i,j$ be nonnegative integers. For $a\in A(i)\otimes  A(j)$ we denote the right degree of $a$ as $r(a)$, and the left degree of $a$ by $l(a)$. Here, $l(a)=i$, $r(a)=j$. 

We will now give a generalization of this lemma.
\begin{lemma}\label{left2}  Let $f \in \sum_{1\leq i,j\leq t}A(i)\otimes  A(j)$  for some $t  \geq 1$. For each $n >t$ there exists a linear space  
 $F(f,n)\subseteq A(n)\otimes \sum_{i\geq {n\over 2t}}^{2tn}A(i)$   with
 $ \dim _{K} F(f,n)\leq 2^{2t+1}tn$ such that, for every integer $ m \geq n$,  $f^{m}$ belongs to the right ideal of $A\otimes A$ generated by elements of the form $hgf^{m-n}$, where $h\in F(f,n)$ and $g\in \sum_{0\leq i\leq t}A(i)\otimes K$.
\end{lemma}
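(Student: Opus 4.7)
The plan is to essentially re-run the proof of Lemma \ref{left1}, but additionally keep track of the second tensor component degrees of the pieces $v_{j,k}$ which appear in $F(f,n)$. The extra input is the hypothesis $f\in\sum_{1\le i,j\le t}A(i)\otimes A(j)$, which constrains the second component as well as the first: each factor in a product expansion of $f^k$ contributes a degree between $1$ and $t$ to \emph{both} tensor slots. This is the only piece of information that was not already exploited in Lemma \ref{left1}.

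Concretely, I would write $f=\sum_{i=1}^{t}f_i$ with $f_i\in A(i)\otimes\sum_{j=1}^{t}A(j)$ and carry out the same telescoping decomposition of $f^m$ as in the proof of Lemma \ref{left1}, giving
\[
f^{m}=\sum_{k\le n}v_{0,k}f^{m-k}+\sum_{k\le n-1}\sum_{j=1}^{t-1}v_{j,k}(f_{j+1}+\cdots+f_t)f^{m-k-1},
\]
where $v_{j,k}$ is the first-component-degree-$(n-j)$ part of $f^k$. I would then define
\[
F(f,n)=\sum_{i=0}^{t}\sum_{j=0}^{t-1}\sum_{k=1}^{n}v_{j,k}\bigl(M(j)\otimes A(i)\bigr),
\]
exactly as before. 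The decomposition identity and the dimension estimate $\dim_K F(f,n)\le 2^{2t+1}tn$ are unchanged from Lemma \ref{left1} because the arithmetic of first-component degrees and the counting of nonzero $v_{j,k}$ are unaffected.

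The one genuinely new step is the containment $F(f,n)\subseteq A(n)\otimes \sum_{i\ge n/(2t)}^{2tn}A(i)$. For this I would argue as follows. Since each $f_i$ lives in $A(i)\otimes\sum_{\ell=1}^{t}A(\ell)$, any product of $k$ such factors has its second-component degree in the interval $[k,tk]$; hence the homogeneous piece $v_{j,k}\in A(n-j)\otimes A$ has second-component degree in $[k,tk]$. For $v_{j,k}$ to be nonzero we need $(n-j)/t\le k\le n-j$, so the second-component degree of $v_{j,k}$ lies in $[\,(n-j)/t,\;t(n-j)\,]\subseteq [\,(n-t)/t,\;tn\,]$. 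Multiplying by $M(j)\otimes A(i)$ with $j\le t-1$, $i\le t$ increases the second-component degree by at most $t$, so elements of $F(f,n)$ have their second-component degree in $[\,(n-t)/t,\;tn+t\,]$. For $n>t$ one checks $(n-t)/t\ge n/(2t)$ as soon as $n\ge 2t$, and $tn+t\le 2tn$ as soon as $n\ge 1$; the small residual range $t<n<2t$ is finite and is absorbed into a harmless adjustment of the endpoints (equivalently, one can simply start the outer recursion in the applications at $n\ge 2t$, which is how this lemma is eventually used).

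The main technical wrinkle, and essentially the only one, is this bookkeeping of the second-component degrees; everything else, including the right-ideal representation and the dimension count, is copied from Lemma \ref{left1}. There is no new combinatorial idea needed.
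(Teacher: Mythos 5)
Your proposal follows essentially the same route as the paper: keep the set $F(f,n)$ and the telescoping decomposition from Lemma \ref{left1} verbatim, and simply track the degrees in the second tensor slot, using that every factor $f_{i,j}$ contributes between $1$ and $t$ to each slot. The dimension count and the right-ideal representation are indeed unchanged, and your interval bookkeeping gives the containment for $n\geq 2t$. The one place where your write-up is off is the residual range $t<n<2t$: the two escape routes you offer are not legitimate, since adjusting the endpoints would change the statement, and the lemma \emph{is} invoked at such arguments (Lemma \ref{introducingT} assumes only $n>2t$ and uses $F(f,n-i)$ for $0\leq i\leq t$, so the argument can be as small as $t+1$). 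Fortunately the gap is closed by a one-line remark you did not make: every nonzero $v_{j,k}$ has second-slot degree at least $k\geq 1$, and for $t<n<2t$ one has ${n\over 2t}<1$, so the lower endpoint ${n\over 2t}$ is automatic. The paper avoids the case split altogether by a multiplicative bound: writing each summand as a product of factors $f_{i,j}$, with the trailing piece of $M(j)\otimes A(i)$ grouped together with the last factor $f_{i_k,j_k}$, every factor $g$ satisfies ${1\over 2t}l(g)\leq r(g)\leq 2t\,l(g)$, hence so does the product, giving $r\in[{n\over 2t},2tn]$ uniformly for all $n>t$. With the integer-degree remark added, your proof is correct and is essentially the paper's.
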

\begin{proof} With the notation from the previous lemma, we have $f=\sum _{i=1}^{t}f_{i}$, where $f_{i}\in A(i)\otimes (A(1)+\ldots +A(t))$. We can now write 
$f_{i}=\sum_{j=1}^{t}f_{i,j}$, where $f_{i,j}\in A(i)\otimes A(j)$.  
 Observe that for every $i, j$, $l(f_{i,j})\leq t\cdot r(f_{i,j})$ and $r(f_{i,j})\leq t\cdot l(f_{i,j})$, therefore ${1\over t}\cdot l(f_{i,j})\leq r(f_{i,j})\leq t\cdot l(f_{i,j})$. Similarly if $0\leq i,j\leq t$, then for an element $g\in f_{i,j}(A(i)\otimes A(j))$ we have 
 ${1\over 2t}\cdot l(g)\leq r(g)\leq 2t\cdot l(g)$.
 It follows that if $m\in A\otimes A$ is a product of some elements $f_{i,j}$  or such elements $g$ then 
${1\over 2t}\cdot l(m)\leq r(m)\leq 2t\cdot l(m)$.  
 By the definition of the set $F(f,n)$ from the previous Lemma, and because $n>t$,  we get that any element from $F(f,n)$ there is a sum of elements $g$ with 
${l(g)\over 2t}\leq r(g)\leq 2t\cdot l(g)$.
  By the previous lemma $F(f,n)\subseteq A(n)\otimes A$ and  hence $F(f,n)\subseteq A(n)\otimes \sum_{i\geq {n\over 2t}}^{2tn}A(i),$ as required. 
Also by construction of element $g$ in thesis of  Lemma \ref{left1}  we have  $g\in \sum_{0\leq i\leq t}A(i)\otimes K$, as required. 
\end{proof}
\begin{lemma}\label{introducingT} Let $f\in H$ where $H=\sum_{1\leq i,j\leq t}A(i)\otimes A(j)$ for some $t\geq 0$, and let $n>2t$. Let $F(f,r)$ be as in Lemma \ref{left2}.
Let $T(f, n)$ be the smallest homogeneous linear space containing  $\bigcup _{0\leq i,j\leq t}(A(i)\otimes A(j))F(f,n- i)$. Then 
 $T(f,n)\subseteq A(n )\otimes \sum_{i\geq {{n-t}\over 2t}}^{2tn+t}A(i)$ and 
 $ \dim _{K} T(f,n )< 2^{4t+3}n^{3}t^{2}$. Moreover, for every $m>n$, $f^{m}, H'f^{m}\subseteq T(f,n)H'f^{m-n}(A\otimes A),$ where $H'=\sum_{0\leq i,j\leq t}A(i)\otimes A(j)$.
\end{lemma}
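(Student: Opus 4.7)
The lemma bundles three claims: a degree containment for $T(f,n)$, a dimension bound, and a ``pumping'' statement that $H'f^{m}$ can be rewritten with a $T(f,n)H'$-prefix. My plan is to dispatch the first two by direct bookkeeping built on Lemma \ref{left2}, and then reduce the third to two applications of Lemma \ref{left2}, using that powers of the same element commute inside $A\otimes A$.

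First, the degree inclusion. By Lemma \ref{left2}, $F(f,n-i)\subseteq A(n-i)\otimes \sum_{j'\geq (n-i)/(2t)}^{2t(n-i)}A(j')$ whenever $n-i>t$, which holds since $n>2t\geq 2i$. For $0\le i,j\le t$, multiplying on the left by $A(i)\otimes A(j)$ shifts the left degree by $i$ and the right degree by $j$, so each summand $(A(i)\otimes A(j))F(f,n-i)$ lies in $A(n)\otimes \sum_{k\ge (n-i)/(2t)}^{2t(n-i)+j}A(k)\subseteq A(n)\otimes \sum_{k\ge (n-t)/(2t)}^{2tn+t}A(k)$, giving the claimed containment.

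For the dimension estimate, I bound each summand: $\dim_K(A(i)\otimes A(j))F(f,n-i)\le 2^{i+j}\cdot 2^{2t+1}t(n-i)\le 2^{4t+1}tn$ by Lemma \ref{left2}. Summing over the at most $(t+1)^2$ pairs $(i,j)$ with $0\le i,j\le t$ yields $\dim_K T(f,n)\le (t+1)^2 2^{4t+1}tn$. Since $n>2t\ge 2$ one checks $(t+1)^2\le 4t^2\le 4n^2 t$ for $t\ge 1$ (and the lemma is vacuous for $t=0$), so $(t+1)^2 2^{4t+1}tn<2^{4t+3}n^3t^2$, as required.

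For the pumping statement, note first that $F(f,n)=(A(0)\otimes A(0))F(f,n)\subseteq T(f,n)$. Lemma \ref{left2} applied with the given $n$ immediately yields $f^{m}\in F(f,n)\bigl(\sum_{0\le i\le t}A(i)\otimes K\bigr)f^{m-n}(A\otimes A)\subseteq T(f,n)H'f^{m-n}(A\otimes A)$, since $\sum_{i=0}^{t}A(i)\otimes K\subseteq H'$. For a general $h'\in A(i)\otimes A(j)$ with $0\le i,j\le t$, apply Lemma \ref{left2} again but with $n-i$ in place of $n$ (legal since $n-i>t$), obtaining $f^{m}\in F(f,n-i)\bigl(\sum_{0\le i'\le t}A(i')\otimes K\bigr)f^{m-(n-i)}(A\otimes A)$. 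Premultiplying by $h'$ and using $h' F(f,n-i)\subseteq T(f,n)$ by the very definition of $T(f,n)$, we get $h'f^{m}\in T(f,n)H'f^{m-n+i}(A\otimes A)$. The one nontrivial point is to move $f^{m-n+i}$ into the required form $f^{m-n}(A\otimes A)$; this is where I use that $f^{i}$ and $f^{m-n}$ commute in $A\otimes A$ (powers of the same element), so $f^{m-n+i}=f^{m-n}f^{i}$ and $f^{i}$ is absorbed into the trailing $(A\otimes A)$ factor. Summing over a basis of $H'$ gives $H'f^{m}\subseteq T(f,n)H'f^{m-n}(A\otimes A)$, completing the proof. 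The only mild subtlety is this commutation step; everything else is routine bookkeeping.
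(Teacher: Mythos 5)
Your treatment of the degree containment and of the pumping statement is correct and essentially identical to the paper's own argument (and the ``commutation'' worry at the end is a non-issue: $f^{m-n+i}=f^{m-n}f^{i}$ is just the law of exponents, and $f^{i}$ is absorbed into the trailing $(A\otimes A)$, exactly as the paper implicitly does when it stops at $T(f,n)H'f^{m-n+i}(A\otimes A)$). The problem is the dimension estimate. You bound $\dim_K T(f,n)$ by the dimension of the \emph{span} of $\bigcup_{0\leq i,j\leq t}(A(i)\otimes A(j))F(f,n-i)$, summing $2^{i+j}\dim_K F(f,n-i)$ over the pairs $(i,j)$. But $T(f,n)$ is defined as the smallest \emph{homogeneous} linear space containing that union, and the spaces $F(f,n-i)$ are homogeneous only in the left tensor coordinate: by the construction in Lemmas \ref{left1}--\ref{left2} they are spanned by elements $v_{j,k}(M(j)\otimes A(i))$ with $v_{j,k}\in A(n-j)\otimes A$, whose right degrees are mixed. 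Passing to bidegree components can therefore enlarge the dimension, by a factor of up to the number of admissible right degrees (roughly $2tn+t$, since the left degree is pinned at $n$). This is precisely why the paper's own estimate carries the extra factor $(2tn+t)$: it bounds $\dim_K T(f,n)< 2^{2t+1}tn\cdot 2^{2t+2}\cdot(2tn+t)\leq 2^{4t+3}t^{2}n^{3}$.

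Moreover, the gap is not repaired by simply inserting this factor into your computation, because your per-pair constant is lossier than the paper's: you replace $\sum_{0\leq i,j\leq t}2^{i+j}<2^{2t+2}$ by $(t+1)^{2}2^{2t}$, and $(t+1)^{2}2^{2t}\cdot 2^{2t+1}tn\cdot(2tn+t)$ need not stay below $2^{4t+3}n^{3}t^{2}$ when $n$ is close to its minimum $2t+1$ and $t$ is moderately large (the required inequality reduces to roughly $(t+1)^{2}<2n$, which fails for $t\geq 3$, $n=2t+1$). So to get the stated bound you must both account for the homogeneous closure and use the sharper summation $\sum_{i,j}2^{i+j}<2^{2t+2}$, i.e.\ redo the count as in the paper.
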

\begin{proof} 
By Lemma \ref{left2}, $F(f,n-i)\subseteq  A(n-i)\otimes \sum_{i\geq {n-i\over 2t}}^{2t(n-i)}A(i)$. Hence,
$F(f,n-i)(A(i)\otimes A(j))\subseteq A(n)\otimes  \sum_{i\geq {{n-t}\over 2t}}^{2tn+t}A(i)$. 
 By the definition of $T(f,n)$ we get $T(f,n)\subseteq A(n )\otimes \sum_{i\geq {{n-t}\over 2t}}^{2tn+t}A(i)$.

By Lemma \ref{left2},  $\dim _{K} F(f, n-i)<2^{2t+1} tn$,  also $\dim _{K}A(i)=2^{i}$, hence  
$\dim _{K}T(f, n)< 2^{2t+1}tn\cdot 2^{2t+2}\cdot (2tn +t)\leq 2^{4t+3}t^{2}n ^{3} $ (since $n>2t\geq 2$). 

Let $i,i'\leq t.$ Observe now that by Lemma \ref{left2} applied for $n-i$, $(A(i)\otimes A(i'))f^{m}\subseteq (A(i)\otimes A(i'))F(n-i)H'f^{m-n+i}(A\otimes A)\subseteq T(f,n)H'f^{m-n+i}(A\otimes A),$
 as required.
\end{proof} 

 The following lemmas can be proven in the same way as Lemma \ref{left2},  when we interchange the left side with the right side.
 \begin{lemma}\label{r3}  Let $f \in \sum_{1\leq i,j\leq t}A(i)\otimes  A(j)$  for some $t  \geq 1$. For each $n >t$ there exists a linear space  
 $F'(f,n)\subseteq \sum_{i\geq {n\over 2t}}^{2tn}A(i)\otimes A(n)$  with
 $ \dim _{K} F'(f, n)\leq 2^{2t+1}tn$ such that, for every integer $ m \geq n$,  $f^{m}$ belongs to the right ideal of $A\otimes A$ generated by elements of the form $hgf^{m-n}$, where $h\in F'(f,n)$ and $g\in K\otimes \sum_{0\leq i\leq t}A(i)$.
\end{lemma}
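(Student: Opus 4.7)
The plan is to mirror the proof of Lemma \ref{left2} by exchanging the roles of the two tensor factors throughout. The statement of Lemma \ref{r3} is the right-handed dual of Lemma \ref{left2}: the tensor factor of fixed degree $n$ has moved from the left to the right, and the ``remainder'' element $g$ has correspondingly moved from the left factor to the right factor. Accordingly, the entire argument should carry over with ``left degree'' replaced by ``right degree'' everywhere.

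Concretely, I would begin by decomposing $f = \sum_{j=1}^{t} f^{(j)}$ where $f^{(j)} \in A \otimes A(j)$, and further write each $f^{(j)} = \sum_{i=1}^{t} f_{i,j}$ with $f_{i,j} \in A(i) \otimes A(j)$. Expanding $f^m$ as a sum of products $f^{(j_1)} \cdots f^{(j_m)}$, I then let $k$ be the largest index with $j_1 + \cdots + j_k \leq n$; collecting terms yields a decomposition
\[f^m = \sum_{k \leq n} u_{0,k} f^{m-k} + \sum_{k \leq n-1} \sum_{j=1}^{t-1} u_{j,k} (f^{(j+1)} + \cdots + f^{(t)}) f^{m-k-1},\]
where $u_{j,k}$ is the homogeneous component of $f^k$ of right degree $n-j$, i.e., $u_{j,k} \in A \otimes A(n-j)$. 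I then set $F'(f,n) = \sum_{i=0}^{t} \sum_{j=0}^{t-1} \sum_{k=1}^{n} u_{j,k}(A(i) \otimes M(j))$. Each summand of the decomposition above can be rewritten, using right-multiplication by a suitable power of $f$ (legitimate since $k \leq n$, respectively $k \leq n-1$), as $h g f^{m-n} x$ for some $h \in F'(f,n)$, $g \in K \otimes \sum_{0 \leq i \leq t} A(i)$ and $x \in A \otimes A$, so that $f^m$ lies in the right ideal generated by such $hgf^{m-n}$.

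The dimension estimate $\dim_K F'(f,n) \leq 2^{2t+1} tn$ comes from exactly the same counting as in Lemma \ref{left1}, since the ranges of the indices $(i, j, k)$ are unchanged. The refined containment $F'(f,n) \subseteq \sum_{i \geq n/(2t)}^{2tn} A(i) \otimes A(n)$ follows from the observation (already used in passing from Lemma \ref{left1} to Lemma \ref{left2}) that any monomial $w$ formed from products of the $f_{i,j}$ satisfies $l(w)/t \leq r(w) \leq t \cdot l(w)$, and the extra factors $A(i) \otimes M(j)$ perturb this ratio by at most another factor of $2$; applied with $r(\cdot) = n$ fixed, this forces the left tensor degree into $[n/(2t), 2tn]$. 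The only step that requires genuine attention rather than mechanical translation is bookkeeping: because $A \otimes A$ is noncommutative in \emph{both} factors, when one moves $g$ from the left factor to the right factor one must check that the written order $h \cdot g \cdot f^{m-n}$ is still consistent with the right-ideal structure (i.e., that the remainder $g$ is correctly absorbed into the initial $k{+}1$ factors rather than the trailing ones). This is the main, and essentially the only, obstacle; everything else is a verbatim duplication of the previous proof.
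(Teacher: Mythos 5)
Your proposal is correct and coincides with the paper's own argument: the paper proves Lemma \ref{r3} simply by declaring that it follows ``in the same way as Lemma \ref{left2}, when we interchange the left side with the right side,'' and your mirrored decomposition with $u_{j,k}\in A\otimes A(n-j)$, the set $F'(f,n)=\sum u_{j,k}(A(i)\otimes M(j))$, and the unchanged dimension count and degree-ratio bound is exactly that left--right dual argument spelled out. The bookkeeping point you flag is harmless, since a pure tensor $a\otimes b$ with $b=b_1b_2$ factors as $(a\otimes b_1)(1\otimes b_2)$, so the remainder $g\in K\otimes\sum_{0\leq i\leq t}A(i)$ is absorbed exactly as in the left-handed case.
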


\begin{lemma}\label{introducingT'} Let $f\in H$ where $H=\sum_{1\leq i,j\leq t}A(i)\otimes A(j)$ for some $t\geq 0$, and let $n>2t$. Let $F'(f,r)$ be as in Lemma \ref{r3}.
Let $T'(f, n)$ be the smallest homogeneous linear space containing $\bigcup _{0\leq i,j\leq t}(A(i)\otimes A(j))F'(f,n- j)$. Then 
 $T'(f,n)\subseteq \sum_{i\geq {{n-t}\over 2t}}^{2tn+t }A(i)\otimes A(n)$ and 
 $ \dim _{K} T'(f,n )< 2^{4t+3}n^{3}t^{2} $. Moreover,  for all $n$, we have 
for every integer $ m \geq n$,  $H'f^{m}\subseteq T'(f,n)H'f^{m-n}$, where $H'=\sum_{0\leq i,i'\leq t}A(i)\otimes A(i')$.
\end{lemma}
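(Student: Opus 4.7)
The approach is to mirror the proof of Lemma~\ref{introducingT} verbatim, with Lemma~\ref{r3} playing the role of Lemma~\ref{left2} and the first and second tensor factors interchanged throughout. Three items need verification: the ambient containment of $T'(f,n)$, the dimension bound, and the main inclusion for $H'f^{m}$.

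For the first two, by Lemma~\ref{r3} each space $F'(f,n-j)$ lies in $\sum_{i\geq (n-j)/2t}^{2t(n-j)}A(i)\otimes A(n-j)$ and has dimension at most $2^{2t+1}t(n-j)\leq 2^{2t+1}tn$. Left-multiplying by $A(i)\otimes A(j)$ with $i,j\leq t$ sends the second-component degree to exactly $n$ and shifts the first-component degree by at most $t$, so after ranging over the pairs $(i,j)\in[0,t]^{2}$ the first-component degree of $T'(f,n)$ lies in $[(n-t)/2t,\,2tn+t]$ as claimed. The dimension bound follows because each summand $(A(i)\otimes A(j))F'(f,n-j)$ has dimension at most $4^{t}\cdot 2^{2t+1}tn=2^{4t+1}tn$, and there are $(t+1)^{2}$ summands, so $\dim_{K}T'(f,n)\leq (t+1)^{2}\cdot 2^{4t+1}tn<2^{4t+3}n^{3}t^{2}$ under the standing assumption $n>2t\geq 2$.

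For the main inclusion, fix a homogeneous summand $a\in A(i')\otimes A(j')$ of $H'$ with $i',j'\leq t$. Since $n>2t$ we have $n-j'>t$, so Lemma~\ref{r3} applies with parameter $n-j'$, exhibiting $f^{m}$ via products of the form $h\cdot g\cdot f^{m-n+j'}$ with $h\in F'(f,n-j')$ and $g\in K\otimes(A(0)+\ldots+A(t))\subseteq H'$. Left-multiplying by $a$ places $af^{m}$ inside $(A(i')\otimes A(j'))F'(f,n-j')\cdot H'\cdot f^{m-n+j'}$, and by construction the prefactor $(A(i')\otimes A(j'))F'(f,n-j')$ is contained in $T'(f,n)$. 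Finally, since $f$ commutes with itself, $f^{m-n+j'}=f^{m-n}\cdot f^{j'}$, and the trailing factor $f^{j'}$ is absorbed in the natural way, yielding the stated $H'f^{m}\subseteq T'(f,n)H'f^{m-n}$.

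The one nonroutine step --- and the main obstacle --- is applying Lemma~\ref{r3} at parameter $n-j'$ rather than $n$. This is forced by the fact that $F'$ controls the second-tensor-factor degree: the left prefactor $a\in A\otimes A(j')$ must raise that degree from $n-j'$ up to exactly $n$, so that $(A(i')\otimes A(j'))F'(f,n-j')$ falls inside $T'(f,n)$. After this index choice is made, everything else is a direct transcription of the proof of Lemma~\ref{introducingT}, with the roles of the two tensor factors swapped.
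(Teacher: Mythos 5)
Your overall strategy is exactly the paper's intended one: the paper proves only Lemma \ref{introducingT} in detail and declares the primed versions symmetric, and your mirroring (Lemma \ref{r3} in place of Lemma \ref{left2}, application at parameter $n-j'$ so that the second-component degree lands on $n$) is the right transcription. However, your dimension count has a genuine gap. $T'(f,n)$ is not the span of $\bigcup_{0\le i,j\le t}(A(i)\otimes A(j))F'(f,n-j)$; it is the smallest \emph{homogeneous} subspace containing that union, and $F'(f,n-j)$ is not spanned by bihomogeneous elements: by the construction mirrored from Lemma \ref{left1}, its generators have second-component degree exactly $n-j$ but first-component degrees spread over an interval of length roughly $2t(n-j)$. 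Passing to the homogeneous closure can therefore multiply the dimension by the number of occurring first-component degrees, which is only bounded by about $2tn+t$; this is precisely the extra factor $(2tn+t)$ appearing in the paper's estimate for $T(f,n)$. Your inequality $\dim_K T'(f,n)\le (t+1)^2 2^{4t+1}tn$ is thus unjustified, and it cannot be repaired just by inserting the missing factor into your count: $(t+1)^2 4^t\cdot 2^{2t+1}tn\cdot(2tn+t)$ exceeds $2^{4t+3}n^3t^2$ for admissible parameters such as $t=3$, $n=7$. You need the sharper bookkeeping of the paper's proof of Lemma \ref{introducingT}: $\sum_{0\le i,j\le t}\dim_K\bigl(A(i)\otimes A(j)\bigr)=(2^{t+1}-1)^2<2^{2t+2}$, which gives $\dim_K T'(f,n)<2^{2t+1}tn\cdot 2^{2t+2}\cdot(2tn+t)=2^{4t+3}t^2n(2n+1)\le 2^{4t+3}t^2n^3$ since $n>2t\ge 2$ forces $n\ge 3$.

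Second, the final ``absorbed in the natural way'' step does not deliver the inclusion in the form you state it. Lemma \ref{r3} only places $f^m$ in the \emph{right ideal} generated by the elements $hgf^{m-n+j'}$, and the leftover factor $f^{j'}$ can likewise only be absorbed into a trailing $(A\otimes A)$; what your argument actually yields is that $H'f^m$ lies in $T'(f,n)H'f^{m-n}+T'(f,n)H'f^{m-n}(A\otimes A)$, i.e.\ in the right ideal generated by $T'(f,n)H'f^{m-n}$. This is consistent with Lemmas \ref{introducingT} and \ref{smaller} and with how Lemma \ref{introducingT'} is invoked in Lemma \ref{E}, so the absence of the factor $(A\otimes A)$ in the statement is evidently a misprint; you should prove (and flag) the version with that factor rather than assert the bare inclusion, which your argument does not establish when $j'>0$ or when the right-ideal closure from Lemma \ref{r3} is actually used.
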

\begin{lemma}\label{smaller} Let $f\in H$ where $H=\sum_{1\leq i,j\leq t}A(i)\otimes A(j)$ for some $t\geq 0$, and let $n\leq 2t$.
 Define $T'(f,n)=\sum_{i\geq n-t}^{2tn}A(i)\otimes A(n)$.
Then $ \dim _{K} T'(f,n )< 2^{2tn+1}\leq 2^{2t^{2}+1}.$ Moreover, for all $n\leq 2t$, and  
for every integer $ m \geq n$,  $H'f^{m}\in T'(f,n)H'f^{m-n}(A\otimes A)$, where $H'=\sum_{0\leq i,j\leq t}A(i)\otimes A(j)$.
\end{lemma}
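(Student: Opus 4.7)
The plan is to handle the two conclusions of the lemma separately: the dimension estimate, and the containment $H'f^{m}\subseteq T'(f,n)H'f^{m-n}(A\otimes A)$.

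For the dimension, I would compute directly. Since $A$ is the free algebra on two generators, $\dim_{K}A(i)=2^{i}$, so
$\dim_{K}T'(f,n)\leq \sum_{i=\max(0,n-t)}^{2tn}2^{i+n}$,
and a routine geometric-series estimate bounds this by the claimed $2^{2tn+1}$ (with the subsequent inequality $2^{2tn+1}\leq 2^{2t^{2}+1}$ following from $n\leq 2t$, up to the constants as written). This part is mechanical.

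For the containment, my plan is a monomial decomposition in the spirit of Lemma \ref{left1}, but considerably simpler because $n$ is small and we can afford to use the full generic space $T'(f,n)$ rather than a refined $F(f,n-i)$. Writing $hf^{m}=(hf^{n})\cdot f^{m-n}$ for $h\in H'$, I would expand $hf^{n}$ as a sum of elementary monomial tensors $P\otimes Q$ with $|P|,|Q|\in[n,t(n+1)]$. The key observation is that, because $n\leq 2t$ and (disposing of the trivial case $n=0$) $n\geq 1$, one has $t(n+1)\leq 2tn$, so each $|P|$ automatically lies in the range $[n-t,2tn]$ that defines $T'(f,n)$. For each monomial I would split $P=P_{1}P_{2}$ and $Q=Q_{1}Q_{2}$ with $|P_{1}|=|Q_{1}|=n$; the factorization $(P\otimes Q)=(P_{1}\otimes Q_{1})(P_{2}\otimes Q_{2})$ then places the first factor in $A(n)\otimes A(n)\subseteq T'(f,n)$.

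The main technical obstacle, and the most delicate step of the proof, is how to realize the residual product $(P_{2}\otimes Q_{2})\cdot f^{m-n}$ in the prescribed form $\beta f^{m-n}\gamma$ with $\beta\in H'$ and $\gamma\in A\otimes A$. Since the degrees of $P_{2},Q_{2}$ can exceed $t$ (they are bounded by $n(t-1)+t$), the pair $(P_{2}\otimes Q_{2})$ is not itself in $H'$. My plan is to split $(P_{2}\otimes Q_{2})=(P_{2,1}\otimes Q_{2,1})(P_{2,2}\otimes Q_{2,2})$ with $|P_{2,1}|,|Q_{2,1}|\leq t$ so that $(P_{2,1}\otimes Q_{2,1})\in H'$, and then to absorb the leftover $(P_{2,2}\otimes Q_{2,2})\cdot f^{m-n}$ into the trailing $(A\otimes A)$ slot, exploiting both the fact that $(A\otimes A)\cdot f^{m-n}$ still lives inside $A\otimes A$ and the freedom built into the generic definition of $T'(f,n)$ in the small-$n$ regime. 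Once this absorption step is carried out and one sums over monomial components of $hf^{n}$, the claimed inclusion follows, completing the proof.
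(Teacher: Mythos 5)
Your dimension estimate is fine in spirit (though note that summing $2^{i+n}$ up to $i=2tn$ actually gives roughly $2^{2tn+n+1}$, so even the paper's stated constant is loose; this is harmless), but the containment argument has a genuine gap at exactly the step you yourself flag as delicate, and the proposed repair cannot work. After splitting a monomial $P\otimes Q$ of $hf^{n}$ as $(P_{1}\otimes Q_{1})(P_{2}\otimes Q_{2})$ with $P_{1}\otimes Q_{1}\in A(n)\otimes A(n)$, you still have to place $(P_{2}\otimes Q_{2})f^{m-n}$ inside $H'f^{m-n}(A\otimes A)$. Since $|P_{2}|,|Q_{2}|$ can exceed $t$ (the right degree of a monomial of $hf^{n}$ can be as large as $t(n+1)$, so the residue after removing right degree exactly $n$ need not fit in $H'$), you propose to peel off an $H'$-piece and ``absorb'' the excess $(P_{2,2}\otimes Q_{2,2})f^{m-n}$ into the trailing $(A\otimes A)$. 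That is impossible as stated: the excess factor sits to the \emph{left} of $f^{m-n}$, while the $(A\otimes A)$ slot is to the \emph{right} of it, and in the tensor square of a free algebra nothing commutes past $f^{m-n}$. The only literal ``absorption'' available is to swallow $f^{m-n}$ itself into a generic factor, but that only yields $H'f^{m}\subseteq T'(f,n)H'(A\otimes A)$, which is strictly weaker than the lemma and useless later (Lemma \ref{E} and Theorem \ref{important2} iterate on the explicitly retained power of $f$). Nor is there any ``freedom in the generic definition of $T'(f,n)$'' to exploit: $T'(f,n)$ constrains only the leftmost factor (right degree exactly $n$), whereas the obstruction concerns the factor immediately adjacent to $f^{m-n}$.

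The structural reason a one-shot expansion of $hf^{n}$ fails is that the excess right degree is produced by the tails of \emph{several} copies of $f$, so it can neither be pushed into $H'$ nor regrouped with $f^{m-n}$. The paper avoids this by never expanding more than one factor of $f$ at a time: it argues by induction on $n$, peeling a single unit of right degree off the prefix via $A(i')=A(1)A(i'-1)$ (a manipulation entirely inside the prefix, never touching the power of $f$), and only when the prefix's right-hand component is exhausted does it multiply one factor of $f$ into the prefix, lowering the exponent by exactly one; surplus trailing copies of $f$ are then legitimately absorbed into the rightmost $(A\otimes A)$ because they sit at the right end, and the inclusion $(A(i)\otimes A(1))T'(f,n)\subseteq T'(f,n+1)$ closes the induction. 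Any repair of your argument must likewise ensure that the cut separating the controlled prefix from the surviving power of $f$ happens at a boundary between consecutive factors of $f$ (as in Lemma \ref{left1}, adapted to right degrees as in Lemma \ref{r3}), so that the overshoot is at most one factor of $f$, which does lie in $H'$; as written, your plan does not achieve this.
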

\begin{proof}
 We proceed by induction on $n$, let $n=1$. If  $1\leq i',$  then $(A(i)\otimes A(i'))f^{m}\subseteq (A(i)\otimes A(1))(K\otimes A(i'-1))f^{m}\subseteq T(f,1)H'f^{m}.$
 If $i'=0$ then $(A(i)\otimes K)f^{m}=((A(i)\otimes K)f)\cdot f^{m-1}\subseteq (\sum_{i=0}^{2t}A(i)\otimes A(1))(K\otimes \sum_{i<t}A(i))f^{m-1}\subseteq T(f,1)H'f^{m}.$
 We now assume the result is true for some number $n$ (and all numbers $m\leq n$), and prove it for $n+1.$ 

 Let $i'>0$ then  $(A(i)\otimes A(i'))f^{m}\subseteq (A(i)\otimes A(1))(K\otimes A(i'-1))f^{m}.$ 
By the inductive assumption, $(A(i)\otimes A(i'-1))f^{m}\subseteq 
T'(f,n)H'f^{m-n}(A\otimes A)$. Hence $(A(i)\otimes A(i'))f^{m}\subseteq (A(i)\otimes A(1))T'(f,n)H'f^{m-n}(A\otimes A)\subseteq T'(f,n+1)H'f^{m-n}(A\otimes A).$

If $i'= 0,$ then similarly as before $(A(i)\otimes A(i'))f^{m}\subseteq ((A(i)\otimes K)f)\cdot f^{m-1}\subseteq (\sum_{j=1}^{2t}A(j)\otimes A(1))H'f^{m-1}.$ 
 By the inductive assumption $H'f^{m-1}\subseteq T'(f,n)$. Notice that $(\sum_{j=1}^{2t}A(j)\otimes A(1))T(f,n)\subseteq T'(f,n+1),$ so $(A(i)\otimes A(i'))f^{m}\subseteq  T'(f,n+1)H'f^{m-1},$ as required
\end{proof}
\section{ Appendix: Proof of Theorem \ref{S}.\\ Linear spaces $G_{1}$ and $G_{2}$}\label{appendix2}
 We now introduce a simple lemma which can be read independently from the rest of this paper. 
\begin{lemma}\label{independent}
 Let $\alpha , t ,c$ be a natural numbers with $c\leq \alpha $, and let $G_{1}, G_{2}$ be homogeneous linear subspaces with 
\[G_{1}\subseteq  \sum_{i=\alpha }^{\alpha +t}A(i)\otimes \sum _{i=c}^{ \alpha +t}A(i),
 G_{2}\subseteq  \sum _{i=c}^{\alpha +t}A(i)\otimes \sum_{i=\alpha }^{\alpha +t}A(i).\]
  Then there exist linear spaces \[T_{\alpha }(G_{1})\subseteq A(\alpha )\otimes \sum_{i=c}^{\alpha }A(i), T_{\alpha }(G_{2})\subseteq  \sum_{i=c}^{\alpha }A(i)\otimes A(\alpha ),\] and such that
  \[G_{1}\subseteq T_{\alpha }(G_{1}) (\sum _{0\leq i,j\leq t}A(i)\otimes A(j)), G_{2}\subseteq T_{\alpha }(G_{2}) (\sum _{0\leq i,j\leq t}A(i)\otimes A(j)).\] Moreover, $\dim _{K} T_{\alpha }(G_{1})\leq 2^{2t+2} \dim _{K} G_{1}$ and $\dim _{K} T_{\alpha }(G_{2})\leq 2^{2t+2} \dim _{K} G_{2}$.
 \end{lemma}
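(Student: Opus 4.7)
The plan is to exploit the fact that multiplication in $A\otimes A$ is componentwise, so any homogeneous element of $G_1$ can be split as a \emph{head} of bidegree $(\alpha,\beta)$ with $c\leq\beta\leq\alpha$ times a \emph{tail} of bidegree $(i',j')$ with $0\leq i',j'\leq t$. Taking $T_\alpha(G_1)$ to be the linear span of all such heads will give both the required containment and the dimension estimate; the construction for $G_2$ is symmetric.

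More explicitly, I would choose a homogeneous $K$-basis $g_1,\dots,g_N$ of $G_1$, so each $g_k\in A(i_k)\otimes A(j_k)$ with $\alpha\leq i_k\leq\alpha+t$ and $c\leq j_k\leq\alpha+t$. Expanding $g_k=\sum_{p,q}\lambda_{p,q}^{(k)}\,u_p\otimes v_q$ in the monomial basis and setting $\beta_k=\min(j_k,\alpha)$, I decompose uniquely $u_p=u_p'u_p''$ with $u_p'\in M(\alpha)$, $u_p''\in M(i_k-\alpha)$, and $v_q=v_q'v_q''$ with $v_q'\in M(\beta_k)$, $v_q''\in M(j_k-\beta_k)$. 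Since $(a\otimes b)(c\otimes d)=ac\otimes bd$, grouping terms by the tail pair $(\tau_1,\tau_2)=(u_p'',v_q'')$ produces
\[
g_k=\sum_{\tau_1\in M(i_k-\alpha),\,\tau_2\in M(j_k-\beta_k)} h_{k,\tau_1,\tau_2}\cdot(\tau_1\otimes\tau_2),\qquad h_{k,\tau_1,\tau_2}=\!\!\!\sum_{\substack{p,q\\ u_p''=\tau_1,\,v_q''=\tau_2}}\!\!\!\lambda_{p,q}^{(k)}\,u_p'\otimes v_q'\,\in\,A(\alpha)\otimes A(\beta_k).
\]
I then define $T_\alpha(G_1)$ to be the $K$-span of all the heads $h_{k,\tau_1,\tau_2}$.

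Verification is routine. Since $c\leq\beta_k\leq\alpha$, each $h_{k,\tau_1,\tau_2}$ lies in $A(\alpha)\otimes\sum_{i=c}^{\alpha}A(i)$, so $T_\alpha(G_1)$ has the right ambient space. Each tail $\tau_1\otimes\tau_2$ has bidegree $(i_k-\alpha,\,j_k-\beta_k)=(i_k-\alpha,\,\max(0,j_k-\alpha))$, both coordinates lying in $[0,t]$, hence in $\sum_{0\leq i,j\leq t}A(i)\otimes A(j)$; the displayed identity therefore yields $G_1\subseteq T_\alpha(G_1)\bigl(\sum_{0\leq i,j\leq t}A(i)\otimes A(j)\bigr)$. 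For the dimension bound, each basis element $g_k$ contributes at most $|M(i_k-\alpha)|\cdot|M(j_k-\beta_k)|\leq 2^{i_k-\alpha}\cdot 2^{j_k-\beta_k}\leq 2^{2t}$ generators, so $\dim_K T_\alpha(G_1)\leq 2^{2t}\dim_K G_1\leq 2^{2t+2}\dim_K G_1$. The construction for $G_2$ is obtained by swapping the two tensor factors (splitting each right-factor monomial as a prefix of degree $\alpha$ times a tail, and each left-factor monomial as a prefix of degree $\min(i,\alpha)$ times a tail).

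No genuine obstacle is anticipated: this is a bookkeeping lemma about splitting monomials in a free algebra. The only point requiring care is the choice $\beta_k=\min(j_k,\alpha)$, which simultaneously ensures that the head's right-factor degree lies in $[c,\alpha]$ (using $c\leq j_k$ together with the hypothesis $c\leq\alpha$) and that the tail's right-factor degree $\max(0,j_k-\alpha)$ stays in $[0,t]$.
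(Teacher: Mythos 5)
Your proposal is correct and follows essentially the same route as the paper: the paper also writes each (homogeneous) basis element of $G_{1}$ as a sum of ``heads'' in $A(\alpha)\otimes\sum_{i=c}^{\alpha}A(i)$ multiplied by monomial ``tails'' forming a basis of $\sum_{0\leq i,j\leq t}A(i)\otimes A(j)$, and takes $T_{\alpha}(G_{1})$ to be the span of the heads, with the same counting (your explicit splitting at $\beta_{k}=\min(j_{k},\alpha)$ even gives the marginally sharper factor $2^{2t}$ in place of $2^{2t+2}$).
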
 
\begin{proof} We will do a proof for the set $G_{1}$; proof for the  set $G_{2}$ is similar.  Let $c_{1}, c_{2}, \ldots , c_{\gamma }$ be a basis of  $\sum _{0\leq i,j\leq t}A(i)\otimes A(j)$.
 Notice that $\gamma \leq 2^{2t+2}$, because $\dim _{k}A(i)=2^{i}$. 
 Then we can write every  element $g\in G_{1}$ in the form  $g=\sum_{i=c}^{\gamma }g_{i}c_{i}$, where $g_{i}\in  A(\alpha )\otimes \sum _{c\leq i\leq \alpha }A(i).$
  Let $g(1), g(2), \ldots $ be a basis of the linear space $G$. We can do such a decomposition  $g(k)=\sum_{i=1}^{\gamma }g(k)_{i}c_{i}$ for every element from the basis of $G$.
 We can now put $T_{\alpha }(G_{1})$ to be a linear space spanned by all these elements $g(k)_{i}$; it is clear that set $T_{\alpha }(G_{1})$ satisfies the thesis of the theorem.
\end{proof}\\
We will now prove another  modification of Lemma $1$ from \cite{s}, and Lemma $7$ from \cite{n}.
\begin{lemma}\label{G}  Let $f \in \sum _{1\leq i,j\leq t}A(i)\otimes A(j)$  for some $t  \geq 1$. For each $n >t$ there exist homogeneous  linear spaces 
 $G_{1}(f,n)\subseteq A(n)\otimes \sum_{i\geq {n\over 2t}}^{n}A(i)$, $G_{2}(f,n)\subseteq \sum_{i\geq {n\over 2t}}^{n}A(i)\otimes A(n)$  with 
 $ \dim _{K} G_{1}(f,n), \dim _{K} G_{2}(f,n) \leq 2^{2t+6}n^{2}t^{3}$ such that, for every integer $m \geq n+1$,  $f^{m}$ belongs to the right ideal of $A\otimes A$ generated by polynomials of the form $hgf^{m-n-1}$, where $h\in G_{1}(f,n)\bigcup G_{2}(f,n)$ and $g\in \sum _{0\leq i,j\leq t} A(i)\otimes A(j)$.
\end{lemma}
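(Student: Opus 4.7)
The plan is to combine the left-peeling of Lemma \ref{left2} with the right-peeling of Lemma \ref{r3} via a single stopping time, and then to invoke Lemma \ref{independent} to trim the excess on the remaining tensor factor. A single application of Lemma \ref{left2} (or Lemma \ref{r3}) alone is insufficient: it controls only one of the tensor components (forcing it to have degree exactly $n$) while leaving the other free to range up to $2tn$, far beyond the target bound $n$ demanded by $G_1(f,n), G_2(f,n)$.

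Concretely, write $f=\sum_{1\le i,j\le t} f_{i,j}$ with $f_{i,j}\in A(i)\otimes A(j)$, and for a multi-index $\mathbf{j}=((\iota_1,\varphi_1),\dots,(\iota_k,\varphi_k))$ let $P_k(\mathbf{j})=f_{\iota_1,\varphi_1}\cdots f_{\iota_k,\varphi_k}$, with partial left degree $L_k$ and partial right degree $R_k$. Introduce the stopping time $k^*(\mathbf{j})=\min\{k:L_k\ge n\text{ or }R_k\ge n\}$, which satisfies $\lceil n/t\rceil\le k^*\le n$. Grouping the expansion of $f^m$ by the value of $k^*$ and summing over the tails yields $f^m=\sum_{k}W_k\,f^{m-k}$, where $W_k$ is the sum of $P_k(\mathbf{j}')$ over all length-$k$ prefixes $\mathbf{j}'$ with $k^*(\mathbf{j}')=k$. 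Because $L_{k-1}, R_{k-1}<n$, both partial degrees at step $k$ satisfy $L_k, R_k\le n+t-1$. I would then split $W_k=W_k^L+W_k^R$ according as $L_k\ge n$, or ($L_k<n$ and $R_k\ge n$); this gives $W_k^L\in\sum_{L=n}^{n+t-1}A(L)\otimes\sum_{R=k}^{n+t-1}A(R)$ and $W_k^R\in\sum_{L=k}^{n-1}A(L)\otimes\sum_{R=n}^{n+t-1}A(R)$.

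Applying Lemma \ref{independent} with $\alpha=n$ and $c=k$ to each (in the $G_1$-shape and the symmetric $G_2$-shape respectively) produces linear spaces $T_n(W_k^L)\subseteq A(n)\otimes\sum_{R=k}^{n}A(R)$ and $T_n(W_k^R)\subseteq\sum_{L=k}^{n}A(L)\otimes A(n)$, each of $K$-dimension at most $2^{2t+2}$, with $W_k^L\in T_n(W_k^L)\cdot H'$ and $W_k^R\in T_n(W_k^R)\cdot H'$, where $H'=\sum_{0\le i,j\le t}A(i)\otimes A(j)$. Set $G_1(f,n)=\sum_{k}T_n(W_k^L)$ and $G_2(f,n)=\sum_{k}T_n(W_k^R)$; the inequality $k\ge\lceil n/t\rceil>n/(2t)$ delivers the required shape, and summing over the at most $n$ values of $k$ gives $\dim_K G_i\le n\cdot 2^{2t+2}\le 2^{2t+6}n^2t^3$. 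To finish, since $k\le n$ and $m\ge n+1$ one has the identity $f^{m-k}=f^{m-n-1}\cdot f^{n+1-k}$, so each summand $W_k^L\,f^{m-k}$ lies in $T_n(W_k^L)\cdot H'\cdot f^{m-n-1}\cdot f^{n+1-k}$, and a typical element factors as $(hgf^{m-n-1})\cdot f^{n+1-k}$, which sits in the right ideal generated by polynomials $hgf^{m-n-1}$ with $h\in G_1(f,n)$, $g\in H'$. The symmetric argument handles $W_k^R$ using $G_2(f,n)$.

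The delicate point is securing the simultaneous upper bound $L_k, R_k\le n+t-1$ on both partial degrees of $W_k$: this is precisely what the joint stopping condition $L_{k-1}, R_{k-1}<n$ provides, and it is exactly what Lemma \ref{independent} requires in order to apply with $\alpha=n$. A one-sided peeling by $L$ alone would leave the right component potentially of degree $2tn$, outside the reach of Lemma \ref{independent} with this choice of $\alpha$.
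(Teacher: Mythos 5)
Your argument is correct and is essentially the paper's own proof: the paper likewise expands $f^{m}$ by a two-sided stopping rule on the simultaneous left and right partial degrees (phrased there as the largest $k$ with both partial sums $\leq n$, producing pieces with one degree in $[n,n+t]$ and the other between roughly ${n\over 2t}$ and $n+t$), and then applies Lemma \ref{independent} with $\alpha=n$ to extract $G_{1}(f,n)$ and $G_{2}(f,n)$ and to trim the remaining factor into $\sum_{0\leq i,j\leq t}A(i)\otimes A(j)$. The only point to add is that the lemma asks for homogeneous spaces, so apply Lemma \ref{independent} to the span of the homogeneous components of each $W_{k}^{L}$, $W_{k}^{R}$ (as the paper does with its pieces $q_{k}, q'_{k}, \ldots$); this multiplies your dimension count by at most $2nt$ and still fits comfortably under $2^{2t+6}n^{2}t^{3}$.
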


\begin{proof}  Write $f=\sum _{1\leq i,j\leq t}f_{i,j}$ where $f_{i}\in A(i)\otimes A(j)$. 
  Then $f^{m}$ is a sum of terms of the form
 \[f_{i_{1}, j_{1}}f_{i_{2}, j_{2}}\ldots    f_{i_{k}, j_{k}}f_{i_{k+1}, j_{k+1}}\ldots  f_{i_{m}, j_{m}}\] 
where k denotes the largest subscript such that both $i_{1} +i_{2} +\ldots +i_{k} \leq  n$ and $j_{1}+j_{2}+\ldots +j_{k}\leq n$. We then have the decomposition:
\[f^{m} =\sum_{k\leq n}[(q_{k}+q'_{k}+q''_{k})f^{m-k}+(\sum_{0\leq i,j <t }q_{k, i,j}+ \sum_{0\leq i<t}l_{k, i}+\sum_{0\leq i<t}\bar {l}_{k, i})f^{m-k-1}]\]
  where  
\[ q_{k}=\sum_{i_{1}+\ldots +i_{k}=n; j_{1}+\ldots +j_{k}=  n }f_{i_{1}, j_{1}}\ldots f_{i_{k}, j_{k}},\]
\[ q'_{k}=\sum_{i_{1}+\ldots +i_{k}=n; j_{1}+\ldots +j_{k}< n }f_{i_{1}, j_{1}}\ldots f_{i_{k}, j_{k}},\]
\[ q''_{k}=\sum_{i_{1}+\ldots +i_{k}<n; j_{1}+\ldots +j_{k}= n }f_{i_{1}, j_{1}}\ldots f_{i_{k}, j_{k}},\]

 \[q_{k,i, j}=\sum_{i_{1}+\ldots +i_{k}=n-i, i_{k+1}>i; j_{1}+\ldots +j_{k}=n-j, j_{k+1}>j }f_{i_{1}, j_{1}}\ldots f_{i_{k}, j_{k}}f_{i_{k+1}, j_{k+1}}\]
\[l_{k,i,}=\sum_{i_{1}+\ldots +i_{k}=n-i, i_{k+1}>i; j_{1}+\ldots +j_{k}+ j_{k+1}\leq  n }f_{i_{1}, j_{1}}\ldots f_{i_{k}, j_{k}}f_{i_{k+1}, j_{k+1}}\]
\[ \bar {l}_{k, j}=\sum_{j_{1}+\ldots +j_{k}=n-i, j_{k+1}>i; i_{1}+\ldots +i_{k}+i_{k+1}\leq n }f_{i_{1}, j_{1}}\ldots f_{i_{k}, j_{k}}f_{i_{k+1}, j_{k+1}}\]
 
 Notice that, similarly to Lemma \ref{left2}, we get that 
\[q,q',q'',q_{k,i,j}, l_{k,i}, {\bar l}_{k,i}\subseteq  (\sum_{i=0}^{t}A(n+i)\otimes \sum_{i\geq {n\over 2t}}^{2tn}A(i))+ (\sum_{i\geq {n\over 2t}}^{2tn}A(i)\otimes \sum_{i=0}^{t}A(n+i)).\] On the other hand, by construction $q,q',q'',q_{k,i,j}, l_{k,i}, {\bar l}_{k,i}\subseteq  \sum_{i, j\leq n+t}A(i)\otimes A(j)$. It follows that \[q,q',q'',q_{k,i,j}, l_{k,i}, {\bar l}_{k,i}\subseteq  (\sum_{i=0}^{t}A(n+i)\otimes \sum_{i\geq {n\over 2t}}^{n+t}A(i))+(\sum_{i\geq {n\over 2t}}^{n+t}A(i)\otimes \sum_{i=0}^{t}A(n+i)).\]

 Let $G$ be a homogeneous linear space spanned  by all  homogeneous components of $q_{k}$, $q'_{k}$, $q''_{k}$, $q_{k,i,j}$, $l_{k,i}$, ${\bar l}_{k,i}$, then $dim _{K} G\leq 6nt^{2}\cdot (n+t)t=6n(n+t)t^{3}$. 
 Let $c$ be the smallest natural number which equals  at least ${n\over 2t}$ and denote $\alpha =n$. Then there are linear spaces   \[G_{1}\subseteq ( \sum_{i=\alpha }^{\alpha +t}A_{i}\otimes \sum _{i=c}^{ \alpha +t}A_{i}), G_{2}\subseteq  (\sum _{i=c}^{\alpha +t}A
_{i}\otimes \sum_{i=\alpha }^{\alpha +t}A_{i})\] such that $G=G_{1}+G_{2}$ and $\dim _{K} G_{1}, \dim _{K} G_{2}\leq \dim _{K} G$.  We can now take  
 $G_{1}(f,n) = T_{n}(G_{1})$,  $G_{2}(f,n) = T_{n}(G_{2})$ as in the Lemma \ref{independent} applied for $\alpha =n$. 
 By Lemma \ref{independent},  $\dim _{K} G_{1}(f,n), \dim _{K} G_{2}(f,n)\leq 2^{2t+2}\dim _{K} G\leq 2^{2t+2}6n(n+t)t^{3}< 2^{2t+6}n^{2}t^{3}$, as required.
\end{proof}

\section{Appendix: Proof of Theorem \ref{S}. \\ Introducing sets $Q(S,n,m)$}\label{appendix3}
  
 Before introducing sets $Q(S,n,m)$, we will prove the following lemma.

\begin{lemma}\label{E}  Let $f \in H$ where $H= \sum_{1\leq i,j\leq t}A(i)\otimes  A(j)$  for some $t  \geq 1$. Let $n_{1}>t$, $n_{2}-3n_{1}>t$, $n_{3}>2tn_{2},$ $m>8n_{3}+6tn_{1}$ be natural numbers.
 Let $G_{1}(f,n)$, $G_{2}(f,n)$ be as in Lemma \ref{G}, and $T(f,n)$, $T'(f,n)$ be as in Lemmas \ref{introducingT} , \ref{introducingT'} and \ref{smaller}. 
 Then the two-sided  ideal of $A\otimes A$ generated by $f^{m}$ belongs to the right ideal of $A\otimes A$ generated by elements from   
\[\sum_{1\leq k, a_{2}< j}((A(k\cdot n_{3}-n_{2})\otimes A(j))T(f,n_{2}-n_{1})T(f,n_{1})(E(f,n_{1},n_{2})\cap E'(f,n_{1}, n_{2}))\] where 
 $E(f,n_{1},n_{2})= T(f,n_{1})T(f,n_{2}-n_{1})(A\otimes A),$ 
 $E'(f,n_{1}, n_{2})=S(f,n_{1},n_{2})+S'(f,n_{1}, n_{2}),$  
\[S(f,n_{1},n_{2})= G_{1}(f, n_{2}-n_{1})T(f,n_{1})(A\otimes A) \]
\[S'(f,n_{1}, n_{2})=G_{2}(f, n_{2}-n_{1})\bigcap _{i=0}^{2n_{1}}(T'(f,i)T'(f, n_{1})(A\otimes A)).\]
 Moreover, linear spaces $E(f, n_{1},n_{2})$, $E'(f, n_{1},n_{2})$, $S(f, n_{1},n_{2})$, $S'(f, n_{1},n_{2})$ are homogeneous.
\end{lemma}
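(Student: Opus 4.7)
The two-sided ideal of $A\otimes A$ generated by $f^{m}$ is spanned by elements of the form $(a\otimes b)f^{m} c$ with $a,b\in A$ and $c\in A\otimes A$. Since the target right ideal is closed under right multiplication, it suffices to show that each element $(a\otimes b) f^{m}$ lies in the prescribed right ideal. The plan is to combine Lemmas \ref{introducingT}, \ref{introducingT'}, \ref{smaller} and \ref{G} in the right order to match the prescribed structure $(A(kn_{3}-n_{2})\otimes A(j))\,T(f,n_{2}-n_{1})T(f,n_{1})(E\cap E')$.

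First, I would handle the ``outer'' factor. For each homogeneous tensor $a\otimes b$, split $a=a'a''$ so that $\deg a'$ has the form $kn_{3}-n_{2}$ for the largest admissible $k\ge 1$, and $b=b'b''$ with $\deg b'=j$ lying in the range indexed by the outer sum; the residual tensor $a''\otimes b''$ then lies in the ``small'' region $H'=\sum_{0\le i,j\le t}A(i)\otimes A(j)$ used throughout Section~\ref{appendix1}. The hypotheses $n_{3}>2tn_{2}$ and $m>8n_{3}+6tn_{1}$ are designed to make this extraction possible while leaving enough powers of $f$ on the tail to absorb the next two applications of Lemma~\ref{introducingT}. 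Applying that lemma to the tail $(a''\otimes b'')f^{m}\in H'f^{m}$ first with $n=n_{2}-n_{1}$ and then with $n=n_{1}$ peels off the factor $T(f,n_{2}-n_{1})T(f,n_{1})$, leaving the residual tail in $H'f^{m-n_{2}}(A\otimes A)$.

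It then remains to show $H'f^{m-n_{2}}\subseteq E\cap E'$. For $E$, I would apply Lemma~\ref{introducingT} twice more, now with $n=n_{1}$ and then $n=n_{2}-n_{1}$, to get $H'f^{m-n_{2}}\subseteq T(f,n_{1})T(f,n_{2}-n_{1})(A\otimes A)=E(f,n_{1},n_{2})$. For $E'$, I would invoke Lemma~\ref{G} with $n=n_{2}-n_{1}$ on $f^{m-n_{2}}$, splitting the result into the $G_{1}$- and $G_{2}$-parts. The $G_{1}$-part, after one more application of Lemma~\ref{introducingT} with $n=n_{1}$ to the $H'f^{\cdot}$ tail, lands inside $G_{1}(f,n_{2}-n_{1})T(f,n_{1})(A\otimes A)=S(f,n_{1},n_{2})$. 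For the $G_{2}$-part, I would apply Lemma~\ref{smaller} when $i\le 2t$ and Lemma~\ref{introducingT'} when $i>2t$, followed by a further application of Lemma~\ref{introducingT'} with $n_{1}$, to place the $H'f^{\cdot}$ tail inside each $T'(f,i)T'(f,n_{1})(A\otimes A)$ uniformly in $i\in\{0,\ldots ,2n_{1}\}$. Intersecting these memberships in the linear-space sense places the $G_{2}$-part inside $S'(f,n_{1},n_{2})$, so $H'f^{m-n_{2}}\subseteq S+S'=E'(f,n_{1},n_{2})$. Homogeneity of $E,E',S,S'$ is immediate from the homogeneity of $T(f,n)$, $T'(f,n)$, $G_{1}(f,n)$, $G_{2}(f,n)$ in the earlier lemmas together with the grading on $A$.

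The technical heart is the step showing $H'f^{m-n_{2}}\subseteq E\cap E'$: one must produce a \emph{single} tail which simultaneously witnesses membership in two a priori different linear spaces. This works because both memberships are properties of the same concrete subspace $H'f^{m-n_{2}}\subseteq A\otimes A$, so once each containment is verified independently the intersection is automatic; the most delicate bookkeeping is in the $G_{2}$-branch, where uniformity in $i\in\{0,\ldots,2n_{1}\}$ forces one to iterate Lemma~\ref{smaller} in the low range and Lemma~\ref{introducingT'} in the high range while keeping the exponent of $f$ above the thresholds required by those lemmas. The numerical conditions on $m$ and $n_{3}$ are tailored precisely so that every invocation remains in range.
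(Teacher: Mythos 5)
Your reduction of the two-sided ideal to elements of the prescribed shape is where the argument breaks. You propose to split the prefix of a spanning element $(a\otimes b)f^{m}(A\otimes A)$ as $a=a'a''$, $b=b'b''$ with $\deg a'=kn_{3}-n_{2}$ and the residue $a''\otimes b''$ landing in $H'=\sum_{0\leq i,j\leq t}A(i)\otimes A(j)$. This is false in general: taking the largest admissible $k$ only guarantees $\deg a''<n_{3}$, and since $n_{3}>2tn_{2}\gg t$ the residue need not lie in $H'$; worse, for the generator $f^{m}$ itself, whose prefix has degree $0<n_{3}-n_{2}$, no such splitting exists at all. The later applications of Lemma \ref{introducingT} cannot repair this, because they peel off factors of the \emph{fixed} left degrees $n_{2}-n_{1}$ and $n_{1}$, so they can never move an arbitrary prefix degree into the exact residue class $kn_{3}-n_{2}$ demanded by the target right ideal.

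The exact degree is obtained in the paper by a mechanism you never invoke, namely Lemma \ref{left2} (the variable-length extraction). For each spanning element one chooses $v$ with $n_{3}\leq v<2n_{3}$ so that $(\text{prefix degree})+v$ equals $kn_{3}-n_{2}$ for some $k\geq 1$, and applies Lemma \ref{left2} with $n=v$: then $f^{m}$ lies in the right ideal generated by elements $hgf^{m-v}$ with $h\in F(f,v)\subseteq A(v)\otimes\sum_{i\geq v/2t}A(i)$, so the peeled factor has left degree exactly $v$ (which produces the outer factor $A(kn_{3}-n_{2})$) and right degree larger than $n_{2}$ (which produces the constraint on $j$), while the leftover $gf^{m-v}$ sits in $Hf^{m-v}$. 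Only after this step do the two applications of Lemma \ref{introducingT} (with $n=n_{2}-n_{1}$, then $n=n_{1}$) and the verification $Hf^{m-n_{2}-2n_{3}}\subseteq E(f,n_{1},n_{2})\cap E'(f,n_{1},n_{2})$ proceed; that latter portion of your proposal (Lemma \ref{introducingT} twice for $E$, and Lemma \ref{G} at $n=n_{2}-n_{1}$ combined with Lemmas \ref{introducingT}, \ref{introducingT'} and \ref{smaller} for $E'$, plus the homogeneity remark) does agree with the paper. As written, however, your opening reduction is a genuine gap, and it is precisely the step that the choice $n_{3}\leq v<2n_{3}$, the hypothesis $m>8n_{3}+6tn_{1}$, and Lemma \ref{left2} exist to close.
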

 \begin{proof} The ideal generated by $f^{m}$ is spanned by elements $(A(k\cdot n_{3}-n_{2}-v)\otimes A(j))f^{m}$, for some $n_{3}\leq v<2n_{3},$ and some $0\leq k,j$.  
 By Lemma \ref{left2}, $f^{m}$ belongs to the right ideal of $A\otimes A$ generated by elements of the form $hgf^{m-v}$, where $h\in F(f,v)$ and $g\in \sum_{0\leq i\leq t}A(i)\otimes K$. Recall that ${v\over t}\geq {n_{3}\over t}>a_{2}.$
 Therefore, the ideal generated by $f^{m}$ belongs to the right ideal generated by elements from $\sum_{1\leq k,n_{2}\leq i}((A(k\cdot n_{3}-n_{2})\otimes A(i))Hf^{m-v}(A\otimes A)$ where $H=\sum_{0\leq i,j\leq t}A(i)\otimes A(j)$.
  By Lemma \ref{introducingT}, we get 
$Hf^{m-v}\subseteq T(f,n_{2}-n_{1})Hf^{m-v-n_{2}+n_{1}}(A\otimes A)$. Applying Lemma \ref{introducingT} again, we get 
$Hf^{m-v-n_{2}+n_{1}}\subseteq T(f,n_{1})Hf^{m-v-n_{2}}(A\otimes A)$.
 Therefore, the two sided ideal of $A\otimes A$ generated by $f^{m}$ is contained in 
$\sum_{1\leq , a_{2}<i}(A(k\cdot n_{3}-n_{2})\otimes A(i))T(f,n_{2}-n_{1})T(f,n_{1})Hf^{m-n_{2}-v}(A\otimes A)$.
  Recall that $v\leq 2n_{3}$, so it suffices to prove that $Hf^{m-n_{2}-2n_{3}}\subseteq E(f,n_{1}, n_{2})\cap E'(f,n_{1}, n_{2})$. 
 By Lemma \ref{introducingT}, applied for $n=n_{1}$ and then for $n=n_{2}-n_{1}$  we get 
$Hf^{m-n_{2}-2n_{3}}\subseteq T(f, n_{1})T(f,n_{2}-n_{1})(A\otimes A),$
so $Hf^{m-n_{2}-2n_{3}}\subseteq E(f,n_{1},n_{2})$. 

 On the other hand, by Lemma \ref{G}, applied to $n=n_{2}-n_{1}$ we get that   $Hf^{m-n_{2}-2n_{3}}\subseteq (G_{1}(f,n_{2}-n_{1})+G_{2}(f,n_{2}-n_{1}))Hf^{m-2n_{2}-2n_{3}+n_{1}-1}(A\otimes A).$ 
 
 Clearly, $m-2n_{2}-2n_{3}+n_{1}-1\geq m-4n_{3}$. 
By Lemma \ref{introducingT}, applied to $n=n_{2}$, we get 
 $G_{1}(f,n_{2}-n_{1})Hf^{m-4n_{3}} \subseteq G_{1}(f,n_{2}-n_{1})T(f,n_{1})(A\otimes A)=S(f,n_{1},n_{2})(A\otimes A)\subseteq E(f,n_{1}, n_{2})$. 
 
 If $i>2t$ then we apply Lemma \ref{introducingT'} for $n=i$; if $i\leq 2t$ then we apply Lemma \ref{smaller}
 and we get 
$Hf^{m-4n_{3}}\subseteq T'(f,i)H'f^{m-4n_{3}-i}(A\otimes A)$ where $H'=\sum_{0\leq l,l'\leq t}A(l)\otimes A(l').$
 We then apply \ref{introducingT'}
 for $n=n_{1}$ to get 
$Hf^{m-4n_{3}}\subseteq T'(f,i)T'(f, n_{1})(A\otimes A)$ for every $n_{1}\leq i\leq 2n_{1}$.
 Therefore $G_{2}(f,n_{2}-n_{1})Hf^{m-4n_{3}}\subseteq  G_{2}(f, n_{2}-n_{1})\bigcap_{i=n_{1}}^{2n_{1}}( T'(f,i)T'(f, n_{1})(A\otimes A))\subseteq E'(f,n_{1}, n_{2}).$

 Moreover, sets $E(f,n_{1},n_{2})$, $E('f,n_{1},n_{2})$, $S(f,n_{1},n_{2}$, $S'(f,n_{1},n_{2}$ are homogeneous, because by Lemmas \ref{introducingT}, \ref{introducingT'} sets $T(f,n_{1})$ and $T(f, n_{2})$ are homogeneous.
\end{proof}

Let $1\leq \alpha $, $0< n_{1}<n_{2}$ be natural numbers and let $F\subseteq A(n_{1})\otimes A(\alpha )$. Recall that 
 \[Q(F, n_{1}, n_{2})=\sum_{k=0}^{\infty }\sum_{j\in W(n_{1}, n_{2})}((A(k\cdot n_{2}-n_{1})+A(k\cdot n_{2}))\otimes A(j))F(A\otimes A),\] where 
$j\in W(n_{1}, n_{2})$ if and only if  the interval $[j,j+\alpha ]$ is disjoint with all intervals $[kn_{2}-n_{1}, kn_{2}+n_{1}]$ for $k=0, 1,2, \ldots $.

 The following lemma will be used in the proof of Lemma \ref{final}.
\begin{lemma} \label{1}
 Given a natural number $n$, and a linear space $D\subseteq A\otimes \sum_{i=1}^{\infty } A(n+i)$ let $Z_{n}(D)$ to be the smallest linear space that $D\subseteq Z_{n}(D)(K\otimes A(n))$. 
  Similarly, let ${\bar Z}_{n}(D)$ to be the smallest linear space that $D\subseteq (K\otimes A(n)){\bar Z}_{n}(D)$.
Then   $\dim _{K}Z_{n}(D), \dim _{K}{\bar Z}_{n}(D)\leq 2^{n}\dim_{K}D$.
\end{lemma}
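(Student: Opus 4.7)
The plan is to use the free algebra structure of $A$ to decompose elements according to their rightmost (resp. leftmost) $n$ letters, and then count dimensions by a finite projection argument.

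First I would fix a monomial basis $\{w_1,\ldots,w_{2^n}\}$ of $A(n)$; there are exactly $2^n$ such monomials since $A$ is free on two generators. For each $j\geq 1$, because factorization of monomials in a free algebra is unique, every monomial of degree $n+j$ can be written in exactly one way as $v\cdot w_l$ with $v\in M(j)$ and $l\in\{1,\ldots,2^n\}$. Consequently the linear space $A(n+j)$ decomposes as a direct sum $\bigoplus_{l=1}^{2^n} A(j)\,w_l$. Extending over $j$ and tensoring with $A$ on the left, every element $d\in A\otimes\sum_{j\geq 1}A(n+j)$ admits a unique representation
\[ d=\sum_{l=1}^{2^{n}} c_{l}(d)\,(1\otimes w_{l}),\qquad c_{l}(d)\in A\otimes A. \]

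Next I would observe that each assignment $\pi_l:D\to A\otimes A$, $d\mapsto c_l(d)$, is $K$-linear (by uniqueness of the decomposition). Therefore $\dim_K \pi_l(D)\leq \dim_K D$ for every $l$. By the very definition of $Z_n(D)$ as the smallest linear space with $D\subseteq Z_n(D)(K\otimes A(n))$, and because the $(1\otimes w_l)$ span $K\otimes A(n)$, the space $Z_n(D)$ is contained in $\sum_{l=1}^{2^n}\pi_l(D)$. Summing the $2^n$ dimension bounds gives $\dim_K Z_n(D)\leq 2^n\dim_K D$.

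The argument for $\bar Z_n(D)$ is the mirror image: use the unique factorization of each monomial of degree $n+j$ as $w_l\cdot v$ with $v\in M(j)$, obtain a decomposition $d=\sum_l(1\otimes w_l)\bar c_l(d)$, and conclude by the same projection/dimension counting. I do not foresee a serious obstacle; the entire content is the unique factorization of monomials in the free algebra together with the observation that $\dim_K A(n)=2^n$. The only point one has to be careful about is that the decomposition must be a direct sum (so that the $\pi_l$ are well-defined linear maps), and that is precisely what unique factorization provides.
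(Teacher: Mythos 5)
Your proof is correct and follows essentially the same route as the paper: the paper proves this lemma by the same coefficient-decomposition argument it uses for Lemma \ref{independent}, namely expanding each element of $D$ with respect to the $2^{n}$ monomials spanning $A(n)$ (as right, respectively left, factors of the second tensor component) and taking the span of the resulting coefficients, which is exactly your projection argument. Your explicit appeal to unique factorization of monomials in the free algebra to justify that the coefficient maps are well defined is a welcome, if routine, clarification of what the paper leaves implicit.
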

\begin{proof} Similar to Lemma \ref{independent}.
 \end{proof}

\begin{lemma}\label{final} Let $f \in H$ where $H= \sum_{1\leq i,j\leq t}A(i)\otimes  A(j)$  for some $t  \geq 1$. Let $0<n_{1}<n_{2}< n_{3}$ be natural numbers such that $n_{1}$ divides $n_{2}$, $n_{2}$ divides $n_{3}$ and $10tn_{1}< n_{2}$, $10tn_{2}< n_{3}$, $10tn_{3}<m$, $10t<n_{1}$. Let $U(n_{1}, n_{3})=\bigcup_{k=1}^{\infty }[kn_{3}-n_{1}, kn_{3}+n_{1}]$. Let $G_{1}(f,n)$, $G_{2}(f,n)$ be as in Lemma \ref{G}, and $T(f,n)$, $T'(f,n)$ be as in Lemmas \ref{introducingT}, \ref{introducingT'} and \ref{smaller}, and  $E'(f,n_{1}, n_{2})$ be as in Lemma \ref{E}.
 Then there is set $F(f,n_{1}, n_{2}, n_{3})\subseteq A(n_{2})\otimes \sum_{i=1}^{2tn_{2}+2t}A(i)$ with $\dim _{K} F(f,n_{1}, n_{2}, n_{3})\leq 2^{40tn_{1}}n_{2} $ and such that 
 the two-sided  ideal of $A\otimes A$ generated by $f^{m}$ belongs to the right ideal of $A\otimes A$ generated by elements from  $Q+P(A(n_{3})\otimes A(n_{3}))$ where $Q=Q(T(f,n_{1}), n_{1}, n_{2})+Q(F(f,n_{1}, n_{2}, n_{3}), n_{2}, n_{3}),$ and  where
\[P=\sum_{k=1}^{\infty }\sum_{j\in U(n_{1}, n_{3})}(A(k\cdot n_{3})\otimes A(j))E'(f,n_{1}, n_{2})(A\otimes A).\]
 Moreover, if $n_{2}>100tn_{1}$ then $F(f,n_{1}, n_{2}, n_{3})\subseteq A(n_{2})\otimes \sum_{i>{n_{2}\over 10t}}^{2tn_{2}+2t}A(i).$
\end{lemma}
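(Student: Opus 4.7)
The plan is to refine the conclusion of Lemma~\ref{E} by splitting each generator of the resulting right ideal according to a two-scale interval geometry driven by the chain $n_1\mid n_2\mid n_3$, placing each piece either into $Q(T(f,n_1),n_1,n_2)$, into $Q(F,n_2,n_3)$, or into the right ideal of $P(A(n_3)\otimes A(n_3))$.

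First I would apply Lemma~\ref{E} to realize the two-sided ideal generated by $f^m$ as contained in the right ideal of $A\otimes A$ generated by elements
\[
X_{k,j} \;=\; (A(kn_3-n_2)\otimes A(j))\,T(f,n_2-n_1)\,T(f,n_1)\,(E(f,n_1,n_2)\cap E'(f,n_1,n_2))
\]
for $k\geq 1$ and $j>n_2$. I would take $F(f,n_1,n_2,n_3):=T(f,n_2-n_1)T(f,n_1)$. By Lemmas~\ref{introducingT} and \ref{introducingT'} the product lies in $A(n_2)\otimes\sum_{i\le 2tn_2+2t}A(i)$, and the lower end of its right-tensor range is $(n_2-2t)/(2t)=n_2/(2t)-1$, which under $n_2>10tn_1>2.5t$ exceeds $n_2/(10t)$ and gives the ``moreover'' inclusion. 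For the dimension estimate, I would note that the proof of Lemma~\ref{introducingT} actually yields the linear-in-$r$ bound $\dim T(f,r)\leq 2^{4t+3}tr$ (the cubic bound in the statement is loose), so $\dim F\leq 2^{8t+6}t^2 n_1(n_2-n_1)$; the hypothesis $n_1>10t$ then absorbs the $t,n_1$-dependent factors into $2^{40tn_1}$, leaving a single factor of $n_2$.

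Next, splitting $F=\sum_\alpha F_\alpha$ with $F_\alpha\subseteq A(n_2)\otimes A(\alpha)$, I would do a two-step case analysis on each $F_\alpha$-piece of $X_{k,j}$. In the \emph{clean} case $[j,j+\alpha]$ is disjoint from every $[k'n_3-n_2,k'n_3+n_2]$, so $j\in W(n_2,n_3)$ and the piece lies in $(A(kn_3-n_2)\otimes A(j))F_\alpha(A\otimes A)\subseteq Q(F,n_2,n_3)$. Otherwise, I would pull the factor $T(f,n_2-n_1)$ through the prefix to reach the shape $(A(kn_3-n_1)\otimes A(j+\beta))\,T(f,n_1)\,(E\cap E')$. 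Since $n_2\mid n_3$, the left degree $kn_3-n_1$ equals $(kn_3/n_2)n_2-n_1$, which matches the prefix format of $Q(T(f,n_1),n_1,n_2)$. If the shifted right-tensor position $j+\beta$ lies in $W(n_1,n_2)$ relative to $T(f,n_1)$'s right-tensor range, this piece falls into $Q(T(f,n_1),n_1,n_2)$.

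The remaining \emph{doubly bad} case, when $j\notin W(n_2,n_3)$ and also $j+\beta\notin W(n_1,n_2)$, is the main obstacle. Here I would use the geometric observation that, because $n_2\mid n_3$, a bad interval at scale $n_2$ nested inside a bad interval at scale $n_3$ forces $j+\beta$ to lie within $n_1$ of a multiple of $n_3$, so a position $j^\ast\in U(n_1,n_3)$ becomes reachable. Using the inclusion $(E\cap E')\subseteq E'=S+S'$ from Lemma~\ref{E}, I would rearrange $X_{k,j}$ into the form $(A(kn_3)\otimes A(j^\ast))\,E'\,(A\otimes A)\,(A(n_3)\otimes A(n_3))\,(A\otimes A)$, placing the piece into the right ideal of $P(A(n_3)\otimes A(n_3))$; the condition $m>10tn_3$ guarantees that enough of the $f^m$-tail survives to furnish the final $(A(n_3)\otimes A(n_3))$ factor of both left and right degree $\geq n_3$. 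The hard part will be executing this repackaging cleanly: one must track precisely how the left and right tensor degrees of $T(f,n_2-n_1)$, $T(f,n_1)$ and the $E\cap E'$-factor redistribute to yield an $E'$-element with the required prefix, and verify from the interval geometry (and from $n_1\mid n_2\mid n_3$) that $j^\ast$ really lands in $U(n_1,n_3)$.
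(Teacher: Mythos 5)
Your overall architecture (start from Lemma~\ref{E}, then sort each generator by the position of its right tensor degree relative to the forbidden intervals, sending pieces to $Q(F,n_2,n_3)$, $Q(T(f,n_1),n_1,n_2)$, or $P(A(n_3)\otimes A(n_3))$) is the same as the paper's, but there is a genuine gap in the ``doubly bad'' case, and your choice of $F$ is too small to repair it. The geometric claim you rely on is false: a position such as $j+\alpha_1+\alpha_2=kn_3\pm n_2$ lies exactly on a multiple of $n_2$ (since $n_2\mid n_3$), hence is as bad as possible at scale $(n_1,n_2)$, and it also violates the window $W(n_2,n_3)$; yet its distance to the nearest multiple of $n_3$ is about $n_2>10tn_1$, so it is \emph{not} in $U(n_1,n_3)$ and cannot be absorbed into $P$. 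These borderline positions (within $O(tn_1)$ of $kn_3\pm n_2$) are precisely the cases 2b and 3b of the paper's proof, and they are handled there not by $P$ but by \emph{trimming}: one replaces $f_1f_2$ (or $f_3f_4$, where $g_3\in E(f,n_1,n_2)$ is refactored as $f_3f_4(\cdots)$ with $f_3\in T(f,n_1)$, $f_4\in T(f,n_2-n_1)$) by a shorter element of $Z_{5tn_1}(Kf_1f_2)$ or $\bar Z_{5tn_1}(Kf_3f_4)$ from Lemma~\ref{1}, so that the shifted interval fits strictly inside $W(n_2,n_3)$. This is exactly why the paper defines
\[
F(f,n_1,n_2,n_3)=D+Z_{5tn_1}(D)+\bar Z_{5tn_1}(D),\qquad
D=T(f,n_2-n_1)T(f,n_1)+T(f,n_1)T(f,n_2-n_1)+T(f,n_2),
\]
rather than your $F=T(f,n_2-n_1)T(f,n_1)$: the trimmed spaces are needed for the borderline cases, and the reversed product $T(f,n_1)T(f,n_2-n_1)$ is needed because, depending on which end of the window $[kn_3+n_2+1,(k+1)n_3-n_2-1]$ the position falls near, one must anchor either at $j$ with $f_1f_2$ or at $j+\alpha_1+\alpha_2$ with $f_3f_4$ (the paper's cases 1a versus 1b). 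With your $F$, the clean-case inclusion into $Q(F,n_2,n_3)$ also silently assumes the whole interval $[j,j+\alpha_1+\alpha_2]$, not just $j$, clears the forbidden zones, which again fails exactly at these boundary positions.

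A secondary point: your dimension argument asserts that the proof of Lemma~\ref{introducingT} gives $\dim_K T(f,r)\leq 2^{4t+3}tr$. That ignores the cost of homogenization ($T(f,r)$ is the homogeneous span, and the generators $v_{j,k}(M(j)\otimes A(i))$ are not right-homogeneous, which is where the extra factor of order $tr$ in the paper's cubic bound comes from), so the linear bound is not justified as stated; this is minor, but you cannot simply wave the stated bound away. To complete the proof you should adopt the larger $F$ above, verify $\dim_K F\leq(1+2^{5tn_1+1})\dim_K D$ via Lemma~\ref{1}, and replace the ``doubly bad $\Rightarrow P$'' step by the explicit five-case interval analysis with trimming.
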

\begin{proof}   Let  $D= T(f,n_{2}-n_{1})T(f,n_{1})+T(f,n_{1})T(f,n_{2}-n_{1})+T(f,n_{2}) $, 
\[F(f,n_{1}, n_{2}, n_{3})=D+Z_{5tn_{1}}(D)+{\bar Z}_{5tn_{1}}(D),\] where $Z_{n}(D)$, ${\bar Z}_{n}(D)$  are as in Lemma \ref{1}. 
By Lemma \ref{1}, we get $\dim _{K} F(f,n_{1}, n_{2}, n_{3})\leq (1+2^{5tn_{1}+1}) \dim _{K} D \leq  2^{40tn_{1}}n_{2}$.
 By Lemma \ref{introducingT}, $T(f,n)\subseteq A(n)\otimes \sum_{i>{n-t\over 2t}}^{2nt+t}A(i)$, so 
   $F(f,n_{1}, n_{2}, n_{3})\subseteq A(n_{2})\otimes \sum_{i>{{n_{2}-2t}\over 2t}-5tn_{1}}^{2tn_{2}+2t}A(i).$
Notice that  if $n_{2}>100tn_{1}$ then $F(f,n_{1}, n_{2}, n_{3})\subseteq A(n_{2})\otimes \sum_{i>{n_{2}\over 10t}}^{2tn_{2}+2t}A(i).$

  Let $f_{1}\in T(f,n_{2}-n_{1})$, $f_{2}\in T(f,n_{1})$, and let  $j<n_{3}$.
 
  By Lemma $\ref{E}$ the ideal generated by $f^{m}$ is contained in the right ideal of $A\otimes A,$ $\sum_{k=1}^{\infty }\sum_{ j=0}^{\infty }((A(k\cdot n_{3}-n_{2})\otimes A(j))T(f,n_{2}-n_{1})T(f,n_{1})(E(f,n_{1},n_{2})\cap E'(f,n_{1}, n_{2}, n_{3}))$.
 Let $f_{1}\in T(f, n_{2}-n_{1})$, $f_{2}\in T(f,n_{1})$, $g_{3}\in E(f,n_{1},n_{2})\cap E'(f,n_{1}, n_{2}, n_{3})$.
 By Lemma \ref{introducingT}, there are ${n_{1}-t\over 2t}\leq \alpha _{1}\leq 2tn_{1}+t$, ${n_{2}-n_{1}-t\over 2t}\leq \alpha _{2}\leq 2t(n_{2}-n_{1})+t$, such that 
 $f_{1}\in A(n_{2}-n_{1})\otimes A(\alpha _{1})$, $f_{2}\in A(n_{1})\otimes A(\alpha _{2})$.
  
Assume first that  $j+\alpha _{1}+\alpha _{2}\in [kn_{3}-n_{1}, kn_{3}+n_{1}]$, for some $k$. Notice that the case $k=0$ is not possible as $j>a_{2}$ by Lemma \ref{E}. It follows that 
    $(A(i\cdot n_{3}-n_{2})\otimes A(j))f_{1}f_{2}g_{3}\subseteq \sum_{j'\in U(n_{1}, n_{3}) }(A(n_{3})\otimes A(j'))g_{3}(A\otimes A)\subseteq P,$ because $g_{3}\in E'(f,n_{1}, n_{2})$.

 If $j+\alpha _{1}+\alpha _{2}\notin [kn_{3}-n_{1}, kn_{3}+n_{1}]$, for any $k$,  it suffices to prove that 
   $(A(n_{3}-n_{2})\otimes A(j)f_{1}f_{2}T(f,n_{2}-n_{1})T(f,a_{1})(A(n_{3})\otimes A(n_{3}))\subseteq Q$ ( notice that $A(n_{3})$ at the end  comes from the fact that $f^{m}$ has large degree, and all considered sets are homogeneous).

   Let $f_{3}\in T(f,n_{2}-n_{1})$, $f_{4}\in T(f, n_{1})$, then by Lemma \ref{introducingT}, $f_{3}\in A(n_{1})\otimes A(\alpha _{3})$, $f_{4}\in A(n_{2}-n_{1})\otimes A(\alpha _{4})$, 
 for some  ${n_{1}-t\over 2t}\leq \alpha _{3}\leq 2tn_{1}+t,$ ${{n_{2}-n_{1}-t}\over 2t}\leq \alpha _{4}\leq 2t(n_{2}-n_{1})+t.$ 
 It suffices now to show that $(A(n_{3}-n_{2})\otimes A(j))f_{1}f_{2}f_{3}f_{4}(A(3tn_{3})\otimes A(3tn_{3}))\subseteq Q$.

It suffices now to consider the  the following cases:

{\bf Case 1.} $j+\alpha_{1}+\alpha _{2}\in [kn_{3}+n_{2}+1, (k+1)n_{3}-n_{2}-1]$ for some $k\geq 0$. Then we have the following two subcases.

{\bf Case 1.a} $j+\alpha_{1}+\alpha _{2}\in [kn_{3}+n_{2}+1, (k+1)n_{3}-4tn_{2}]$ for some $k\geq 0$. Denote $i=j+\alpha _{1}+\alpha _{2},$ then $i+\alpha _{3}, \alpha _{4}\leq (k+1)n_{3}-n_{2}-1,$ since $\alpha _{1}+\alpha _{2}\leq 2tn_{2}=2t.$ 
 Therefore,  $(A(n_{3}-n_{2})\otimes A(j))f_{1}f_{2}f_{3}f_{4}\subseteq \sum_{i:i, i+\alpha _{3}+\alpha _{4}\in [kn_{3}+n_{2}+1, (k+1)n_{3}-n_{2}-1]}(A(n_{3})\otimes A(i))f_{3}f_{4}(A\otimes A)\subseteq Q(D,n_{2},n_{3})\subseteq Q$, since $D\subseteq F(f, n_{1}, n_{2}, n_{3})$.

{\bf Case 1.b} $j+\alpha_{1}+\alpha _{2}\in [kn_{3}+4tn_{2}, (k+1)n_{3}-n_{2}-1]$ for some $k\geq 0,$ then $j\geq kn_{3}+n_{2}+1,$ since $\alpha _{1}+\alpha _{2}\leq 2n_{2}+2t.$ 
 Hence, $((A(n_{3}-n_{2})\otimes A(j))f_{1}f_{2}f_{3}f_{4}\subseteq \sum_{j:j,j+\alpha _{1}+\alpha _{2}\in [kn_{3}+n_{2}+1, (k+1)n_{3}-n_{2}-1] }(A(n_{3})\otimes A(j))f_{1}f_{2}(A\otimes A)\subseteq Q(D,n_{2},n_{3})\subseteq Q$.

{\bf Case 2.}  $j+\alpha _{1}+\alpha _{2}\in [kn_{3}+n_{1}+1, kn_{3}+n_{2}]$, for some $k$. We have the following two subcases. 

{\bf Case 2a.} $j+\alpha _{1}+\alpha _{2}\in [kn_{3}+n_{1}+1, kn_{3}+n_{2}-4tn_{1}]$, for some $k$. In this case 
 $(A(n_{3}-n_{2})\otimes A(j))f_{1}f_{2}f_{3}f_{4}\subseteq \sum_{i\in [kn_{3}+n_{1}+1, kn_{3}+n_{2}-4tn_{1}] }(A(n_{3})\otimes A(i))f_{3}(A\otimes A)\subseteq Q(T(f,n_{1}),n_{1},n_{2})\subseteq Q$, because $\alpha _{3}\leq 2tn_{1}+t\leq 4tn_{1}-1,$
 and $f_{3}\in T(f, n_{1}).$

 {\bf Case 2b.} $j+\alpha _{1}+\alpha _{2}\in [kn_{3}+n_{2}-4tn_{1}, kn_{3}+n_{2}]$, for some $k$. 
 Denote $i=j+\alpha _{1}+\alpha _{2},$  then $i+\alpha _{3}+\alpha _{4}\leq (k+1)n_{3}-n_{2}-1.$

 By Lemma \ref{independent2}, we have  
$(A(n_{3}-n_{2})\otimes A(j))f_{1}f_{2}f_{3}f_{4}\subseteq C$ where $C=
\sum_{i:i,i+\alpha _{3}+\alpha _{4}\in [kn_{3}+n_{2}-4tn_{1}, (k+1)n_{3}-n_{2}-1],0\leq k'}(A(k'\cdot n_{3})\otimes A(i))f_{3}f_{4}(A\otimes A).$

Denote  $q=\alpha _{3}+\alpha _{4}-5tn_{1},$ then ${\bar  Z}_{5tn_{1}}(K\xi(f_{3}f_{4}))\subseteq A(n_{2})\otimes A(q).$
 By Lemma \ref{1}, applied for $n=5tn_{1}$ and for the linear space $Kf_{3}f_{4}$, we get  
 \[C\subseteq \sum_{i: i, i+q\in [kn_{3}+n_{2}+tn_{1}-1,( k+1)n_{3}-n_{2}-1],0\leq k'}(A(k'\cdot n_{3})\otimes A(i))){\bar Z}_{5tn_{1}}(Kf_{3}f_{4})(A\otimes A),\] so  
$C\subseteq Q(F(f,n_{1}, n_{2}, n_{3}),n_{2},n_{3})\subseteq Q$, because $f_{3}f_{4}\in D$,  so ${\bar Z}_{5tn_{1}}(Kf_{3}f_{4})\subseteq F(f, n_{1}, n_{2}, n_{3}).$

{\bf Case 3.}  $j+\alpha _{1}+\alpha _{2}\in [kn_{3}-n_{2}, kn_{3}-n_{1}-1]$, for some $k$. We have the following two subcases.

{\bf Case 3a.} $j+\alpha _{1}+\alpha _{2}\in [kn_{3}-n_{2}+4tn_{1}, kn_{3}-n_{1}-1]$, for some $k$. 

Denote, $i=j+\alpha _{1}$, then $i+\alpha _{2}=j+\alpha _{1}+\alpha _{2}\leq kn_{3}-n_{1}-1$ and $i>(j+\alpha _{1}+\alpha _{2})-\alpha _{2}\geq (kn_{3}-n_{2}+4tn_{1})-(2tn_{1}+2t)>kn_{3}-n_{2}+n_{1}+1=1.$
 Therefore the  interval $[i, i+\alpha _{2}$ is disjoint with any interval $[k'n_{2}-n_{1}, k'n_{2}+n_{1}]$ for all $k'.$ 
Hence,  $(A(n_{3}-n_{2})\otimes A(j))f_{1}f_{2}f_{3}f_{4}\subseteq \sum _{k=0}^{\infty }\sum_{i: i+\alpha _{2}\in [kn_{3}-n_{2}+n_{1}+1,  kn_{3}-n_{1}-1]}(A(n_{3}-n_{1})\otimes A(i))f_{2}(A\otimes A),$ hence 
$(A(n_{3}-n_{2})\otimes A(j))f_{1}f_{2}f_{3}f_{4}\subseteq Q(T(f,n_{1}),n_{1},n_{2})\subseteq Q$.

 {\bf Case 3b.} $j+\alpha _{1}+\alpha _{2}\in [kn_{3}-n_{2}, kn_{3}-n_{2}+4tn_{1}]$, for some $k$.
 Then, $j\geq kn_{3}-n_{2}-2tn_{2}-2t,$ hence $j,j+\alpha _{1}+\alpha _{2}\in [(k-1)n_{3}+n_{2}+1, kn_{3}-n_{2}+4tn_{1}].$
 By Lemma \ref{independent2}, applied for the linear space $Kf_{1}f_{2}$, $p=5tn_{1}$  we have 
$(A(n_{3}-n_{2})\otimes A(j))f_{1}f_{2}f_{3}f_{4}\subseteq P_{1}$ where 
$P_{1}=\sum_{j:j, j+\alpha_{1}+\alpha _{2}\in [ (k-1)n_{3}+n_{2}+1,kn_{3}-n_{2}+4tn_{1}] }(A(n_{3}-n_{2})\otimes A(i))f_{1}f_{2}(A(3tn_{3})\otimes A(3tn_{3})).$ Denote  $q=\alpha _{1}+\alpha _{2}-5tn_{1},$ then ${Z}_{5tn_{1}}(K\xi(f_{1}f_{2}))\subseteq A(n_{2})\otimes A(q).$ Observe that $P_{1}\subseteq P_{2},$ where 
$P_{2}=\sum_{j: j,j+q\in [ (k-1)n_{3}+n_{2}+1,kn_{3}-n_{2}-1] }(A(n_{3}-n_{2})\otimes A(i))Z_{5tn_{1}}(Kf_{1}f_{2})(A(3tn_{3})\otimes A(3tn_{3}))$.
  Notice that $f_{1}f_{2}\in D$, and so $ Z_{5tn_{1}}(Kf_{1}f_{2})\subseteq F(f, n_{1}, n_{2},n_{3})$. 
By Lemma \ref{independent2}, $f_{1}f_{2}\subseteq A(n_{2})\otimes A$, hence 
$P_{2}\subseteq Q(F(f,n_{1}, n_{2}), n_{2}, n_{3})\subseteq Q$.
\end{proof}

Let $1\leq \alpha $, $0< n_{1}<n_{2}$ be natural numbers and let $F\subseteq  A(\alpha )\otimes A(n_{1})$. Recall that 
 \[Q'(F, n_{1}, n_{2})=\sum_{k=0}^{\infty }\sum_{j\in W(n_{1}, n_{2})}(A(j)\otimes (A(k\cdot n_{2})+A(k\cdot n_{2}-n_{1}))F(A\otimes A),\] where 
$j\in W(n_{1}, n_{2})$ if and only if $[j,j+\alpha  ]$ is disjoint with all the intervals $[kn_{2}-n_{1}, kn_{2}+n_{1}]$ for $k=0, 1, 2, \ldots $.

Let $x_{i}\in \{x,y\}$ be generators of $A$, denote $(x_{1}x_{2}\cdots x_{n})^{op}=x_{n}x_{n-1}\cdots x_{1}$.
Let $a_{i}, b_{i}\in A$. For $r=\sum_{i}a_{i}\otimes b_{i}$ define $\xi(r)=\sum_{i}a_{i}\otimes (b_{i})^{op}.$  
 The following Lemma is similar to Lemma \ref{final}

 Recall that if $a\in A(n)\otimes A(m)$, for some $m,n$, then $l(a)=n$, $r(a)=m$. 
  We are now ready to prove the two  main theorems in this section. 

\begin{lemma}\label{finalop} Let $f \in H$ where $H= \sum_{1\leq i,j\leq t}A(i)\otimes  A(j)$  for some $t  \geq 1$. Let $n_{1},$ $n_{2}, n_{3}$ be natural numbers such that $n_{1}$ divides $n_{2}$, $n_{2}$ divides $n_{3}$ and $10tn_{1}< n_{2}$, $10tn_{2}< n_{3}$, $10tn_{3}<m$, $10t<n_{1}$. Let $U(n_{1}, n_{3})=\bigcup_{k=1}^{\infty }[kn_{3}-n_{1}, kn_{3}+n_{1}]$. Let $G_{1}(f,n)$, $G_{2}(f,n)$ be as in Lemma \ref{G}, and $T(f,n)$, $T'(f,n)$ be as in Lemmas \ref{introducingT} and \ref{introducingT'} and  $E'(f,n_{1}, n_{2})$ be as in Lemma \ref{E} (and let $\xi (r)$ be defined as above for any $r\in A\otimes A$.) 
 Then there is set $F'(f,n_{1}, n_{2}, n_{3})\subseteq A(n_{2})\otimes \sum_{i=1}^{2tn_{2}+2t}A(i)$ with $\dim _{K} F(f,n_{1}, n_{2}, n_{3})\leq 2^{40tn_{1}}n_{2} $ and such that 
 the two-sided  ideal of $A\otimes A$ generated by $\xi(f^{m})$ belongs to the right ideal of $A\otimes A$ generated by elements from  $Q'+P'$ where $Q'=Q(\xi (T(f,n_{1})), n_{1}, n_{2}))+Q(F'(f,n_{1}, n_{2}, n_{3}), n_{2}, n_{3}),$ and 

\[P'=\sum_{ \alpha=0}^{ \infty }\sum_{k=1}^{\infty }\sum_{j+\alpha \in U(n_{1}, n_{3})}(A(k\cdot n_{3})\otimes A(j))\xi(E'(f,n_{1}, n_{2},\alpha ))(A\otimes A),\]

 where $E'(f,n_{1}, n_{2})=\sum_{i=0}^{\infty } E'(f,n_{1}, n_{2},\alpha )$, with $E'(f,n_{1}, n_{2},\alpha)\subseteq  A\otimes A(\alpha)$. 
Moreover, if $n_{2}>100tn_{1}$ then $F(f,n_{1}, n_{2}, n_{3})\subseteq A(n_{2})\otimes \sum_{i>{n_{2}\over 10t}}^{2tn_{2}+2t}A(i).$

\end{lemma}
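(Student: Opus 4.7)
The approach is to parallel the proof of Lemma \ref{final}, with $f^m$ replaced throughout by $\xi(f^m)$, leveraging the identity
\[ \xi\bigl((a\otimes b)\,r\,(c\otimes d)\bigr) \;=\; (a\otimes d^{op})\,\xi(r)\,(c\otimes b^{op}) \]
for all $a,b,c,d\in A$ and $r\in A\otimes A$. This identity shows that $\xi$ is a linear bijection of $A\otimes A$ which carries the two-sided ideal generated by $f^m$ onto the two-sided ideal generated by $\xi(f^m)$. Hence the plan is first to apply Lemma \ref{final} to $f^m$, then to apply $\xi$ to the resulting inclusion, and finally to rewrite each summand on the right in the form demanded by the statement.

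The set $F'(f,n_1,n_2,n_3)$ will be constructed in direct analogy with $F(f,n_1,n_2,n_3)$ from Lemma \ref{final}. Setting $D = T(f,n_2-n_1)T(f,n_1)+T(f,n_1)T(f,n_2-n_1)+T(f,n_2)$, I would define $F'(f,n_1,n_2,n_3) := \xi(D) + \xi(Z_{5tn_1}(D)) + \xi(\bar Z_{5tn_1}(D))$. The dimension bound $\dim_K F'(f,n_1,n_2,n_3)\leq 2^{40tn_1}n_2$ and the degree bounds for the second tensor factor follow immediately from Lemma \ref{1} and from Lemmas \ref{introducingT} and \ref{introducingT'}, because $\xi$ preserves both dimensions and degrees of the second tensor component. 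Using the identity above, $\xi(Q(T(f,n_1),n_1,n_2))$ can be absorbed into a right ideal of the form $Q(\xi(T(f,n_1)),n_1,n_2)$ after collecting factors according to the new left/right position on the second tensor slot, and similarly for the $\xi$-image of the $Q(F(f,n_1,n_2,n_3),n_2,n_3)$ summand. The image under $\xi$ of $P\cdot(A(n_3)\otimes A(n_3))$ contributes to $P'$ once one decomposes $E'(f,n_1,n_2)=\sum_\alpha E'(f,n_1,n_2,\alpha)$ with $E'(f,n_1,n_2,\alpha)\subseteq A\otimes A(\alpha)$, and tracks how the left-constraint $j\in U(n_1,n_3)$ in $P$ becomes the shifted constraint $j+\alpha\in U(n_1,n_3)$ after applying $\xi$.

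The main technical obstacle will be precisely this bookkeeping of left/right positions in the second tensor factor. Since $\xi$ reverses the order of multiplication there, every occurrence of $Z_{5tn_1}$ in the proof of Lemma \ref{final} must be swapped with $\bar Z_{5tn_1}$ (and vice versa) when the argument is redone here; and the subcases in which the interval $[j,j+\alpha_1+\alpha_2]$ straddles the boundary of the bad window around $kn_3$ must be reworked to produce the shifted $j+\alpha$ constraint in $P'$ rather than the unshifted $j$ constraint of $P$. Once this adjustment is carried out uniformly in Cases 1, 2, 3 as in the proof of Lemma \ref{final}, the dimension and degree bounds transfer mechanically and the inclusion of the ideal generated by $\xi(f^m)$ in $Q'+P'$ follows.
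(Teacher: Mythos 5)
Your construction of $F'(f,n_1,n_2,n_3)$ is essentially the paper's: since $\xi$ is a degree-preserving involution of the second tensor slot, $\xi(Z_{5tn_1}(D))=\bar Z_{5tn_1}(\xi(D))$ and $\xi(\bar Z_{5tn_1}(D))=Z_{5tn_1}(\xi(D))$, so your $F'$ coincides with the paper's $D+Z_{5tn_1}(D)+\bar Z_{5tn_1}(D)$ built from $D=\xi(T(f,n_2-n_1)T(f,n_1)+T(f,n_1)T(f,n_2-n_1)+T(f,n_2))$, and the dimension and degree bounds do transfer. You have also correctly identified the two cosmetic changes (the $Z/\bar Z$ swap and the shifted constraint $j+\alpha\in U(n_1,n_3)$ in $P'$). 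The gap is in the headline reduction: ``apply Lemma \ref{final} to $f^m$, then apply $\xi$ to the resulting inclusion, then rewrite each summand.'' The conclusion of Lemma \ref{final} places the ideal inside a \emph{right} ideal generated by $Q+P(A(n_3)\otimes A(n_3))$, and $\xi\bigl((a\otimes b)r(c\otimes d)\bigr)=(a\otimes d^{op})\xi(r)(c\otimes b^{op})$ turns right multiplication in the second slot into left multiplication there. Consequently, after applying $\xi$ the window conditions that define $Q(T(f,n_1),n_1,n_2)$, $Q(F,n_2,n_3)$ and the $U(n_1,n_3)$-condition of $P$ sit on the factors that end up at the far right of the second component, where they are useless for membership in a right ideal of the form $Q'+P'$; meanwhile the new left factors in the second slot, which are exactly the ones that must satisfy those window conditions, come from the arbitrary right multipliers $d$ of the original right ideal and carry no constraint at all. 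So $\xi$ of the right ideal generated by $Q+P(A(n_3)\otimes A(n_3))$ is simply not contained in the right ideal generated by $Q'+P'$, and no rewriting of summands can recover the missing degree information: your plan overshoots by replacing the ideal generated by $f^m$ with a much larger set before applying $\xi$.

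What the paper does instead is apply $\xi$ one stage earlier, to the decomposition of Lemma \ref{E}, where the ideal generated by $f^m$ sits in a right ideal whose outer multipliers on both sides are (essentially) unconstrained; there nothing is lost under $\xi$, and one obtains the ideal generated by $\xi(f^m)$ inside $\sum_{k,j'}(A(kn_3-n_2)\otimes A(j'))\,\xi\bigl(T(f,n_2-n_1)T(f,n_1)(E\cap E')\bigr)(A\otimes A)$ with $j'$ arbitrary. The entire case analysis of Lemma \ref{final} must then be re-run for this new configuration: the quantity compared with the windows is $j+j'$, where $j'$ is the new (post-$\xi$) left-multiplier degree and $j$ is the right degree of the homogeneous element of $E\cap E'$, and the window conditions are re-derived from the right degrees of the reversed factors $\xi(f_2)$, $\xi(f_3)$, $\xi(f_1f_2)$, $\xi(f_3f_4)$, not transported from the conditions already established in Lemma \ref{final}. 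This is more than reworking the straddling subcases you mention; until the argument is restarted from Lemma \ref{E} and the cases are re-derived for the $\xi$-twisted factors, the inclusion of the ideal generated by $\xi(f^m)$ in $Q'+P'$ has not been proved.
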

\begin{proof}   Let  $D= \xi (T(f,n_{2}-n_{1})T(f,n_{1})+T(f,n_{1})T(f,n_{2}-n_{1})+T(f,n_{2})) $,   
\[F'(f,n_{1}, n_{2}, n_{3})=D+Z_{5tn_{1}}(D)+{\bar Z}_{5tn_{1}}(D),\] where $Z_{n}(D)$, ${\bar Z}_{n}(D)$  are as in Lemma \ref{1}.  
By Lemma \ref{1}, we get $\dim _{K} F'(f,n_{1}, n_{2}, n_{3})\leq (1+2^{5tn_{1}+1})\dim _{K} D \leq  2^{40tn_{1}}n_{2}.$
 By Lemma \ref{introducingT}, $T(f,n), \xi (T(f,n))\subseteq A(n)\otimes \sum_{i>{n-t\over 2t}}^{2nt+t}A(i)$, hence 
   $F'(f,n_{1}, n_{2}, n_{3})\subseteq A(n_{2})\otimes \sum_{i>{{n_{2}-2t}\over 2t}-5tn_{1}}^{2tn_{2}+2t}A(i).$
Notice that  if $n_{2}>100tn_{1}$ then $F'(f,n_{1}, n_{2}, n_{3})\subseteq A(n_{2})\otimes \sum_{i>{n_{2}\over 10t}}^{2tn_{2}+2t}A(i).$

  Let $f_{1}\in T(f,n_{2}-n_{1})$, $f_{2}\in T(f,n_{1})$, and let  $j<n_{3}$.
 By Lemma \ref{introducingT}, $f_{1}\in A(n_{2}-n_{1})\otimes A(\alpha _{1})$, $f_{2}\in A(n_{1})\otimes A(\alpha _{2})$, 
for some $ \alpha _{1}\leq 2tn_{1}+t,$ ${n_{2}-n_{1}-t\over 2t}\leq \alpha _{2}\leq 2t(n_{2}-n_{1})+t.$
 
  Let $J$ denote the ideal generated by $\xi(f^{m})$ in $A\otimes A$, and let $J'$ denotes the ideal generated by $f^{m}$ in $A\otimes A$. By Lemma $\ref{E}$, $\xi (J')\subseteq \sum_{k, j=0}^{\infty }\xi ((A(k\cdot n_{3}-n_{2})\otimes A(j))T(f,n_{2}-n_{1})T(f,n_{1})(E(f,n_{1},n_{2})\cap E'(f,n_{1}, n_{2}, n_{3})))(A\otimes A)$. Hence, $J\subseteq \sum_{k, j'=0}^{\infty }((A(k\cdot n_{3}-n_{2})\otimes A(j'))\xi (T(f,n_{2}-n_{1})T(f,n_{1})(E(f,n_{1},n_{2})\cap E'(f,n_{1}, n_{2}, n_{3})))(A\otimes A)$. 
Recal that sets $E(f,n_{1},n_{2})$ and $ E'(f,n_{1}, n_{2})$ are homogeneous, and hence they are generated by homogeneous elements. 

Let $f\in A(s)\otimes A(j)$ be such that $f\in E(f,n_{1},n_{2})\cap E'(f,n_{1}, n_{2}, n_{3})$. 
    Assume first that  $j+j'\in [k'n_{3}-n_{1}, k'n_{3}+n_{1}]$, for some $k'$. Notice that the case $k'=0$ is not possible as elements from  $E(f,n_{1},n_{2})$ have right degree larger than $n_{1}$. It follows that  
    $(A(k\cdot n_{3}-n_{2})\otimes A(j'))\xi(f_{1}f_{2}f)\subseteq 
   (A(k\cdot n_{3})\otimes A(j')\xi(f)(A\otimes A)\subseteq 
P',$ because $\xi(f)\in \xi(E'(f,n_{1}, n_{2},j))$ and $j+j'\in U(n_{1}, n_{3}).$

  Assume now that $j+j'\notin [k'n_{3}-n_{1}, k'n_{3}+n_{1}]$, for any $k'$. Recall that $f\in E(f, n_{1}, n_{2}),$ so  
 $f=f_{3}f_{4}f_{5}$ for some $f_{3}\in A(n_{1})\otimes A(\alpha _{3})$, $f_{4}\in A(n_{2}-n_{1})\otimes A(\alpha _{4})$, $f_{5}\in A(\beta )\otimes A(\alpha _{5})$ 
 for some $\alpha _{3}\leq 2tn_{1}+t,$ ${{n_{2}-n_{1}-t}\over 2t}\leq \alpha _{4}\leq 2t(n_{2}-n_{1})+t,$  and some $\beta, \alpha _{5}.$  
 Observe that $\alpha _{3}+\alpha _{4}+\alpha _{5}=j$ because $f_{3}f_{4}f_{5}=f$.
 
 Notice that it suffices to show that $(A(n_{3}-n_{2})\otimes A(j'))\xi(f_{1}f_{2}f_{3}f_{4}f_{5})\subseteq Q.$ We have the following cases:

{\bf Case 1.} $j+j'\in [kn_{3}+n_{2}+1, (k+1)n_{3}-n_{2}]$ for some $k\geq 0$. Then we have the following two subcases.

{\bf Case 1.a} $j+j'\in [kn_{3}+n_{2}+1, (k+1)n_{3}-4tn_{2}]$ for some $k\geq 0$.
 Then, $(A(n_{3}-n_{2})\otimes A(j'))\xi(f_{1}f_{2}f_{3}f_{4}f_{5})\subseteq \sum_{kn_{3}+n_{2}<j+j', j+j'+\alpha _{1}+\alpha _{2}< (k+1)n_{3}-n_{2}}(A(n_{3}-n_{2})\otimes A(j+j'))\xi(f_{1}f_{2})(A\otimes A)\subseteq Q(D,n_{2},n_{3})\subseteq Q$, since $\xi(f_{1}f_{2})\in D\subseteq F(f, n_{1}, n_{2}, n_{3})$.

{\bf Case 1.b} $j+j'\in [kn_{3}+4tn_{2}, (k+1)n_{3}-n_{2}]$ for some $k\geq 0$.
 We have, $((A(n_{3}-n_{2})\otimes A(j))\xi(f_{1}f_{2}f_{3}f_{4}f_{5})\subseteq \sum_{kn_{3}+2n_{2}<j+j',j+j'-\alpha _{3}-\alpha _{4})<(k+1)n_{3}-n_{2} }(A(n_{3})\otimes A(j+j'-\alpha _{3}-\alpha _{4})\xi(f_{3}f_{4})(A\otimes A)\subseteq Q(D,n_{2},n_{3})\subseteq Q,$
 since $\xi(f_{3}f_{4})\in D.$

{\bf Case 2.}  $j+j'\in [kn_{3}+n_{1}+1, kn_{3}+n_{2}]$, for some $k$. We have the following two subcases. 

{\bf Case 2a.} $j+j'\in [kn_{3}+n_{1}+1, kn_{3}+n_{2}-4tn_{1}]$, for some $k$. In this case 
 $(A(n_{3}-n_{2})\otimes A(j))\xi(f_{1}f_{2}f_{3}f_{4}f_{5})\subseteq \sum_{j,j':j+j'\in [kn_{3}+n_{1}+1, kn_{3}+n_{2}-4tn_{1}] }(A(n_{3}-n_{1})\otimes A(i))\xi(f_{2})(A\otimes A)\subseteq Q(\xi(T(f,n_{1})),n_{1},n_{2})\subseteq Q$ (because $\alpha _{3}\leq 2tn_{1}+t\leq 4tn_{1}-n_{1}-1,$ so $j+j',j+j'+\alpha _{3}\in [kn_{3}+n_{1}+1, kn_{3}+n_{2}-n_{1}-1]$).

 {\bf Case 2b.} $j+j'\in [kn_{3}+n_{2}-4tn_{1}, kn_{3}+n_{2}]$, for some $k.$ Denote $i=j+j'.$ 
 By Lemma \ref{independent2}, we have  
$(A(n_{3}-n_{2})\otimes A(j))\xi(f_{1}f_{2}f_{3}f_{4}f_{5})\subseteq E$ where $E=
\sum_{i:i, i+\alpha _{1}+\alpha _{2}\in [kn_{3}+n_{2}-4tn_{1}, (k+1)n_{3}-n_{2}-1],0\leq k'}(A(k'\cdot n_{3}-n_{2})\otimes A(i))\xi(f_{1}f_{2})(A\otimes A).$

 Let $q=\alpha _{1}+\alpha _{2}-5tn_{1},$ then ${\bar Z}_{5tn_{1}}(K\xi(f_{1}f_{2}))\subseteq A(n_{2})\otimes A(q).$ By Lemma \ref{1}, applied for $n=5tn_{1}$ and for the linear space $K\xi(f_{1}f_{2})$, we get  
 \[E\subseteq \sum_{i:i,i+q\in [kn_{3}+n_{2}+1, (k+1)n_{3}-n_{2}-1],0\leq k'}(A(k'\cdot n_{3})\otimes A(i))){\bar Z}_{5tn_{1}}(K\xi(f_{1}f_{2}))(A\otimes A).\] Observe now 
that  $\xi(f_{1}f_{2})\in D$,  so ${\bar Z}_{5tn_{1}}(K\xi(f_{1}f_{2}))\subseteq F'(f, n_{1}, n_{2}, n_{3})$ hence  
$E\subseteq Q(F'(f,n_{1},n_{2},n_{3}),n_{2},n_{3})\subseteq Q$,

{\bf Case 3.}  $j+j'\in [kn_{3}-n_{2}, kn_{3}-n_{1}-1]$, for some $k$. We have the following two subcases.

{\bf Case 3a.} $j+j'\in [kn_{3}-n_{2}+4tn_{1}, kn_{3}-n_{1}-1]$, for some $k$. Denote $i=j+j'.$ 
 Then    
$[i-\alpha _{3}, i]$ is disjoint with any interval $[k'n_{2}-n_{1}, k'n_{2}+n_{1}]$, because $i\leq kn_{3}-n_{1}-1,$ and $i-\alpha _{3}\geq kn_{3}-n_{2}+4tn_{1}-\alpha _{3}>kn_{3}-n_{2}+n_{1}+1.$  
Hence,  $(A(n_{3}-n_{2})\otimes A(j))\xi(f_{1}f_{2}f_{3}f_{4}f_{5})\subseteq \sum _{k}\sum_{i: i,i-\alpha _{3}\in [kn_{3}-n_{2}+n_{1}+1, kn_{3}-n_{1}-1 ] }(A(n_{3})\otimes A(i-\alpha _{3}))\xi(f_{3})(A\otimes A)\subseteq Q(\xi(T(f,n_{1})),n_{1},n_{2})\subseteq Q.$

 {\bf Case 3b.} $j+j'\in [kn_{3}-n_{2}, kn_{3}-n_{2}+4tn_{1}]$, for some $k$. Denote $i=j+j'-\alpha _{3}-\alpha _{4}.$ 
 Then, $i, i+\alpha _{3}+\alpha _{4}\in [kn_{3}-n_{2}-2tn_{2}-2t, kn_{3}-n_{2}+4tn_{1}],$ and so $i, i+\alpha _{3}+\alpha _{4}\in [(k-1)n_{3}+n_{2}+1, kn_{3}-n_{2}+4tn_{1}].$    
 By Lemma \ref{independent2}, applied for the linear space $K\xi(f_{3}f_{4})$, $p=5tn_{1}$  we have 
$(A(n_{3}-n_{2})\otimes A(j))\xi(f_{1}f_{2}f_{3}f_{4}f_{5})\subseteq P_{1}$ where 
 $P_{1}=\sum_{i:i,i+\alpha_{3}+\alpha _{4}\in [ (k-1)n_{3}+n_{2}+1, kn_{3}-n_{2}+4tn_{1}]}(A(n_{3})\otimes A(i))\xi(f_{3}f_{4})(A\otimes A).$
 Denote  $q=\alpha _{3}+\alpha _{4}-5tn_{1},$ then $ Z_{5tn_{1}}(K\xi(f_{3}f_{4}))\subseteq A(n_{2})\otimes A(q).$ 
It follows that $P_{1}\subseteq P_{2}$ where  
\[P_{2}=\sum_{i:i,i+q\in [ (k-1)n_{3}+n_{2}+1, ,kn_{3}-n_{2}-1]}(A(n_{3})\otimes A(i))Z_{5tn_{1}}(Kf_{1}f_{2})(A\otimes A).\]
  Notice that $\xi(f_{3}f_{4})\in D$, and so $ Z_{5tn_{1}}(K\xi(f_{3}f_{4}))\subseteq F'(f, n_{1}, n_{2},n_{3}),$ hence 
$P_{2}\subseteq Q(F(f,n_{1}, n_{2}), n_{2}, n_{3})\subseteq Q$.
\end{proof}

\section{Appendix: Proof of Theorem \ref{S}. Constructing space $S$}\label{appendix4}

 In this section we will show that there is a nil ideal $N$ in $A\otimes A$ such that $N\subseteq S$ for some `good' linear space $S\subseteq A\otimes A$. Space $S$ will be constructed in Corollary $3.5$  in this section. 

 We will start with a supporting lemma which can be read independently of the rest of the paper.  

\begin{lemma}\label{independent2}
 Let $n_{2}, i, i', \alpha , \beta $  be natural numbers such that $4p<n_{2}$, $-p\leq i,i'\leq p$, $0\leq \alpha , \beta \leq p.$ Let $G\subseteq A(n_{2}-\alpha )\otimes A(n_{2}-\beta )$ be a linear space. Then there is set $Z_{i,i' }(G)\subseteq A(n_{2})\otimes A(n_{2})$ with 
 $\dim _{K}Z_{i,i' }(G)<2^{6p+2}\dim_{K} G$ and such that for all $k,k'\geq p$,
 \[(A(k+i)\otimes A(k'+i')) G (A(3p)\otimes A(3p)) \subseteq  (A(k)\otimes A(k'))Z_{i,i' }(G) (A\otimes A).\]
\end{lemma}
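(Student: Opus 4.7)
The strategy is to take any generating element of the left-hand side, written as $(x\otimes y)\,g\,(u\otimes v)$ with $x\in\mathcal M(k+i)$, $y\in\mathcal M(k'+i')$, $g\in G$, $u,v\in\mathcal M(3p)$, and refactor it as $(\tilde x\otimes \tilde y)\,z\,(\tilde u\otimes\tilde v)$ with $|\tilde x|=k$, $|\tilde y|=k'$ and middle $z\in A(n_2)\otimes A(n_2)$, by rearranging a small number ($\le 2p$) of letters near each of the four endpoints of the two tensor factors. On the left-tensor side, depending on the signs of $i$ and of $i-\alpha$, the rearrangement either pulls $\max(0,i)$ letters from $x$ and $\max(0,\alpha-i)$ letters from $u$ into the middle, or peels $\max(0,-i)$ letters off $g$'s left component into $\tilde x$ and $\max(0,i-\alpha)$ letters off into $\tilde u$; the right tensor factor is treated analogously. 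I will encode every sign case uniformly by building $Z_{i,i'}(G)$ from appropriate truncations of $G$ padded with short prefixes and suffixes.

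\textbf{Construction and dimension.} Set $q_L=\max(0,-i)$, $r_L=\max(0,i-\alpha)$, $q_R=\max(0,-i')$, $r_R=\max(0,i'-\beta)$; these are the peel-off amounts at the four ends of $g$'s two components, all at most $p$, and since $\alpha+q_L+r_L\le 2p<n_2$ (and similarly on the right) the relevant degrees stay positive. Using the direct-sum decomposition $A(n_2-\alpha)=\bigoplus_{\mu_1\in\mathcal M(q_L),\mu_2\in\mathcal M(r_L)}\mu_1\cdot A(n_2-\alpha-q_L-r_L)\cdot\mu_2$ (and its right-factor analogue), each $g\in G$ has a unique decomposition $g=\sum_{\mu_1,\mu_2,\mu'_1,\mu'_2}(\mu_1\otimes\mu'_1)\,h_{g,\mu_1,\mu_2,\mu'_1,\mu'_2}\,(\mu_2\otimes\mu'_2)$ with $h_{g,\cdot}\in A(n_2-\alpha-q_L-r_L)\otimes A(n_2-\beta-q_R-r_R)$. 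I then define $Z_{i,i'}(G)$ as the $K$-linear span of all $(c\otimes c')\,h_{g,\cdot}\,(d\otimes d')$, where $|c|=\max(0,i)$, $|d|=\max(0,\alpha-i)$, $|c'|=\max(0,i')$, $|d'|=\max(0,\beta-i')$, as $g$ ranges over a basis of $G$ and $c,c',d,d'$ and the $\mu$'s range over monomials of the prescribed lengths. A one-line check using $\max(x,0)+\max(-x,0)=|x|$ gives $|c|+(n_2-\alpha-q_L-r_L)+|d|=n_2$ in every sign case (and similarly for the right factor), so $Z_{i,i'}(G)\subseteq A(n_2)\otimes A(n_2)$; the same identity bounds $\dim_K Z_{i,i'}(G)\le 2^{q_L+r_L+|c|+|d|+q_R+r_R+|c'|+|d'|}\dim_K G = 2^{|i|+|\alpha-i|+|i'|+|\beta-i'|}\dim_K G \le 2^{6p}\dim_K G<2^{6p+2}\dim_K G$.

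\textbf{Verification of the inclusion and main obstacle.} For the inclusion I substitute the decomposition of $g$ into $(x\otimes y)\,g\,(u\otimes v)$ and distribute the $\mu$'s through the tensor product to get $\sum(x\mu_1\otimes y\mu'_1)\,h_{g,\cdot}\,(\mu_2 u\otimes\mu'_2 v)$. In each summand, on the left tensor factor I split $x\mu_1=\tilde x\cdot c$ and $\mu_2 u=d\cdot\tilde u$: when $i\ge 0$ the variable $\mu_1$ is empty, $\tilde x$ is the first $k$ letters of $x$, and $c$ is the remaining $i$; when $i<0$, $\tilde x=x\mu_1$ has length exactly $k$ and $c$ is empty; the split $\mu_2 u=d\cdot\tilde u$ is handled analogously with the sign of $\alpha-i$. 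A mirror splitting on the right tensor factor produces $(c'\otimes\tilde y,d'\otimes\tilde v)$, and assembling everything yields $(\tilde x\otimes\tilde y)\cdot\bigl[(c\otimes c')\,h_{g,\cdot}\,(d\otimes d')\bigr]\cdot(\tilde u\otimes\tilde v)$, where $|\tilde x|=k$, $|\tilde y|=k'$, the bracketed middle lies in $Z_{i,i'}(G)$ by construction, and the trailing tensor lies in $A\otimes A$. The main obstacle is not conceptual but organizational: the three sign regimes $i<0$, $0\le i\le\alpha$, $i>\alpha$ (and their primed analogues) generate a nine-case table, and the real work is verifying that the $\max(0,\pm\cdot)$ formulas unify all of them both for the degree check $z\in A(n_2)\otimes A(n_2)$ and for the dimension bound.
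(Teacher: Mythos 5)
Your proof is correct and is essentially the paper's own argument: the paper likewise reshapes each element by transferring a bounded number of letters across the block boundaries (using the monomial basis, so each transferred letter costs a factor $2$ in dimension), organizing this as four sign cases for $(i,i')$ in which $G$ is first padded by short monomial spaces to form an auxiliary space $E\subseteq A(n_{2}\pm i)\otimes A(n_{2}\pm i')$ and then peeled back into $A(n_{2})\otimes A(n_{2})$ via an analogue of Lemma \ref{independent}. Your $\max$-formula construction merely unifies those four cases into a single pad-and-peel step, and incidentally gives the slightly sharper bound $2^{|i|+|\alpha-i|+|i'|+|\beta-i'|}\dim_{K}G\leq 2^{6p}\dim_{K}G$.
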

\begin{proof}  We have four cases:
{\bf Case $1.$} The case when $i,i'>0$. 
 Denote $E=(A(i)\otimes A(i'))G(A(\alpha )\otimes A(\beta ))$. Then $\dim _{K}E\leq 2^{4p} \dim _{K}G$.
 Note that $E\subseteq A(n_{2}+i)\otimes A(n_{2}+i')$. Similarly, as in Lemma \ref{independent} there is set  $T(E)\subseteq A(n_{2})\otimes A(n_{2})$ such that  $E\subseteq T(E)(A(i)\otimes A(i'))$ and $\dim_{K}(T'(E))\leq 2^{2p+2}\dim _{K}E\leq 2^{6p+2} \dim _{K}G$. 
 We will set then $Z_{i,i'}=T(E)$. Observe that by construction, for every $j, j'\geq 0$,  we have 
 $(A(k+i)\otimes A(k'+i')) G (A(3p)\otimes A(3p)) \subseteq  
(A(k)\otimes A(k'))E(A(3p-\alpha -i)\otimes A(3p-\beta -i'))\subseteq 
(A(k)\otimes A(k'))Z_{i,i'}(A\otimes A)$, as required.

{\bf Case $2.$} The case when $i,i'\leq 0$. 
 Denote $E=\sum _{\alpha ,\beta }G(A(-i+\alpha )\otimes A(-i'+\beta )$. Then $\dim _{K}E\leq 2^{4p} \dim _{K}G$.
 Note that $E\subseteq A(a_{2}-i)\otimes A(a_{2}-i')$ (where $i,i'\leq 0$). 
 By an analogon of Lemma \ref{independent} 
 there is set $T'(E)\subseteq A(a_{2})\otimes A(a_{2})$ such that  $E\subseteq (A(-i)\otimes A(-i'))T'(E)$ and $\dim_{K}(T'(E))\leq 2^{2p+2}\dim _{K}E\leq 2^{6p+2} \dim _{K}G$. 
 We will set then $Z_{i,i'}=T'(E)$. Observe that by construction, for every $j, j'\geq 0$,  
\[A(k+i)\otimes A(k'+i')) G (A(3p)\otimes A(3p)) \subseteq  (A(k)\otimes A(k'))Z_{i,i' }(A\otimes A) ,\] as required.

{\bf Case $3.$} The case when $i>0$, $i'\leq 0$. 
 Denote $E=(A(i)\otimes K)G(A(\alpha )\otimes A(\beta -i')$. Then $\dim _{K}E\leq 2^{4p} \dim _{K}G$.
 Note that $E\subseteq A(a_{2}+i)\otimes A(a_{2}-i')$ (where $i'\leq 0$).
 By a similar argument as in  Lemma \ref{independent} there is set  $T''(E)\subseteq A(a_{2})\otimes A(a_{2})$ such that  $E\subseteq (K\otimes A(-i'))T''(E)(A(i)\otimes K)$ and $\dim_{K}(T''(E))\leq 2^{2p+2}\dim _{K}E\leq 2^{6p+2} \dim _{K}G$. 
 We will set then $Z_{i,i'}=T''(E)$. Observe that by construction, for every $j, j'\geq 0$,  
\[(A(k+i)\otimes A(k'+i')) G (A(3p)\otimes A(3p)) \subseteq  (A(k)\otimes A(k'))Z_{i,i'}(A\otimes A) ,\] as required.

{\bf Case $4.$} The case when $i\leq 0$, $i'>0$. This case is done similarly as case $3$.
\end{proof}

We are now ready to prove the following theorem.
\begin{theorem}\label{important2} Let  $ f \in \sum_{0<i,j\leq t}A(i)\otimes A(j)$ for some $t  \geq 1$. Let  $n_{1}, n_{2},n_{3}, m$ be natural numbers such that $n_{1}$ divides $n_{2}$, $n_{2}$ 
divides $n_{3}$, $10tn_{1}<n_{2},$ $10tn_{2}+2n_{3}<n_{3}$, $10tn_{3}+2n_{3}<m$, $10t<n_{1}$. Let  $T(f,n_{1})$, $E'(f,n_{1}, n_{2})$,  $F(f,n_{1}, n_{2}, n_{3})$, $F'(f,n_{1}, n_{2}, n_{3})$ be as in Lemmas \ref{final}, \ref{smaller} and \ref{finalop}.
 Then there exists a homogeneous  linear space  
$G(f,n_{1}, n_{2})\subseteq A(n_{2})\otimes A(n_{2})$  with $\dim _{K} G(f, n_{1}, n_{2})<n_{2}^{3} 2^{500tn_{1}}$ 
and such that, for every integer $m >20n_{3}$,  the ideal generated by $f^{m}$ and $\xi(f^{m})$ in $A\otimes A$  is contained in the right ideal of $A\otimes A$,  $M+Q_{1}+Q_{2}+Q_{3}$ where  $Q_{1}=Q(T(f,n_{1})+\xi(T(f,n_{1})),n_{1}, n_{2})$, $Q_{2}=Q(F(f,n_{1}, n_{2}, n_{3}),n_{2}, n_{3})+Q'(F'(f,n_{1}, n_{2}, n_{3}), n_{2}, n_{3})$,  $Q_{3}=Q'(T'(f,n_{1})+\xi(T'(f,n_{1})), n_{1}, n_{2})$ and where  $M=\sum_{i=1, 2, \ldots ;j=1,2,\ldots }((A(i\cdot n_{3})+A(i\cdot n_{3}-n_{2}))\otimes (A(j\cdot n_{3})+A(j\cdot n_{3}-n_{2})))G(f, n_{1},n_{2})(A\otimes A).$
\end{theorem}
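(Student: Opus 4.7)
The strategy is to apply Lemma \ref{final} to $f^m$ and Lemma \ref{finalop} to $\xi(f^m)$, together with their left-right ``mirror'' variants (obtained by rerunning the same arguments with the two tensor components interchanged---so that Lemma \ref{introducingT'} and Lemma \ref{smaller} replace Lemma \ref{introducingT}, and $T'(f,\cdot)$ replaces $T(f,\cdot)$). The non-residual $Q$-type contributions from Lemmas \ref{final} and \ref{finalop} together yield the summands of $Q_1+Q_2$ that have the form $Q(\cdot,\cdot,\cdot)$, while the mirror variants produce the $Q'$-type contributions completing $Q_2$ and all of $Q_3$. What remains is to absorb the four ``boundary'' residual spaces of the form $P(A(n_3)\otimes A(n_3))$, $P'(A(n_3)\otimes A(n_3))$ from the originals, plus their mirror analogues, into $M$ by choosing $G(f,n_1,n_2)$ appropriately.

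Each residual has the general shape $(A(kn_3+i)\otimes A(k'n_3+i'))\,e\,(A(n_3)\otimes A(n_3))$ with $|i|,|i'|\leq n_1$ (with exactly one of $i,i'$ being zero for each residual type) and $e$ a homogeneous element of $E'(f,n_1,n_2)$, $\xi(E'(f,n_1,n_2))$, or one of their mirror counterparts. By Lemma \ref{E} together with Lemmas \ref{G}, \ref{introducingT}, \ref{introducingT'}, and \ref{smaller}, each such $E'$ is, modulo a right-multiplicative $(A\otimes A)$-tail, spanned by elements of bounded bidegree; any excess left (or right) degree beyond $n_2$ can be pushed into the trailing $A(n_3)$-factor on the same side, reducing $e$ to bidegree $(n_2-\alpha,n_2-\beta)$ with $0\leq\alpha,\beta\leq n_1$. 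Applying Lemma \ref{independent2} with $p=n_1$ then produces a linear space $Z_{i,i'}(Ke)\subseteq A(n_2)\otimes A(n_2)$ of dimension $\leq 2^{6n_1+2}$ such that
\[
(A(kn_3+i)\otimes A(k'n_3+i'))\,e\,(A(n_3)\otimes A(n_3))\subseteq(A(kn_3)\otimes A(k'n_3))\,Z_{i,i'}(Ke)\,(A\otimes A),
\]
which lies in $M$ provided $Z_{i,i'}(Ke)\subseteq G(f,n_1,n_2)$. Define $G(f,n_1,n_2)$ to be the linear span of all such $Z_{i,i'}(Ke)$ as $(i,i')\in[-n_1,n_1]^2$ and $e$ ranges over a basis of the four $E'$-type spaces.

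For the dimension bound, Lemmas \ref{G}, \ref{introducingT}, \ref{introducingT'}, and \ref{smaller} give $\dim_K E'(f,n_1,n_2)\leq 2^{O(tn_1)}\cdot n_2^{O(1)}$, and similarly for the mirror. Multiplying by $2^{6n_1+2}$ from Lemma \ref{independent2} and summing over the $(2n_1+1)^2$ shift pairs and the $O(1)$ residual types yields $\dim_K G(f,n_1,n_2)<n_2^3\cdot 2^{500tn_1}$, as required. The main obstacle is the bookkeeping in the first step: justifying the mirror versions of Lemmas \ref{final} and \ref{finalop}, which require rerunning their proofs with left and right interchanged throughout (routine but tedious), and then ensuring that the excess-degree stripping before Lemma \ref{independent2} is performed uniformly across all homogeneous components of the four $E'$-type spaces so that every residual is captured by the single space $G(f,n_1,n_2)\subseteq A(n_2)\otimes A(n_2)$.
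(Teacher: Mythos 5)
There is a genuine gap, and it sits exactly at the step you describe as the heart of your argument: absorbing the residual spaces $P(A(n_{3})\otimes A(n_{3}))$ and $P'(A(n_{3})\otimes A(n_{3}))$ entirely into $M$. You claim that each residual core $e$ taken from $E'(f,n_{1},n_{2})$ (or its $\xi$-image) can, after pushing ``excess'' degree into the trailing factor, be reduced to bidegree $(n_{2}-\alpha,n_{2}-\beta)$ with $0\leq\alpha,\beta\leq n_{1}$, so that Lemma \ref{independent2} with $p=n_{1}$ places everything inside $(A(kn_{3})\otimes A(k'n_{3}))G(f,n_{1},n_{2})(A\otimes A)\subseteq M$. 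This is false. By Lemma \ref{E}, $E'(f,n_{1},n_{2})=S(f,n_{1},n_{2})+S'(f,n_{1},n_{2})$ with $S(f,n_{1},n_{2})=G_{1}(f,n_{2}-n_{1})T(f,n_{1})(A\otimes A)$, and by Lemma \ref{G} the homogeneous components of $G_{1}(f,n_{2}-n_{1})$ have right degree anywhere between roughly $(n_{2}-n_{1})/2t$ and $n_{2}-n_{1}$; hence the core has left degree $n_{2}$ but right degree spread over a window of width comparable to $n_{2}$, i.e.\ there is a \emph{deficit} relative to $n_{2}$, not an excess, and a deficit cannot be pushed into (or borrowed from) the tail in a dimension-controlled way. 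Since $G(f,n_{1},n_{2})$ is required to lie in $A(n_{2})\otimes A(n_{2})$ with $\dim_{K}G(f,n_{1},n_{2})<n_{2}^{3}2^{500tn_{1}}$, and Lemma \ref{independent2} needs $4p<n_{2}$ and costs a factor $2^{6p+2}$, only components whose right (resp.\ left) core degree lies within $O(tn_{1})$ of $n_{2}$ can be absorbed into $M$; in the regime where the theorem is applied ($n_{2}\gg tn_{1}$) the remaining components cannot be captured by $M$ at all.

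What is missing is precisely the mechanism the paper uses for those remaining components: split $G_{1}(f,n_{2}-n_{1})$ and $G_{2}(f,n_{2}-n_{1})$ into the thin window of components with right (resp.\ left) degree in $[n_{2}-10tn_{1},\,n_{2}-n_{1}]$ --- these, and only these, are fed into Lemma \ref{independent2} (with $p=12tn_{1}$, not $p=n_{1}$) and define $G(f,n_{1},n_{2})$ --- while for the complementary middle-range components one checks that the resulting right-hand degrees $j+\beta$ avoid every interval $[kn_{2}-n_{1},kn_{2}+n_{1}]$, so that those terms land in $Q_{1}=Q(T(f,n_{1})+\xi(T(f,n_{1})),n_{1},n_{2})$ and $Q_{3}=Q'(T'(f,n_{1})+\xi(T'(f,n_{1})),n_{1},n_{2})$ by the very definition of $Q$ and $Q'$ through $W(n_{1},n_{2})$. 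This is also where $Q_{3}$ actually comes from: it is produced by the case analysis of the $S'$-part of the residual (using the factorisation $T'(f,\alpha)T'(f,n_{1})$ inside $S'$), not by left--right mirror versions of Lemmas \ref{final} and \ref{finalop}; appealing to such unproved mirror lemmas would in any case only reproduce the same difficulty, since they would carry residual spaces of their own. As written, your $G(f,n_{1},n_{2})$ does not cover the middle-range residuals, and enlarging it to do so would violate both the constraint $G(f,n_{1},n_{2})\subseteq A(n_{2})\otimes A(n_{2})$ and the stated dimension bound.
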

\begin{proof} 
Let $I'$ be the ideal generated in $A\otimes A$ by $\xi(f^{m-2n_{3}})$ and $f^{m-2n_{3}}$, and $I$ be the ideal generated by 
$\xi(f^{m})$ and $f^{m}.$ Then $I\subseteq I'(A(n_{3})\otimes A(n_{3})),$ since $\xi(f^{m})=\xi (f^{n_{3}}\cdot \xi(f^{m-2n_{3}})\cdot f^{n_{3}}).$   
 So we only need to show that $I'(A(n_{3})\otimes A(n_{3}))\subseteq M+Q_{1}+Q_{2}+Q_{3}$.

 By Lemmas   \ref{final} and \ref{finalop}, the two sided ideal generated by $f^{m-2n_{3}}$ and $\xi(f^{m-2n_{3}})$ belongs to the right ideal of $A\otimes A$ generated by elements from $Q_{1}$, $Q_{2}$ and from $P+P'$ where 
 $P=   \sum_{k=1}^{\infty }\sum_{j'\in U(n_{1}, n_{3})}(A(k\cdot n_{3})\otimes A(j'))(E'(f,n_{1}, n_{2}),$ and from 
 $P'=\sum_{ \alpha=0}^{ \infty }\sum_{k=1}^{\infty }
\sum_{j':j'+\alpha \in U(n_{1}, n_{3})}(A(k\cdot n_{3})\otimes A(j))\xi(E'(f,n_{1}, n_{2},\alpha )).$ 
 So we only need to show that $(P+P')(A(n_{3})\otimes A(n_{3}))\subseteq M+Q_{1}+Q_{2}+Q_{3}$.

 Let $G_{1}(f,n_{2}-n_{1})$, $G_{2}(f,n_{2}-n_{1})$ be as in Theorem \ref{G} applied for $n=n_{2}-n_{1}$, then we can write 
 $G_{1}(f,n_{2}-n_{1})= \sum_{\gamma \geq {n_{2}-n_{1}\over 2t}}^{n_{2}-n_{1}} G_{\gamma }$ 
 for some  $G_{i}\subseteq A(n_{2}-n_{1})\otimes  A(i).$ Let $c$ be the smallest number such that  \[c\geq  {n_{2}-n_{1}\over 2t}.\]

 Write now $G_{1}(f,n_{2}-n_{1})=G+G'$ where $G=\sum_{i=c}^{ n_{2}-10tn_{1}}G_{i}$, $G'=
\sum_{i= n_{2}-10tn_{1}+1}^{n_{2}-n_{1}}G_{i}.$ Similarly, write 
  $G_{2}(f,n_{2}-n_{1})={\bar G}+{\bar G}'$ where ${\bar G}=\sum_{i=c}^{ n_{2}-10tn_{1}}{\bar G}_{i}$, ${\bar G}'=
\sum_{i= n_{2}-10t_{1}+1}^{n_{2}-n_{1}}{\bar G}_{i},$ for suitable  ${\bar G}_{i}\subseteq A(i)\otimes A(n_{2}-n_{1})$.

Define now  $F_{1}(f,n_{1}, n_{2})=\sum_{j= n_{2}-10tn_{1}}^{n_{2}-n_{1}}(D_{j}+{\bar D}_{j})$ where \[D_{j}= 
\sum _{-p\leq i,i'\leq p}Z_{i,i' }(G_{j}), {\bar D}_{j}= 
\sum _{-p\leq i,i'\leq p}Z_{i,i' }({\bar G}_{j}),\]  
 where $Z_{i,i' }(G_{j})$ is as in Lemma \ref{independent2} applied for  $p =12t n_{1}$, for  the same $n_{2}$, and  for $G=G_{j}$ (and
 $Z_{i,i' }({\bar G}_{j})$ for $G={\bar G}_{j}$). 
By Lemma \ref{independent2}, we have $F_{1}(f,n_{1}, n_{2})\subseteq A(n_{2})\otimes A(n_{2})$ and $\dim _{K}F_{1}(f,n_{1}, n_{2})<n_{2}^{3} 2^{500tn_{1}-1}$.

Denote $L_{j'}=\sum_{j\in U(n_{1}, n_{3})}(A(j'n_{3})\otimes A(j))(G+{\bar G})(A(n_{3})\otimes A(n_{3})).$ Denote $p=12tn_{1}$. 
 Observe now that if $j\in U(n_{1}, n_{3})$ then there exists $k', i'$ such that  $-p\leq i'\leq p$, $k'$ is divisible by $n_{3}$ and $j=k'+i'$. 
Apply  Lemma \ref{independent2} for this $k',i'$ and $p$, and for  for $k=j'n_{3}$, $i=0$.  
 We then get for every $j',$ $L_{j'}\subseteq 
(A(k)\otimes A(k'))G_{1}(f, n_{1}, n_{2})(A\otimes A)\subseteq 
M,$ provided that $F_{1}(f,n_{1},n_{2})\subseteq G(f,n_{1}, n_{2}).$

Similarly, define 
$F_{2}(f,n_{1}, n_{2})
=\sum_{j= n_{2}-(t+1)n_{1}}^{n_{2}-n_{1}}(D'_{j}+{\bar D}'_{j})$ where \[D'_{j}= 
\sum _{-p\leq i,i'\leq p}Z_{i,i'}(\xi(G_{j})), {\bar D}'_{j}= 
\sum _{-p\leq i,i'\leq p}Z_{i,i' }(\xi({\bar G}_{j})),\]  
 where $Z_{i,i'}(\xi(G_{j}))$ is as in Lemma \ref{independent2} applied for $p =12t n_{1}$, for  the same $n_{2}, n_{3}$, and  for $G=\xi (G_{j})$ (and $Z_{i,i' }(\xi({\bar G}_{j}))$ for $G=\xi({\bar G}_{j})$).
 Similarly as before, $F_{2}(f,n_{1},n_{2})\subseteq A(n_{2})\otimes A(n_{2})$ and  
  $\dim _{K}F_{2}(f,n_{1}, n_{2})\leq n_{2}^{3} 2^{500tn_{1}-1}.$\\
For $g\in \xi(G)+\xi({\bar G})$ denote ${\bar L}_{j',g}=\sum_{j:j+r(g)\in U(n_{1}, n_{3})}(A(j'n_{3})\otimes A(j))g(A(n_{3})\otimes A(n_{3})).$ Denote $p=12tn_{1}$. Observe now that since $j+r(g)\in U(n_{1}, n_{2})$ and  $g\in \sum_{l,l'=n_{2}-10tn_{1}}^{n_{2}-n_{1}}A(l)\otimes A(l')$ then there exists 
 $k'$ such that $k'+n_{2}$ is divisible by $n_{3}$, and $-p\leq i'\leq p,$ such that $j=k'+i'.$
Apply  Lemma \ref{independent2} for this $k',i'$ and $p$, and for  for $k=j'n_{3}$, $i=0$.  
 We then get for every $j',g'$, $L_{j',g}\subseteq  
(A(k)\otimes A(k'))F_{2}(f, n_{1}, n_{2})(A\otimes A)\subseteq M,$ 
 provided that $F_{2}(f,n_{1},n_{2})\subseteq G(f,n_{1}, n_{2})$.
 Let now $h$ be a homogeneous element such that $h\in g'(A(l)\otimes A(l'))$ for some $g'\in G+\bar G$ and   for some $l,l'$, and denote $g=\xi (g')$.  Observe that then  
 $\xi (h)\in (K\otimes A(l')g(A(l)\otimes K)$. Therefore,   
$ \sum_{j:j+r(h)\in U(n_{1}, n_{3})}(A(j'n_{3})\otimes A(j))\xi(h)(A(n_{3})\otimes A(n_{3}))\subseteq 
 \sum_{j:j+l'+r(g)\in U(n_{1}, n_{3})}(A(j'n_{3})\otimes A(j+l'))\xi(g)(A(n_{3})\otimes A(n_{3}))(A\otimes A)
\subseteq M,$ for some $k''$ divisible by $a_{n_{3}}$, 
 provided that $F_{2}(f,n_{1},n_{2})\subseteq G(f,n_{1}, n_{2}),$ as before (with $j+l'$ instead of $j$). 

We now set $G(f,n_{1}, n_{2})=F_{1}(f,n_{1}, n_{2})+F_{2}(f,n_{1}, n_{2}).$

 Recall that $E'(f, n_{1}, n_{2})=S(f,n_{1}, n_{2})+S'(f,n_{1}, n_{2}),$ as in Lemma \ref{E}.

 We will  now  show that $(P+P')(A(n_{3})\otimes A(n_{3})\subseteq  M+Q_{1}+Q_{2}+Q_{3}.$
 by considering all the possible cases :

{\bf Case 1.} To show that $\sum_{j'=1}^{\infty }\sum_{j'\in U(n_{1}, n_{3})}(A(j'\cdot n_{3})\otimes A(j))S(f,n_{1}, n_{2})(A(n_{3})\otimes A(n_{3}))\subseteq M+Q_{1}+Q_{2}.$
 Notice that $S(f, n_{1}, n_{2})=G_{1}(f,n_{2}-n_{1})T(f,n_{1})=GT(f,n_{1})+G'T(f,n_{1}),$ so we consider the following subcases

{\bf Case 1a}    To  prove that for all $j'> 0,$ we have   $\sum_{j\in U(n_{1},n_{3})}(A(j'n_{3}\otimes A(j)) GT(f,n_{1})(A(n_{3})\otimes A(n_{3}))\subseteq M$, this follows from the first part of this proof.
 
{\bf Case 1b}    To  prove that for all $j'> 0,$ we have   $\sum_{j\in U(n_{1},n_{3})}(A(j'n_{3}\otimes A(j)) G'T(f,n_{1})\subseteq Q_{1}$.
By assumption $c\geq  {n_{2}-n_{1}\over 2t}>2n_{1} $. Observe that  since $c\leq  \beta \leq n_{2}-10tn_{1}$, and $j\in U(n_{1}, n_{3})$ then 
 interval $[j+\beta, j+\beta +n_{1} ]$ is disjoint with any interval $[kn_{2}-n_{1}, kn_{2}+n_{1}]$. It follows that 
then  $\sum_{j\in U(n_{1},n_{3})}(A(j'n_{3}\otimes A(j)) G_{\beta}T(f,n_{1})\subseteq  \sum_{j\in U(n_{1},n_{3})}(A(j'n_{3}+n_{2}-n_{1})\otimes A(j+\beta ))T(f,n_{1})\subseteq  Q(T(f,n_{1}),n_{1}, n_{2})\subseteq Q_{1}.$ 

{\bf Case 2} To show that for all $j'>0$, $\sum_{1\leq j', \alpha }\sum_{j+\alpha \in U(n_{1}, n_{3})}(A(j'\cdot n_{3})\otimes A(j))(\xi(S(f,n_{1}, n_{2})\cap (A\otimes A(\alpha ))(A(n_{3})\otimes A(n_{3}))\subseteq M+Q_{1}+Q_{2}.$
 Notice that $\xi(S(f, n_{1}, n_{2}))=\xi(GT(f,n_{1})+G'T(f,n_{1})),$ so we consider the following subcases

{\bf Case 2a} To  show that  $L_{j'}\subseteq M$ for all $j' >0$,  and all homogeneous $h\in \xi(GT(f,n_{1})(A\otimes A))$ where \[L_{j' ,h}= \sum_{j+r(h) \in U(n_{1}, n_{3})}(A(j'\cdot n_{3})\otimes A(j))\xi(h)(A(n_{3})\otimes A(n_{3})).\]
 Observe that  $L_{j',h }\subseteq M$ by the first part of this proof.

{\bf Case 2b}  To  prove that $L_{j',g}\subseteq Q_{1}$ for every every $j'>0,$ and every  homogeneous  $g\in \xi(GT(f,n_{1})(A\otimes A))$ where $L_{j',g}=\sum_{j: j+r(g)\in U(n_{1}, n_{3})}(A(j'n_{3})\otimes A(j))g.$ Recall that $r(g)$ is the right degree of $g,$ so $r(g)=i$ if $g\in A\otimes A(i)$.
  Observe that if  $g=\xi(g_{1}g_{2}g_{3})$ where $g_{1}\in G_{\beta }, g_{2}\in T(f,n_{1})$, $g_{3}\in A\otimes A$ are homogeneous elements, then   $L_{j',g} \subseteq \sum_{ l:l+\beta +r(g_{2})\in U(n_{1}, n_{3})}(A(j'n_{3}+n_{2}-n_{1})\otimes A(l))\xi (g_{2})(A\otimes A),$ (where $l=j+r(g_{3})$).
 
 By assumption $2n_{1}<c\leq  {n_{2}-n_{1}\over 2t}$ and   $c\leq  \beta \leq n_{2}-10tn_{1}.$ Recall also that $r(g_{2})\leq 2tn_{1}+t,$
 by Lemma \ref{introducingT}. Therefore $l+\beta +r(g_{2})\in U(n_{1}, n_{3})$ yields that  the interval $[l,l+r(g_{2})]$ is disjoint with any interval $[kn_{2}-n_{1}, kn_{2}+n_{1}].$  
  It follows that $L_{j',g}\subseteq Q(\xi(T(f,n_{1})),n_{1}, n_{2})\subseteq Q_{1}.$ 

{\bf Case 3}  To show that for all $j'>0,$  $\sum_{j'=0}^{\infty }\sum_{j\in U(n_{1}, n_{3})}(A(j'\cdot n_{3})\otimes A(j))S'(f,n_{1}, n_{2})(A(n_{3})\otimes A(n_{3}))\subseteq M+Q_{1}+Q_{2}+Q_{3}.$

  By Lemma \ref{E},  $S'(f, n_{1}, n_{2})= G_{2}(f,n_{2}-n_{1})S=\bar {G}S+ \bar {G}'S,$ where $S=\bigcap_{i=1}^{2n_{1}}T'(f,i)T(f,n_{1})(A\otimes A),$ so we consider the following subcases 

{\bf Case 3a} To show that for all $j'>0,$  $\sum_{j'=0}^{\infty }\sum_{j\in U(n_{1}, n_{3})}(A(j'\cdot n_{3})\otimes A(j)){\bar G}S(A(n_{3})\otimes A(n_{3}))\subseteq M.$ This follows from the first part of this proof where we showed that 
 $\sum_{j\in U(n_{1}, n_{3})}(A(j'n_{3})\otimes A(j))(G+{\bar G})(A(n_{3})\otimes A(n_{3}))\subseteq M.$

{\bf Case 3b} To show that for all $j'>0,$ $L_{j'}\subseteq Q_{3}$, where  $L_{j'}=\sum_{j\in U(n_{1}, n_{3})}(A(j'\cdot n_{3})\otimes A(j)){\bar G}'S(A(n_{3})\otimes A(n_{3})).$
 Let $k'$ be divisible by $n_{3}$ and such that $-n_{1}\leq k'-j\leq n_{1}$ (because $j\in U(n_{1}, n_{2})$ such $k'$ exists). Notation $k'\in U$ will mean that $k'$ is divisible by $n_{3}.$
 
Denote $\alpha =n_{1}+k'-j,$ then $\alpha +(n_{2}-n_{1}+j)=k'+n_{2},$ and $0\leq \alpha \leq 2n_{1}.$
  We get  
$L_{j'}\subseteq \sum_{\beta =c}^{n_{2}-10tn_{1}} J_{\beta }$ where $J_{\beta }=((A(j'\cdot n_{3})\otimes A(j)){\bar G}_{\beta }T'(f,\alpha  )T'(f,n_{1})(A\otimes A).$  Recall that $T'(f,\alpha )\subseteq \sum_{i=1}^{2t\alpha +t}A(i)\otimes A(n_{1}),$
(by Lemmas \ref{introducingT'}, \ref{smaller})
 hence \[J_{\beta }\subseteq \sum_{i=1}^{2t\alpha +t}\sum_{k'\in U}(A(j'\cdot n_{3}+\beta +i)\otimes A(k'+n_{2}))T'(f, n_{1})(A\otimes A)\] (because 
$k'+n_{2}=j+(n_{2}-n_{1})+\alpha )$. 
 Recall that $2n_{1}<c\leq \beta \leq n_{2}-10tn_{1}, \alpha <2n_{1},$ $T'(f,n_{1})\subseteq \sum_{i'=1}^{2tn_{1}+t}A(n_{1})\otimes A(i').$  Observe that 
  for $i\leq 4tn_{1}+t, i'\leq 2tn_{1}+t$, the interval $[j'\cdot n_{3}+\beta+i, j'\cdot n_{3}+\beta+i+i']$ is disjoint with any interval $[kn_{2}-n_{1}, kn_{2}+n_{1}].$ It follows that $J_{\beta }\subseteq Q'(T'(f, n_{1}), n_{1}, n_{2})\subseteq Q_{3}.$


{\bf Case 4}   To show that for all $j'>0,$  $L_{j'}\subseteq Q_{3}$ for every homogeneous  $g\in \xi(S'(f, n_{1}, n_{2}))$  where $L_{j',g}=\sum_{j: j+r(g)\in U(n_{1}, n_{3})}(A(j'n_{3})\otimes A(j))g(A(n_{3})\otimes A(n_{3})).$
 (recall that $r(g)$ is the right degree of $g$). 

 By Lemma \ref{E},  $S'(f, n_{1}, n_{2})= G_{2}(f,n_{2}-n_{1})S=\bar {G}S+ \bar {G}'S,$ where $S=\bigcap_{i=0}^{2n_{1}}T'(f,i)T(f,n_{1})(A\otimes A).$ So we consider the following subcases 

{\bf Case 4a} To show that for all $j'>0,$ and any $h\in {\bar G}S$ we have  
 $L_{j',h}\subseteq M$ where $L_{j',h}=\sum_{j:j+r(h)\in U(n_{1}, n_{3})}(A(j'\cdot n_{3})\otimes A(j))\xi(h)(A(n_{3})\otimes A(n_{3}))\subseteq M.$ This follows from the first part of this proof.

{\bf Case 4b} To  prove that $L_{j',g}\subseteq Q_{3}$ for homogeneous  $g\in {\bar G}'S$ and  every $j'>0,$   where $L_{j',g}=\sum_{j: j+r(g)\in U(n_{1}, n_{3})}(A(j'n_{3})\otimes A(j))\xi(g).$
Since $j+r(g)\in U(n_{1}, n_{3})$, then  there is $k'$ is divisible by $n_{3}$ and such that $-n_{1}\leq j+r(g)-k'\leq n_{1}.$ Denote $\alpha =n_{1}+ j+r(g)-k',$ then  
$0\leq \alpha \leq 2n_{1}$.

  Since $g\in {\bar G}'S$ then $g\in {\bar G}_{\beta }T'(f,\alpha )T(f,n_{1})(A\otimes A),$ for some $c\leq \beta \leq n_{2}-10tn_{1}.$
 Therefore   $g=g_{1}g_{2}g_{3}g_{4}$ where $g_{1}\in {\bar G}_{\beta },$ $g_{2}\in T'(f, \alpha ),$ $g_{3}\in T'(f, n_{1}),$ $g_{4}\in  A\otimes A$ are homogeneous elements.
 It follows that 
 $L_{j', g}\subseteq \sum_{j: j+(r(g_{1})+r(g_{2})+r(g_{3})+r(g_{4})) \in U(n_{1}, n_{3})}((A(j'\cdot n_{3}+\beta +l(g_{2}))\otimes A(j+r(g_{4}))\xi(g_{3})(A\otimes A).$  Notice that $r(g_{1})+r(g_{2})=n_{2}-n_{1}+\alpha .$

 Recall that $g_{2}\in T'(f, \alpha )\subseteq \sum_{i=1}^{2t\alpha +t }A(i)\otimes A(\alpha ).$  
 Consequently,  $L_{j', g}\subseteq \sum_{i=1}^{2t\alpha +t}\sum_{j: j+n_{2}-n_{1} +\alpha +r(g_{3})+r(g_{4})) \in U(n_{1}, n_{3})}((A(j'\cdot n_{3}+\beta +i)\otimes A(j+r(g_{4}))\xi(g_{3})(A\otimes A).$ 
 Observe that $j+r(g_{3})+r(g_{4})=j+r(g)-(r(g_{2})+r(g_{1}))=j+r(g)-\alpha -n_{2}+n_{1}=k'-n_{2}.$ Denote $l=j+r(g_{4})$ and let $k'\in U$ mean that $k'$ is divisible by $n_{3}$. With this notation we have  
$L_{j',g}\subseteq \sum_{i=1}^{2t\alpha +t}\sum_{k',l:k'\in U,l+r(g_{3})=k'-n_{2}}((A(j'\cdot n_{3}+\beta +i)\otimes A(l))\xi(g_{3})(A\otimes A).$

 Recall that $2n_{1}<c<\beta \leq n_{2}-10tn_{1}$ and $\alpha <2n_{1},$ $g_{3}\subseteq \sum_{i'=1}^{2tn_{1}+t} A(i')\otimes A(n_{1}),$ $\alpha \leq 2n_{1}.$ Hence for $i\leq 2t\alpha +t, i'\leq 2tn_{1}+2t$ the interval $[\beta+i, \beta +i+i']$ is disjoint with any interval $[kn_{2}-n_{1}, kn_{2}+n_{1}].$
   Recall that  and that $n_{3}$ is divisible by $n_{2}, $ so  interval $[j'n_{3}+\beta+i, j'n_{3} +\beta +i+i']$ is disjoint with any interval $[kn_{2}-n_{1}, kn_{2}+n_{1}].$

Therefore, $L_{j',g'}\subseteq Q'(\xi(T'(f,n_{1})),n_{1}, n_{2})=Q_{3}$, as required.
 \end{proof} 

 We are now ready to prove Theorem \ref{S}.

{\bf Proof of Theorem \ref{S}.} If $K$ is a countable field then we can ennumerate elements of $A\otimes A$, as $f_{1}, f_{2}, $
 in a such way that: all elements of $A\otimes A$ are listed; some $f_{i}$ can be zero; $f_{i}\subseteq \sum_{i,j\leq i-1}A(i)\otimes A(j)$ for all $i\geq 2$, and moreover $f_{1}=f_{2}=0$. Let $N$ be the ideal generated by $f_{i}^{20ia_{i+1}}$ and $\xi (f_{i}^{20ia_{i+1}})$ for all $i\geq 2.$  By Theorem \ref{important2}, applied for $f=f_{n}$, $n_{1}=a_{n-1}$, $n_{2}=a_{n}$, $n_{3}=a_{n+1}$, $t=n-1$,  the two sided ideal generated by  $f_{n}^{100a_{n+2}}$ and 
 $\xi(f_{n}^{100a_{n+2}})$ is contained in $M_{n+1}$+ $Q_{n}+Q_{n-1}+Q'_{n-1}$, where linear spaces $F_{n}, F'_{n}$ and $G_{n}$ are defined as follows.
  With notation as in  Theorem \ref{important2}, for each $n\geq 1$ denote 
\[F_{n}=F(f_{n}, a_{n-1},a_{n}, a_{n+1})+F'(f_{n}, a_{n-1}, a_{n}, a_{n+1})+T(f_{n+1}, a_{n})+\xi(T(f_{n+1}, a_{n})),\]
\[ F'_{n}=T'(f_{n+1}, a_{n})+\xi(T'(f_{n+1}, a_{n})), G_{n}=G(f_{n}, a_{n-1}, a_{n}).\] We will show that   
  $F_{n}$, $F'_{n},$  $G_{n}$ satisfy assumptions of our theorem.  By Theorem \ref{important2}, 
$G_{n}\subseteq A(a_{n})\otimes A(a_{n})$ and 
$\dim _{K}G_{n}< a_{n}^{3} 2^{500na_{n-1}}<a_{n+2}^{100n}2^{500na_{n-1}}.$
  Observe that by Lemmas \ref{final} and  \ref{finalop}, we have 
$\dim _{K}F(f_{n}, a_{n-1},a_{n}, a_{n+1}),$ $ \dim _{K} F'(f_{n}, a_{n-1}, a_{n}, a_{n+1})\leq 2^{40n\cdot a_{n-1}}a_{n}$ and 
 By Lemmas \ref{introducingT}, \ref{introducingT'} and \ref{smaller}, we have 
 $\dim _{K} T(f_{n+1}, a_{n}),$ $\dim _{K} T'(f_{n+1}, a_{n})\leq  max( 2^{4n+3}a_{n}^{3}n^{3}, 2^{2n^{2}+1})$ 
 where $max (a,b)$ denotes the maximum of $a$ and $b$.
 Therefore $\dim _{K} F_{n}, \dim _{K} F'_{n}\leq a_{n+2}^{100n}2^{500n\cdot a_{n-1}}.$
 By Lemmas \ref{final} and \ref{finalop}, and since $  a_{n}>100na_{n-1}$ for every $i$,   
 we have $F(f_{n}, a_{n-1},a_{n}, a_{n+1}), F'(f_{n}, a_{n-1}, a_{n}, a_{n+1})\subseteq A(a_{n})\otimes  \sum_{j>{a_{n}\over 10n}}^{10na_{n}}A(j).$ By Lemmas \ref{introducingT}, \ref{introducingT'} and \ref{smaller}, we have 
$T(f_{n+1}, a_{n})\subseteq A(a_{n})\otimes \sum_{i\geq {a_{n}-n\over 2n}}^{2na_{n}+n}A(i)$ 
  and $T'(f_{n+1}, a_{n}) \subseteq \sum_{i\geq {a_{n}-n\over 2n}}^{2na_{n}+n}A(i)\otimes A(a_{n}).$  It follows that   $F_{n}\subseteq A(a_{n})\otimes  \sum_{j>{a_{n}\over 10n}}^{10na_{n}}A(j)$ and $F'_{n}\subseteq \sum_{j>{a_{n}\over 10n}}^{10na_{n}}A(j)\otimes A(n),$ since ${n<{a_{n}\over 2}}.$ 
Notice that we can put $G_{1}=0,$ $G_{2}=0,$ $F_{1}=0$ because $f_{1}, f_{2}=0.$

 Assume now that $K$ is an uncountable field, as $A$ has two generators , it follows that set $M\otimes M$ is countable, where $M\subseteq A$ is a set of monomials in $A$.  
 Therefore, the set of finite subsets of $M\otimes M$ is also countable. If $H$ is a subset of $M\otimes M$ by $KH$ we will mean the set of all linear combinations of elements from $H$.

 Let $H_{2}, H_{3}, \ldots $ be finite subsets of $M\otimes M$ such that for each $i\geq 2$, the cardinality of $H_{i}$ is less than $i$ and  
 $H_{i}\subseteq \sum_{i,j\leq i-1}A(i)\otimes A(j)$ and for all $i\geq 2$. Moreover some sets $H_{i}$ may be zero.
 Observe that we can list all finite subsets of $M\otimes M$ as sets $H_{2}, H_{3}, \ldots ,$ because we allow some sets $H_{i}$ to be zero. Moreover, we can assume that $H_{1}=H_{2}=H_{3}=\emptyset.$

For a given set $H$ and a natural number $n$ let $P_{n}(H) =\{f^{n}: f\in H\}.$ Then $K\cdot P_{n}(H)$ is the linear space spanned by elements from $P_{n}(H).$

 We now claim that for every $n\geq 2$ there is a set ${\bar H}_{n}\subseteq KH_{n}$ and such that the cardinality of ${\bar H}_{n}$ is less than  $(20na_{n+1})^{n}$ and moreover $K\cdot P_{20na_{n+1}}({\bar H}_{n})=K\cdot P_{20na_{n+1}}(KH_{n})$. To see that  let $f\in K\cdot H_{n}$, then $f=\sum_{i=1}^{n}\alpha _{i}m_{i}$ where $\alpha _{i}\in K$, $m_{i}\in H$. Let $w(j_{1}, j_{2}, \ldots ,j_{n})$ be the sum of all products of length $20na_{n+1}$ of elements from the set $H$ which are products of exactly $j_{i}$ elements $m_{i}$ for every $i\leq n$. Then $f^{20na_{n+1}}$ is a linear combination of elements $w(j_{1}, j_{2}, \ldots ,j_{n})$ for various $j_{1}, j_{2}, \ldots , j_{n},$ with $j_{1}+\ldots +j_{n}=20na_{n+1}.$ Because there are less than $(20na_{n+1})^{n}$ such choices of elements $i_{1}, i_{2}, \ldots ,i_{n}$, it follows that the dimension of the linear space $P_{20na_{n+1}}(KH_{n})$ is less than $(20na_{n+1})^{n}$. Therefore, there is a subset ${\bar H}_{n}\subseteq KH_{n}$ with cardinality less than  $(20na_{n+1})^{n}$ and such that $K\cdot P_{20na_{n+1}}({\bar H}_{n})=
K\cdot P_{20na_{n+1}}(KH_{n}),$ as required. Notice than then also $K\cdot \xi (P_{20na_{n+1}}({\bar H}_{n}))=
K\cdot \xi ( P_{20na_{n+1}}(K H_{n}),$

  We will now proceed in a similar way as in the case of countable fields at the beginning of this proof. We can 
 assume that  $H_{1}=H_{2}=H_{3}=\emptyset $ and put $F_{1}=0$, $G_{1}=0$, $G_{2}=0$. 
 Observe that  by Theorem \ref{important2} applied for  $f\in {\bar H}_{n}$, $n_{1}=a_{n-1}$, $n_{2}=a_{n}$, $n_{3}=a_{n+1}$, $t=n-1$,  the two sided ideal generated by  elements from  the set $\{f^{100a_{n+2}}:f\in {\bar H}_{n}\}$ and 
  from the set $\{\xi(f)^{100a_{n+2}}: f\in {\bar H}_{n}\}$ is contained in $M_{n+1}$+ $Q_{n}+Q_{n-1}+Q'_{n-1}$, where linear spaces  $F_{n}, F'_{n}$ and $G_{n}$ are defined as follows.
   $F_{1}=0$, $G_{1}=0$, $G_{2}=0$, and $F_{n}=\{F(f, a_{n-1},a_{n}, a_{n+1})+F'(f, a_{n-1}, a_{n}, a_{n+1})+T(g, a_{n})+\xi(T(g, a_{n})):f\in {\bar H}_{n}, g\in {\bar H}_{n+1}\}.$ Moreover, 
\[ F'_{n}=\{T'(g,a_{n})+\xi(T'(g, a_{n}): g\in {\bar H}_{n+1},\}, \]
\[G_{n}=\{G(f, a_{n-1}, a_{n}): f\in {\bar H}_{n}\}.\]
 We will show that   
  $F_{n}$, $F'_{n},$  $G_{n}$ satisfy the assumptions of our theorem.  By Theorem \ref{important2},  
$G_{n}\subseteq A(a_{n})\otimes A(a_{n})$ and for every $g\in {\bar H}_{n+1},$ 
$\dim _{K} G(g, a_{n-1}, a_{n})<   a_{n}^{3} 2^{500na_{n-1}}.$
 Hence, $\dim _{K} G_{n}<  (a_{n}^{3} 2^{500na_{n-1}})\cdot (20(n+1)a_{n+2}^{n+1})<a_{n+2}^{100n}2^{500na_{n-1}}$ 
 (because  $\dim _{K}{ \bar H}_{n+1}\leq 20(n+1)a_{n+2}^{n+1}$).
  By Lemmas \ref{final}, \ref{smaller} and \ref{finalop}, we have for each $f\in {\bar H}_{n}$, 
$\dim _{K}F(f, a_{n-1},a_{n}, a_{n+1}), \dim _{K} F'(f, a_{n-1}, a_{n}, a_{n+1})\leq 2^{40n\cdot a_{n-1}}a_{n}$ and 
 By Lemmas \ref{introducingT}, \ref{introducingT'} and \ref{smaller}, we have for every $g\in {\bar H}_{n+1},$ 
 $\dim _{K} T(g, a_{n}),$ $\dim _{K} T'(g, a_{n})\leq  max( 2^{4n+3}a_{n}^{3}n^{3}, 2^{2n^{2}+1}).$ Recall that  
$\dim _{K} {\bar H}_{n},$ $\dim _{K}{ \bar H}_{n+1}\leq 20(n+1)a_{n+2}^{n+1}$,  it follows that  
 $\dim _{K} F_{n}, \dim _{K} F'_{n}\leq a_{n+2}^{100n}2^{500n\cdot a_{n-1}}.$
 Similarly as in the countable field case,   $F_{n}\subseteq A(a_{n})\otimes  \sum_{j>{a_{n}\over 10n}}^{10na_{n}}A(j)$ and $F'_{n}\subseteq \sum_{j>{a_{n}\over 10n}}^{10na_{n}}A(j)\otimes A(n).$
 Therefore, sets $F_{n}, F'_{n}, G_{n}$ satisfy thesis of our theorem.\\

Let $N$ be the ideal generated in $A\otimes A$ by sets $P_{20na_{n+1}}({\bar H}_{n})$, for $n=2, 3, $. 
  By the above reasoning, $N\subseteq \sum_{i=1}^{\infty }M_{i}+Q_{i}.$
Moreover, if $f\in A\otimes A$ then $A\in H_{i}$ for some $i$. Then $f^{20ia_{i+1}}\in  P_{20ia_{i+1}}(KH_{i})\subseteq K\cdot P_{20ia_{i+1}}({\bar H}_{i})\subseteq N.$ \\
 Similarly, $\xi (f^{20ia _{i+1}})\in \xi ( P_{20ia_{i+1}}(KH_{i}))\subseteq K\cdot \xi( P_{20ia_{i+1}}({\bar H}_{i}))\subseteq N.$

$ $\\
{\bf Acknowledgements} This research was supported by ERC Advanced grant Coimbra 320974. The author would like to thank  Adam Chapman, Natalia Iyudu, Jerzy Matczuk, Uzi Vishne and Micha{\l} Ziembowski   for their comments on the paper's introduction. The author is also grateful to Pace Nielsen, as  a  simple exposition from his paper \cite{n} was used by her to simplify the
proof of Theorem \ref{S}.  The author is especially grateful to  Andre  Leroy and the organisers of  ``Nord Pas de Calais/Belgium
congress of Mathematics'' for their hospitality during her stay in Valenciennes, Mons and Lens, in October and  November 2013 where the first  part of this paper was prepared.

\end{document}